\documentclass[11pt]{amsart} \usepackage{latexsym, amssymb, stmaryrd, times, verbatim, amsmath}
\usepackage[T1]{fontenc}
\usepackage[multiple]{footmisc}

\newtheorem{thm}{Theorem}[section]
\newtheorem{lemma}[thm]{Lemma}
\newtheorem{prop}[thm]{Proposition}
\newtheorem{cor}[thm]{Corollary}
\newtheorem{question}[thm]{Question}
\newtheorem{fact}[thm]{Fact}
\newtheorem{conj}[thm]{Conjecture}

\theoremstyle{definition}
\newtheorem{df}[thm]{Definition}

\newtheorem{example}[thm]{Example}

\newtheorem{rmk}[thm]{Remark}

\theoremstyle{remark}




\makeatletter

\def\dotminussym#1#2{%
  \setbox0=\hbox{$\m@th#1-$}%
  \kern.5\wd0%
  \hbox to 0pt{\hss\hbox{$\m@th#1-$}\hss}%
  \raise.6\ht0\hbox to 0pt{\hss$\m@th#1.$\hss}%
  \kern.5\wd0}
\newcommand{\dotminus}{\mathbin{\mathpalette\dotminussym{}}}

\renewcommand{\r}{\mathbb{R}}
\newcommand{\Z}{\mathbb{Z}}


\newcommand{\curly}[1]{\mathcal{#1}}

\newcommand{\B}{\curly{B}}

\newcommand{\n}{\mathbb{N}}

\renewcommand{\to}{\rightarrow}

\def \Diag{\operatorname{Diag}}

\def \<{\langle}
\def \>{\rangle}

\def \*Z {{{^*}\Z}}
\def \((  {(\!(}
\def \)) {)\!)}

\def \O{\operatorname{O}}

\def \tp{\operatorname{tp}}

\numberwithin{equation}{section}

\def \Th{\operatorname{Th}}
\def \R{\mathcal R}
\def \u{\mathcal U}

\def \O{\mathcal O}
\def \nuc{\operatorname{nuc}}

\def \P{\mathbb{P}}
\def \etp{\operatorname{etp}}

\newcommand{\cstar}{$\mathrm{C}^*$}


\allowdisplaybreaks[2]

\def \Q{\mathcal{Q}}

\def \LLP{\operatorname{LLP}}
\def \NG{\mathbb{NG}}
\def \GS{\mathbb{GS}}
\def \appdiag{\operatorname{AppDiag}}
\title{Enforceable operator algebras}
\author{Isaac Goldbring}
\thanks{I. Goldbring was partially supported by NSF CAREER grant DMS-1708802.}

\address{Department of Mathematics\\
University of California, Irvine\\
Irvine, CA 92697\\
 USA}
\email{isaac@math.uci.edu}


\begin{document}

\begin{abstract}
We adapt the classical notion of building models by games to the setting of continuous model theory.  As an application, we study to what extent canonical operator algebras are enforceable models.  For example, we show that the hyperfinite II$_1$ factor is an enforceable II$_1$ factor if and only if the Connes Embedding Problem has a positive solution.  We also show that the set of continuous functions on the pseudoarc is an enforceable model of the theory of unital, projectionless, abelian \cstar-algebras and use this to show that it is the prime model of its theory.
\end{abstract}
\maketitle

\tableofcontents

%
%
%

\section{Introduction}

The technique of model-theoretic forcing and, more specifically, the approach via games, is a well-developed part of classical model theory and has found applications in algebraic areas such as in the model theory of groups.  (Throughout this article, our main reference for this topic is the wonderful book \cite{hodges}.)  While model-theoretic forcing has been transported to the setting of continuous logic (see \cite{BI}, \cite{munster}, \cite{farahmagidor}) and has found nice applications to functional analysis and operator algebras, the approach via games has yet to make its continuous appearance.  In this paper, we present the approach of building models by games in the continuous setting and use it to prove some new results in the model theory of operator algebras.  In addition to these aforementioned applications, we believe that the approach to model-theoretic forcing via games is much easier to understand for the non-logician than the other presentations in the literature.  Moreover, the game approach allows one to consider an important notion not readily apparent in the other approaches, namely that of an \emph{enforceable structure}. 

Let us briefly describe the game here.  To be concrete, let us choose a particular setting, say the setting of tracial von Neumann algebras.  Let us fix a set $C$ of distinct symbols that are to represent generators of a tracial von Neumann algebra that two players (traditionally named $\forall$ and $\exists$) are going to build together (albeit adversarially).  The two players take turns playing finite sets of expressions of the form $|\|p(c)\|_2-r|<\epsilon$, where $c$ is a tuple of variables, $p(c)$ is a $*$-polynomial, and each player's move is required to extend the previous player's move.  These sets are called (open) \emph{conditions}.  Moreover, these conditions are required to be \emph{satisfiable}, meaning that there should be some tracial von Neumann algebra $M$ and some tuple $a$ from $M$ such that $|\|p(a)\|_2-r|<\epsilon$ for each such expression in the condition.

We play this game for $\omega$ many steps.\footnote{Perhaps to the disappointment of the operator algebra audience, in this article, $\omega$ denotes the first infinite ordinal, not an ultrafilter.}  At the end of this game, we have enumerated some countable, satisfiable set of expressions.  Provided that the players behave, they can ensure that the final set of expressions yields complete information about all $*$-polynomials over the variables $C$ (that is, for each $*$-polynomial $p(c)$, there should be a unique $r$ such that the play of the game implies that $\|p(c)\|_2=r$) and that this data describes a countable, dense $*$-subalgebra of a unique tracial von Neumann algebra, which is often called the \emph{compiled structure}.   

The question then becomes:  what kinds of properties can we force the compiled structure to have?  More precisely, given a property $P$, is there a strategy for $\exists$ so that, regardless of player $\forall$'s moves, if $\exists$ follows the strategy, then the compiled structure will have that property?  If this is the case, we call the property $P$ an \emph{enforceable} property of tracial von Neumann algebras.  It is natural to ask:  are there any interesting enforceable properties of tracial von Neumann algebras?  We will later see that it is enforceable that the compiled structure is a McDuff II$_1$ factor.  (Recall that a II$_1$ factor is McDuff if it tensorially absorbs the hyperfinite II$_1$ factor $\R$.)

Of central importance in this paper is a seemingly extraordinary case:  Suppose that the property $P$ is the property of being isomorphic to a particular separable II$_1$ factor $\mathcal{E}$.  If this property is enforceable, we say that $\mathcal{E}$ is the \emph{enforceable} II$_1$ factor.  Clearly, there can be at most one enforceable II$_1$ factor.  But is there one?  While this may seem like an extreme possibility that never happens, there are many situations in classical logic where there is an enforceable structure.  For example, if one plays the discrete version of the above game with fields of a fixed characteristic, then the algebraic closure of the prime field is the enforceable structure. 

Again, we ask:  is there an enforceable II$_1$ factor?  The answer is connected to arguably the most famous open problem in the theory of II$_1$ factors, namely the \emph{Connes Embedding Problem}.  Recall that the Connes Embedding Problem asks whether or not every II$_1$ factor embeds into an ultrapower of $\R$.  One of the main results of the current paper is that the Connes Embedding Problem has a positive solution if and only if $\R$ is the enforceable II$_1$ factor.  We will also show that if one restricts the above game to only playing conditions that are satisfiable in tracial von Neumann algebras that do embed into an ultrapower of $\R$, then $\R$ is indeed the enforceable structure for this game.  We also prove analogous results for various games concerned with \cstar-algebras and operator spaces and systems.

The original motivation for this work was model-theoretic questions around the \emph{pseudoarc} $\P$.  Using the game-theoretic machinery, we will prove that $C(\P)$ is the prime model of its theory, a result which had yet to be proven thus far.

Let us conclude by outlining the contents of this paper.  In Section 2, we carefully describe the aforementioned game in the setting of an arbitrary continuous language and describe how the associated notion of forcing connects with the presentations of forcing that have already appeared in the literature.  In Section 3, we describe the important notion of a \emph{finite-generic} structure.  These are structures for which forcing and truth coincide.  Many of our applications rely on foundational properties of finite-generic structures and so a careful presentation of these results is needed.  In Section 4, we describe the aforementioned application to the model theory of the pseudoarc.  In Section 5, we discuss the already described connection between enforceable models and embedding problems in operator algebras.  In the final section, we prove the so-called dichotomy theorem, which shows that, for certain kinds of theories (including many of those appearing in operator algebras), either there is an enforceable structure or else, for any enforceable property $P$, there are continuum many nonisomorphic separable structures with property $P$.  We speculate on how this theorem might provide a new approach to CEP and other embedding problems in operator algebras.

We would like to thank Ilijas Farah and Bradd Hart for useful conversations in connection with this work.

\subsection{Preliminaries, notations, and conventions}
We will assume that the reader is familiar with the basics of continuous logic.  Standard references are \cite{bbhu} and \cite{munster}, the latter of which stresses applications to operator algebras.  In this subsection, we will just collect a few preliminary notions that are of central importance in this paper and deserve to be recalled.  We also take the opportunity to set up some notation.

Fix a continuous signature $L$, which, for simplicity, we assume is $1$-sorted and of diameter bounded by $1$.  For any $n\geq 1$, there is a natural seminorm on the space of all $L$-formulae with free variables amongst the variables $x=x_1,\ldots,x_n$, namely
$$\|\varphi(x)\|:=\sup\{|\varphi(a)| \ : \ A \text{ is an $L$-structure and }a\in A^n\}.$$

By a \emph{restricted} $L$-formula, we mean an $L$-formula constructed using only the unary connectives $\frac{1}{2}$ and $\neg$ (here, $\neg r:=1- r$) and the binary connective $\dotplus$ (truncated addition).  The family of restricted $L$-formulae is dense (with respect to the seminorm from the previous paragraph) in the space of all $L$-formulae.

The infinitary logic $L_{\omega_1,\omega}$ allows us to perform, in addition to the usual formation rules for describing formulae, two new operations, namely countable supremum $\bigvee$ and countable infimum $\bigwedge$.  However, in order to be able to form $\bigvee_m\varphi_m$ or $\bigwedge_m \varphi_m$, two things are required:  (1) all $\varphi_m$ have free variables among some fixed set $x=x_1,\ldots,x_n$ of variables; and (2)  the infimum of the moduli of uniform continuity of each $\varphi_m$ is itself a modulus of uniform continuity.  (See, for example, \cite[Definition 1.1]{BI} for more details.)

Throughout, $\u$ denotes an arbitrary nonprincipal ultrafilter on $\n$.  For an $L$-structure $A$, $A^\u$ denotes the ultrapower of $A$ with respect to $\u$.  While the isomorphism type of this structure often depends on $\u$, the use of such an ultrapower will not depend on $\u$.  For example, if $T$ is an $L$-theory, we say that a separable model $A$ of $T$ is \emph{locally universal for $T$} if every separable model of $T$ embeds into $A^\u$.  It is a standard fact that this notion does not depend on $\u$.

Recall that if $\theta:A\to B$ is an embedding between $L$-structures, then $\theta$ is said to be \emph{existential} if, for any quantifier-free $L$-formula $\phi(x,y)$  and any tuple $a$ from $A$, we have
$$\inf_{b\in A}\phi(a,b)=\inf_{b\in B}\phi(\theta(a),b).$$  If $A$ is a substructure of $B$ and the inclusion map is existential, we say that $A$ is \emph{existentially closed in} $B$.  An equivalent semantic reformulation of the latter property reads:  $A$ is existentially closed in $B$ if and only if there is an embedding of $B$ into $A^\u$ which restricts to the diagonal embedding of $A$ into $A^\u$.  (If $A$ and $B$ are nonseparable, then $\u$ may need to live on a larger index set.)  If $T$ is an $L$-theory and $A$ is a model of $T$, we say that $A$ is an existentially closed (or simply e.c.) model of $T$ if $A$ is existentially closed in all extensions that are models of $T$.  Then $A\models T$ is e.c. for $T$ if and only if $A$ is e.c. for $T_\forall$, where $T_\forall$ is the collection of closed conditions $\sigma=0$ such that $\sigma$ is universal and $T\models \sigma=0.$  Also, if $T$ has the \emph{joint embedding property} (JEP), namely that every pair of models of $T$ can be embedded into a common model of $T$, then existentially closed models of $T$ are locally universal for $T$.

A particularly important case is the case that $T$ is an $\forall\exists$-axiomatizable theory.    Then every (separable) model of $T$ embeds into a (separable) e.c. model of $T$. 

\section{Games and forcing}

\subsection{Introducing the game}

Until further notice, $L$ is a fixed countable continuous signature and $T$ is an $L$-theory.  For the sake of simplicity, we assume that our language is 1-sorted, bounded, and that each predicate (including the metric) takes values in $[0,1]$.  (Note that this is certainly not the case for the languages and theories applicable in operator algebras, but we trust that the reader should have no trouble convincing themselves that everything we do here can be adapted to the more general setting.)

We let $C$ be a countable set of new constant symbols and set $L(C):=L\cup C$.  Following the convention from \cite{hodges}, we denote $L(C)$-structures by $A^+$, $B^+$, etc... and the corresponding $L$-reducts by $A$, $B$, etc...  We call an $L(C)$-structure \emph{canonical} if the interpretations of the symbols from $C$ are dense; if, moreover, every open ball contains infinitely  many such interpretations, we call the structure \emph{extra canonical}.

A \emph{condition} is a finite set $p$ of expressions of the form $\varphi<r$, where $\varphi$ is a quantifier-free \emph{restricted} $L(C)$-sentence, such that $T\cup p$ is satisfiable.\footnote{We should probably call these conditions \emph{open conditions} to distinguish them from the conditions $\varphi=0$ used, for example, in \cite{bbhu}.  However, we hope that this poses no confusion.}

As mentioned in the introduction, the game involves two players, $\forall$ and $\exists$.  Players $\forall$ and $\exists$ take turns playing conditions extending the previous players move.  Thus, $\forall$ starts by playing the condition $p_0$, whence $\exists$ follows up by playing the condition $p_1\supseteq p_0$, and then $\forall$ follows that play with some condition $p_2\supseteq p_1$, etc... After $\omega$ many steps, the two players have together played a chain $p_0\subseteq p_1\subseteq p_2\subseteq\cdots$ of conditions whose union we will denote by $\bar p$.  

We call the above play \emph{definitive} if, for every atomic $L(C)$-sentence $\varphi$, there is a unique $r\in [0,1]$ such that $T\cup \bar p \models \varphi=r$.  In this case, $\bar p$ describes an $L(C)$-prestructure $A_0^+(\bar p)$ whose completion will be denoted by $A^+(\bar p)$ and will be referred to as the \emph{compiled structure}.\footnote{To wit:  the underlying universe of $A^+_0$ is the term algebra on the set of constants from $C$ and the symbols are interpreted in the obvious way.}  The reduct of $A^+(\bar p)$ to $L$ will be denoted by $A(\bar p)$.  If $\bar p$ is clear from context, we will denote $A^+(\bar p)$ and $A(\bar p)$ simply by $A^+$ and $A$ respectively.

Note that, regardless of player $\forall$'s moves, player $\exists$ can always ensure that the play of the game is definitive.

\begin{df}
Let $P$ be a property of $L(C)$-structures.  The game $G(P)$ is the game whose moves are as above and such that Player $\exists$ wins $G(P)$ if and only if $\bar p$ is definitive and $A^+(\bar p)$ has property $P$.  We say that $P$ is \emph{enforceable} if Player $\exists$ has a winning strategy in $G(P)$. 
\end{df}    

By the remark preceding this definition, the vacuously true property is enforceable.  We leave the proof of the following lemma to the reader.

\begin{lemma}
The property ``the compiled structure is extra canonical'' is enforceable.
\end{lemma}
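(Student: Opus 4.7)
The plan is to equip Player $\exists$ with a simple bookkeeping strategy in which each move performs one of countably many scheduled tasks. Since $L$ and $C$ are countable, there are only countably many closed $L(C)$-terms, so enumerate the triples $(t_k, \varepsilon_k, n_k)_{k \in \mathbb{N}}$ in which $t_k$ ranges over closed $L(C)$-terms, $\varepsilon_k$ over positive rationals, and $n_k$ over $\mathbb{N}$, with each triple appearing exactly once. At $\exists$'s $k$-th move, extending $\forall$'s condition $p_{2k}$, pick a constant $c_k' \in C$ that has not yet appeared in $p_{2k}$ nor been used in any earlier $\exists$-move, and put the expression $d(t_k, c_k') < \varepsilon_k$ into $p_{2k+1}$. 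To guarantee in addition that $\bar p$ is definitive, interleave this with the standard definitiveness bookkeeping: enumerate the atomic $L(C)$-sentences $\varphi_j$ and, on the $k$-th move, also pin the value of some $\varphi_j$ to an interval of length less than $1/k$ by a pair of restricted expressions.

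The point that makes every such addition a legal move is a trivial satisfiability observation: given any model $M \models T \cup p_{2k}$ with interpretations of the constants mentioned in $p_{2k}$, interpret the fresh $c_k'$ as $t_k^M$. Then $d(t_k, c_k')^M = 0 < \varepsilon_k$, so $T \cup p_{2k+1}$ is satisfiable. The analogous remark for definitiveness is that, since $p_{2k}$ is satisfiable, $\varphi_j$ takes some value $r \in [0,1]$ in some model of $T \cup p_{2k}$, whence $|\varphi_j - r| < 1/k$ may be added.

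At the end of the play, $\bar p$ is definitive, so $A^+ := A^+(\bar p)$ is well defined. Closed $L(C)$-terms are automatically dense in $A^+$ (the term prestructure is dense in its completion by construction). To verify extra canonicity, fix $a \in A^+$ and $\delta > 0$, pick a closed term $t$ with $d(a, t^{A^+}) < \delta/2$, and then pick a positive rational $\varepsilon < \delta/2$. For each $n \in \mathbb{N}$, the task $(t, \varepsilon, n)$ forced a \emph{distinct} constant $c' \in C$ into $\bar p$ with $d(t, c') < \varepsilon$, and the triangle inequality gives $d(a, (c')^{A^+}) < \delta$. Hence the open ball of radius $\delta$ around $a$ contains infinitely many interpretations of constants from $C$, which delivers both density and the ``infinitely many'' clause at once.

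The only mildly delicate point is the interleaving of the definitiveness bookkeeping with the task list above, but since $\exists$ is obliged to secure definitiveness in essentially every such game (as noted in the remark preceding the lemma), I regard this as routine scaffolding rather than the real content; the substantive input is the one-line satisfiability argument that lets $\exists$ summon an arbitrarily close fresh constant on demand.
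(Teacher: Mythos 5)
Your argument is correct and is exactly the routine bookkeeping proof the paper intends (it leaves this lemma to the reader): $\exists$ schedules countably many tasks, each adding $d(t_k,c_k')<\varepsilon_k$ for a fresh constant $c_k'$, which is a legal move because $c_k'$ can be reinterpreted as $t_k^M$ in any model of $T\cup p_{2k}$, all interleaved with the standard definitiveness bookkeeping already noted before the lemma. Two small points would make it airtight: require $c_k'$ also to be distinct from the finitely many constants occurring in $t_k$ (otherwise the reinterpretation is circular and $d(t_k,c_k')<\varepsilon_k$ need not be satisfiable), and note that at the end the play only pins $d(t_k,c_k')\le\varepsilon_k$ in the compiled structure, which together with $\varepsilon_k<\delta/2$ still gives $d(a,(c_k')^{A^+})<\delta$ as you need.
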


While some properties may not be enforceable, they may become enforceable if the game has reached a certain point.

\begin{df}
Let $P$ be a property of $L(C)$-structures and let $p$ be a condition.  We say that $p$ \emph{forces} $P$ if, for any position $(p_0,\ldots,p_k)$  of the game $G(P)$, if $p\subseteq p_k$, then the position is winning for $\exists$.
\end{df}

The proof of the following lemma in the classical setting can be found in \cite{hodges}; the corresponding facts in the continuous setting provide no added difficulty.

\begin{lemma}\label{properties}

\

\begin{enumerate}
\item $p$ forces $P$ if and only if whenever $\forall$ plays $p_0\supseteq p$, then $p_0$ is a winning position for $\exists$.
\item $P$ is enforceable if and only if every condition forces $P$.
\item If $p$ forces $P$ and $q\supseteq p$, then $q$ forces $P$.
\item (Conjunction Lemma)  If $p$ forces $P_i$ for each $i<\omega$, then $p$ forces the conjunction of the $P_i$'s.
\end{enumerate}
\end{lemma}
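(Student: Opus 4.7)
The overall plan is to exploit the fact that the winning condition of $G(P)$ is a property of the final union $\bar p = \bigcup_k p_k$; consequently, whether a position is winning for $\exists$ depends only on the last condition $p_k$ and the parity of $k$, not on the full history. This observation makes parts (1)--(3) routine, while (4) requires a diagonal bookkeeping.

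For (1), the forward direction is immediate upon setting $k = 0$ in the definition. Conversely, assume every $\forall$-opening-move $p_0 \supseteq p$ yields a winning position for $\exists$, and let $(p_0,\ldots,p_k)$ be any position with $p \subseteq p_k$. If $\forall$ is next to move, any play $p_{k+1} \supseteq p_k \supseteq p$ yields a winning position by hypothesis, and the extended position $(p_0,\ldots,p_{k+1})$ is likewise winning since the remainder of the game depends only on $p_{k+1}$; if $\exists$ is next, she plays $p_{k+1} = p_k$ and reduces to the preceding case. Part (2) then follows from (1): a winning strategy for $\exists$ from the empty position makes every $(p_0)$ winning, so every $p$ forces $P$; conversely, $\emptyset$ is itself a condition (since $T$ is satisfiable), and its forcing $P$ is just enforceability of $P$. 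Part (3) is immediate, since any position whose last condition contains $q$ automatically contains $p$.

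For (4), the main challenge is weaving countably many winning strategies into one. For each $i$, the hypothesis together with (3) furnishes a winning strategy $\sigma_i$ for $\exists$ in $G(P_i)$ available from every position containing $p$. Fix a surjection $\pi: \omega \to \omega$ hitting every value infinitely often. In $G(\bigwedge_i P_i)$, starting from a position containing $p$, $\exists$'s $k$-th move is the response $\sigma_{\pi(k)}$ prescribes for the current condition. The key verification is that, for each fixed $i$, the chain of conditions produced at and just before the $\exists$-turns with $\pi(k) = i$ is cofinal in the combined chain and constitutes a legitimate $\sigma_i$-consistent play of $G(P_i)$: the condition just before such a turn counts as a valid $\forall$-move in the virtual $G(P_i)$-run (it extends all earlier virtual moves since the combined chain is increasing), and $\exists$'s ensuing move is exactly $\sigma_i$'s response. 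Hence the final union of this embedded run, which equals $\bar p$ by cofinality, satisfies $P_i$, and the same run secures definitiveness of $\bar p$. The main subtle step is this reinterpretation, whose soundness rests on the fact that each $G(P_i)$'s winning condition depends only on the final union and not on which player contributed which intermediate condition.
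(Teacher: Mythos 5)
Your proof is correct, and it is essentially the standard argument: the paper omits the proof and defers to Hodges, whose classical proof proceeds exactly as you do — reducing to the observation that winning depends only on the current condition (and whose turn it is), and proving the Conjunction Lemma by interleaving the strategies $\sigma_i$ along a schedule hitting each $i$ infinitely often, so that each embedded virtual run of $G(P_i)$ is cofinal and legitimate. Your handling of definitiveness and of the cofinality of each virtual run is exactly the point that makes the interleaving work.
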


\begin{prop}\label{enforceuniversal}
It is enforceable that the compiled structure be a model of $T_\forall$.
\end{prop}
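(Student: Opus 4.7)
The plan is to reduce, via the Conjunction Lemma and Lemma \ref{properties}(2), to the claim that for every restricted quantifier-free $L$-formula $\varphi(\bar x)$ with $T\models \sup_{\bar x}\varphi(\bar x)=0$, every finite tuple $\bar c$ of constants from $C$ of the appropriate arity, and every rational $\epsilon>0$, the property $P_{\varphi,\bar c,\epsilon}$ asserting ``$\varphi(\bar c)^{A^+}\leq \epsilon$'' is enforceable.  Since there are only countably many such triples, combining their conjunction with the already-established enforceability of extra canonicity (so that $C$ is dense in $A$) will yield a compiled structure whose $L$-reduct models $T_\forall$.

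The main step is enforceability of each individual $P_{\varphi,\bar c,\epsilon}$.  By Lemma \ref{properties}(1), it suffices to show that the empty condition forces $P_{\varphi,\bar c,\epsilon}$, i.e.\ that whenever $\forall$ plays some condition $q$, player $\exists$ has a winning reply.  Player $\exists$ simply plays $q':=q\cup\{\varphi(\bar c)<\epsilon\}$.  This is still a condition:  any $M\models T\cup q$, viewed as an $L(C)$-structure, satisfies $\sup_{\bar x}\varphi(\bar x)=0$, hence $\varphi(\bar c^M)=0<\epsilon$, so $M\models q'$.  Every condition played in the remainder of the game extends $q'$ and therefore contains $\varphi(\bar c)<\epsilon$; by the construction of the compiled structure this forces $\varphi(\bar c)^{A^+}\leq\epsilon$, independently of subsequent play.

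To finish, enforce extra canonicity in parallel so that the interpretations of $C$ are dense in $A^+$, hence in the $L$-reduct $A$.  For any restricted universal $\sigma=\sup_{\bar x}\varphi(\bar x)\in T_\forall$, the conjunction of the $P_{\varphi,\bar c,\epsilon}$'s (over all $\bar c$ and $\epsilon$) plus density of $C$ and continuity of $\varphi$ give $\sup_{\bar x\in A}\varphi(\bar x)=\sup_{\bar c}\varphi(\bar c)\leq\epsilon$ for every $\epsilon>0$, hence $A\models\sigma=0$.  The extension to arbitrary (not necessarily restricted) $\sigma=\sup_{\bar x}\varphi(\bar x)\in T_\forall$ is the only point that deserves mild care:  given $\delta>0$, pick a restricted quantifier-free $\varphi'$ with $\|\varphi-\varphi'\|<\delta$; then $\varphi'\dotminus\delta\leq\varphi$ uniformly, so $\sup_{\bar x}(\varphi'(\bar x)\dotminus\delta)\in T_\forall$ is restricted, and the previous step yields $\varphi'(\bar a)\leq\delta$ on $A$, hence $\varphi(\bar a)\leq 2\delta$ on $A$.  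Letting $\delta\to 0$ gives $A\models\sigma=0$.

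The anticipated obstacle is essentially bookkeeping:  isolating a countable family of enforceable sub-properties sufficient to capture $T_\forall$, and handling the density passage from restricted to arbitrary universal sentences.  The game-theoretic content of the argument—adjoining $\varphi(\bar c)<\epsilon$ to any given condition—is immediate once one observes that $T\models\sup_{\bar x}\varphi(\bar x)=0$ means models of $T\cup q$ automatically witness this inequality.
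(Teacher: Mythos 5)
Your proof is correct and follows essentially the same route as the paper: reduce via the Conjunction Lemma and enforceability of extra canonicity to instances at tuples of constants, and have $\exists$ append the inequality $\varphi(\bar c)<\epsilon$ to $\forall$'s condition, which remains satisfiable because every model of $T\cup q$ already satisfies the universal sentence. The only (cosmetic) difference is where the restricted-formula approximation is performed — the paper replaces the quantifier-free matrix by a restricted $\psi$ inside the game move, while you enumerate restricted universal consequences and pass to general universal sentences by density at the end; both are the same idea.
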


\begin{proof}
Suppose that $\sigma=0$ belongs to $T_\forall$ with $\sigma=\sup_x \varphi(x)$, $\varphi(x)$ quantifier-free.  Let $c$ be a tuple of distinct constants and $n\geq 1$.  Since being an extra canonical structure is enforceable, by the conjunction lemma, it is enough to enforce that $\varphi(c)<1/n$.  Suppose that player $\forall$ plays $p_0$.  Let $A^+$ be a model of $T\cup p_0$; since $\sigma^{A^+}=0$, we know that $\varphi^{A^+}(c^{A^+})=0$.  Let $\psi(x)$ be a restricted quantifier-free formula such that $\|\varphi-\psi\|<\frac{1}{2n}$.  It follows that $p_1:=p_0\cup \{\psi(c)<\frac{1}{2n}\}$ is a condition.  If player $\exists$ plays $p_1$, then the compiled structure will be as desired.
\end{proof}

Call a sentence $\sigma$ of $L_{\omega_1,\omega}$ a \emph{$\sup\bigvee\inf$-sentence} if 
$$\sigma=\sup_x\inf_n \varphi_n(x),$$ where $x$ is a (finite) tuple of variables and each $\varphi_n$ is existential.  We call a property $P$ of $L$-structures a \emph{$\sup\bigvee\inf$-property} if there are $\sup\bigvee\inf$-sentences $\sigma_m$ such that an $L$-structure $A$ has property $P$ precisely when $\sigma_m^A=0$ for all $m$.  We can often enforce $\sup\bigvee\inf$-properties.

\begin{prop}\label{enforcesbvi}
Suppose that $P$ is a $\sup\bigvee\inf$-property.  Further suppose that there is a locally universal model of $T$ with property $P$.  Then $P$ is enforceable.
\end{prop}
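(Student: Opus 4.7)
The plan is to adapt the strategy of Proposition \ref{enforceuniversal}, using the locally universal model $M$ of $T$ with property $P$ as a source of existential witnesses. Write the defining sentences of $P$ as $\sigma_m = \sup_{x_m}\inf_n \varphi_{m,n}(x_m)$ with $\varphi_{m,n}(x_m) = \inf_{y_{m,n}}\psi_{m,n}(x_m,y_{m,n})$ and each $\psi_{m,n}$ quantifier-free. Since it is enforceable that the compiled structure be extra canonical, and each $\inf_n \varphi_{m,n}$ is uniformly continuous (by the $L_{\omega_1,\omega}$ conventions on moduli), under extra canonicity the statement $\sigma_m^A=0$ is equivalent to $\inf_n \varphi_{m,n}(\bar c)^A=0$ for every tuple $\bar c$ of constants from $C$ of the appropriate length. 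Hence by the Conjunction Lemma, it suffices to show that, for each triple $(m,\bar c,k)$ with $k\geq 1$, the property
\[P_{m,\bar c,k} : \inf_n \varphi_{m,n}(\bar c)^A < \tfrac{1}{k}\]
is enforceable.

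Fix such a triple, and suppose $\forall$ has just played $p_0$. Choose any separable model $A^+$ of $T\cup p_0$; by local universality $A$ embeds into $M^\u$, so the interpretations in $A^+$ of the finitely many constants appearing in $p_0$ and in $\bar c$ can be viewed as elements of $M^\u$ and represented by sequences from $M$. A standard \L o\'s-type argument then yields a single index $i$ for which the corresponding tuple from $M$ realizes $p_0$; denote the resulting interpretation of $\bar c$ by $\bar c^*$. Since $\sigma_m^M=0$, we have $\inf_n \varphi_{m,n}(\bar c^*)^M=0$, so we may pick $n$ and a tuple $d^*$ from $M$ with $\psi_{m,n}(\bar c^*,d^*)^M < 1/(4k)$. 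Fix a restricted quantifier-free formula $\psi'$ with $\|\psi'-\psi_{m,n}\|<1/(4k)$, and let $d$ be a fresh tuple of constants from $C$. Then
\[ p_1 := p_0 \cup \left\{\psi'(\bar c,d) < \tfrac{1}{2k}\right\} \]
is satisfied in $M$ via $d\mapsto d^*$, so $p_1$ is a condition; player $\exists$ plays $p_1$.

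In any definitive extension $\bar p\supseteq p_1$, the compiled structure $A^+$ has $\psi'(\bar c^A,d^A)^A<1/(2k)$, from which
\[\varphi_{m,n}(\bar c^A)^A \leq \psi_{m,n}(\bar c^A,d^A)^A \leq \psi'(\bar c^A,d^A)^A + \tfrac{1}{4k} < \tfrac{1}{2k}+\tfrac{1}{4k} < \tfrac{1}{k}.\]
Therefore $\inf_n \varphi_{m,n}(\bar c^A)^A < 1/k$, so $p_1$ forces $P_{m,\bar c,k}$, and Lemma \ref{properties}(1) gives the required enforceability.

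The conceptual point is direct: local universality lets us transfer the inner existential witnesses available in the single model $M$ into witnesses compatible with any opening move $p_0$ of $\forall$, while the outer ``$\sup_x$'' is absorbed by extra canonicity and the Conjunction Lemma. The main obstacle is the routine but error-prone juggling of approximation constants needed to pass between the (possibly non-restricted) quantifier-free $\psi_{m,n}$ and the restricted $\psi'$ actually appearing in the condition, exactly as in the proof of Proposition \ref{enforceuniversal}.
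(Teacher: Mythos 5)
Your proposal is correct and follows essentially the same route as the paper's proof: reduce via extra canonicity and the Conjunction Lemma to enforcing $\inf_n\varphi_{m,n}(\bar c)<1/k$, then use local universality to transfer $\forall$'s opening condition into the model with property $P$ and play the resulting existential witness as a new condition. Your extra care in pulling back from the ultrapower and in replacing $\psi_{m,n}$ by a restricted approximation only spells out details the paper leaves implicit (as in Proposition \ref{enforceuniversal} and Lemma \ref{existentialforcing}).
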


\begin{proof}
Suppose that $A$ is a locally universal model of $T$ with property $P$.  Suppose that $P$ is defined by the $\sup\bigvee\inf$-sentences $$\sigma_m=\sup_{x_m}\inf_n \varphi_{mn}(x_m).$$  Fix a tuple of distinct constants $c$ and $k\geq 1$.  By the conjunction lemma and the fact that being extra canonical is enforceable, it suffices to show that $\inf_n\varphi_{mn}(c)<1/k$ is enforceable.  Here is the strategy:  suppose that $\forall$ opens with the condition $p_0$ which is satisfied in some $L(C)$-structure $B^+\models T$.  Embed $B:=B^+|L$ into an ultrapower $A^\u$ of $A$ and let $(A^\u)^+$ be the expansion of $A^\u$ to an $L(C)$-structure that makes this embedding of $L$-structures an embedding of $L(C)$-structures as well.  It follows that there is an expansion $A^+$ of $A$ such that $p_0$ is also satisfied in $A^+$.  Since $A$ has property $P$, there is $n$ such that $\varphi_{mn}(c^{A^+})<1/k$.  It follows that $p_1:=p_0\cup \{\varphi_{mn}(c)<1/k\}$ is a condition and if $\exists$ plays $p_1$, the compiled structure will satisfy the property $\inf_n \varphi_{mn}(c)<1/k$.
\end{proof}

Of particular interest is what (infinitary) first-order properties can be forced. 
\begin{df}
Let $p$ be a condition, $\varphi$ an $L(C)_{\omega_1,\omega}$-sentence, and $r\in \r^{>0}$.  We write $p\Vdash^g \varphi<r$ if $p$ forces the property $\varphi<r$.\footnote{We use the notation $\Vdash^g$ to indicate that this is the forcing property stemming naturally from the game apparatus.  In the next section, we will soon see that this is exactly the notion of \emph{weak forcing} already present in the literature.}  When $p=\emptyset$, we simply write $\Vdash^g \varphi<r$.  We also set $$F_p^g(\varphi):=\inf\{r \ : \ p\Vdash^g \varphi<r\}.$$
\end{df}

By Lemma \ref{properties} (3), we have that $q\supseteq p$ implies $F_q^g(\varphi)\leq F_p^g(\varphi)$.  The following lemma is immediate from the definitions.

\begin{lemma}\label{forceapprox}
Suppose that $p\Vdash^g \varphi<r$ and $\|\varphi-\psi\|<\epsilon$.  Then $p\Vdash^g \psi<r+\epsilon$.
\end{lemma}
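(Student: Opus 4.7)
The plan is to unwind the definition of $\Vdash^g$ and use nothing beyond the uniform estimate provided by the seminorm. By definition, $p \Vdash^g \varphi < r$ means that $p$ forces the property ``$\varphi < r$'', and by Lemma \ref{properties}(1) this is equivalent to saying that whenever $\forall$ plays a condition $p_0 \supseteq p$, the resulting position is winning for $\exists$ in $G(\varphi<r)$. Fix such a winning strategy $\tau$ for $\exists$ from any position extending $p$; following $\tau$ against any $\forall$-play produces a definitive play $\bar p$ whose compiled structure satisfies $\varphi^{A^+(\bar p)} < r$.

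The key observation is then that, as an $L$-structure (with some distinguished constants), $A^+(\bar p)$ is among the structures over which the seminorm is taken. Since $\|\varphi - \psi\| < \epsilon$, the definition of the seminorm gives
$$|\varphi^{A^+(\bar p)} - \psi^{A^+(\bar p)}| \leq \|\varphi-\psi\| < \epsilon,$$
so that $\psi^{A^+(\bar p)} \leq \varphi^{A^+(\bar p)} + \epsilon < r+\epsilon$. Thus the very same strategy $\tau$ is also winning for $\exists$ in $G(\psi < r+\epsilon)$ from any position extending $p$. Applying Lemma \ref{properties}(1) once more, this proves $p \Vdash^g \psi < r+\epsilon$.

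I do not foresee a real obstacle here; the argument is pure unwinding of definitions plus the pointwise bound built into the seminorm. The only mild subtlety worth flagging is that $\varphi$ and $\psi$ are $L(C)_{\omega_1,\omega}$-sentences rather than finitary $L$-formulae, so one should read $\|\varphi-\psi\|$ as the natural extension of the seminorm from the preliminaries, namely $\sup_B |\varphi^B - \psi^B|$ with $B$ ranging over $L(C)$-structures in which both are interpretable (in particular, $A^+(\bar p)$ is such a $B$). Once this is noted, the above two-line deduction goes through verbatim.
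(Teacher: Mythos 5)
Your argument is correct and is exactly what the paper intends: it gives no proof at all, declaring the lemma ``immediate from the definitions,'' and your unwinding (any definitive compiled structure with $\varphi^{A^+}<r$ automatically satisfies $\psi^{A^+}<r+\epsilon$ by the seminorm bound, so the same winning strategy works for the new game) is precisely that immediate argument, including the sensible reading of $\|\varphi-\psi\|$ for $L(C)_{\omega_1,\omega}$-sentences.
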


The following lemma is quite useful:

\begin{lemma}\label{existentialforcing}
Suppose that $p$ is a condition all of whose constants are contained in the tuple $c$.  Further suppose that $\theta(x)$ an existential $L$-formula and $\epsilon>0$ are such that $T\cup p\cup \{\theta(c)<\epsilon\}$ is satisfiable.  Then there is a condition $q\supseteq p$ such that $q\Vdash^g \theta(c)<\epsilon$.
\end{lemma}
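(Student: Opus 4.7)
The plan is to write $\theta(x) = \inf_y \phi(x,y)$ with $\phi(x,y)$ quantifier-free; it suffices to handle this shape, since a general existential formula is approximated by such formulas (or built from them using quantifier-free continuous combinations). The idea is to introduce fresh constants from $C$ to serve as witnesses for the existential quantifier, and then force those witnesses to bear the right approximate relation to $c$.

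First, invoke satisfiability of $T\cup p\cup\{\theta(c)<\epsilon\}$ to pick a model $A^+\models T\cup p$ together with a tuple $a$ in $A$ such that $\phi^{A^+}(c^{A^+},a)<\epsilon'$ for some $\epsilon'<\epsilon$. Choose a tuple $d$ of fresh distinct constants from $C$ (available since $p$ uses only finitely many constants and $C$ is infinite), of the arity of $y$, and expand $A^+$ to interpret $d$ as $a$. Fix $\eta>0$ small enough that $\epsilon'+2\eta<\epsilon$, and use density of restricted formulas to pick a restricted quantifier-free $\phi'$ with $\|\phi-\phi'\|<\eta$. Then $\phi'^{A^+}(c^{A^+},a)<\epsilon'+\eta$, so
\[
q\;:=\;p\cup\{\phi'(c,d)<\epsilon'+\eta\}
\]
is a condition.

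Next, verify $q\Vdash^g\theta(c)<\epsilon$. By Lemma \ref{properties}(1), it suffices to check that whenever $\forall$ plays $p_0\supseteq q$, the resulting position is winning for $\exists$ in $G(\theta(c)<\epsilon)$. Player $\exists$ just follows any strategy that ensures the resulting play $\bar p$ is definitive, which is always possible (as noted just before Definition of $G(P)$). Because $q\subseteq\bar p$ and the compiled structure $A^+(\bar p)$ satisfies each open condition in $\bar p$, we get $\phi'^{A^+(\bar p)}(c,d)\le\epsilon'+\eta$, and therefore
\[
\phi^{A^+(\bar p)}(c,d)\;\le\;\phi'^{A^+(\bar p)}(c,d)+\eta\;\le\;\epsilon'+2\eta\;<\;\epsilon.
\]
Since $d^{A^+(\bar p)}$ lies in $A(\bar p)$, this gives
\[
\theta^{A^+(\bar p)}(c)\;=\;\inf_{y\in A(\bar p)}\phi^{A^+(\bar p)}(c,y)\;\le\;\phi^{A^+(\bar p)}(c,d)\;<\;\epsilon,
\]
as required.

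The substantive point is really bookkeeping: one has to balance the tolerance $\epsilon-\epsilon'$ against the restricted-formula approximation error so that both "$q$ is a condition" (strict inequality realized in a model) and "$q$ enforces $\theta(c)<\epsilon$" (compiled-structure inequality after undoing the approximation) hold simultaneously. The mild but real subtlety, compared to the classical case in \cite{hodges}, is that conditions must use \emph{restricted} formulas, so an approximation step is needed; this is the only place where continuity of the language enters in an essential way, and the slack $\eta<(\epsilon-\epsilon')/2$ absorbs it cleanly.
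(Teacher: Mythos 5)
Your proof is correct and follows essentially the same route as the paper's: write $\theta(x)=\inf_y\psi(x,y)$, name a witness from a model of $T\cup p\cup\{\theta(c)<\epsilon\}$ by fresh constants, and adjoin the resulting quantifier-free instance to $p$. The only difference is that you carefully handle the approximation by restricted formulas and spell out the forcing verification, details the paper's one-line proof leaves implicit.
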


\begin{proof}
Write $\theta(x)=\inf_y \psi(x,y)$.  Let $B^+$ be an $L(C)$-structure such that $B^+\models T\cup p\cup \{\theta(c)<\epsilon\}$.  Let $d$ be a tuple of constants such that $\psi(c,d)<\epsilon$.  Then $q:=p\cup\{\psi(c,d)<\epsilon\}$ is a condition and clearly $q\Vdash^g \theta(c)<\epsilon$.
\end{proof}

The following proposition is central for much of what we do in future sections.

\begin{prop}
It is enforceable that the compiled structure be an e.c.\ model of $T_\forall$.  
\end{prop}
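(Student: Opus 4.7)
The plan is to combine Proposition~\ref{enforceuniversal} with the Conjunction Lemma and countably many ``local'' requirements that together express existential closedness relative to $T_\forall$. Since any inclusion $A \subseteq B$ automatically gives $\theta^B(c) \leq \theta^A(c)$ when $\theta$ is existential, a model $A \models T_\forall$ is e.c.\ for $T_\forall$ exactly when the reverse inequality $\theta^A(c) \leq \theta^B(c)$ holds for every existential $L$-formula $\theta(x)$, every tuple $c$ in $A$, and every extension $B \supseteq A$ with $B \models T_\forall$. Using the enforceability of extra canonicality, the density of restricted formulas among all formulas, and the uniform continuity of existential formulas, it suffices to verify this only at interpretations in $A$ of tuples from $C$, and approximating by rationals further reduces the task to enforcing, for each quadruple $(\theta,c,r,\epsilon)$ with $\theta$ a restricted existential $L$-formula, $c$ a tuple of distinct constants from $C$, and $r,\epsilon > 0$ rational, the property
\[
Q_{\theta,c,r,\epsilon}\colon\quad \text{if some } B \supseteq A,\ B \models T_\forall,\ \text{has } \theta^B(c) < r,\ \text{then } \theta^A(c) \leq r + \epsilon.
\]
By Proposition~\ref{enforceuniversal}, the Conjunction Lemma, and the countability of the index set, it is enough to show each $Q_{\theta,c,r,\epsilon}$ is enforceable; equivalently, by Lemma~\ref{properties}, in response to any $\forall$-move $p_0$ player $\exists$ can play $p_1 \supseteq p_0$ forcing $Q_{\theta,c,r,\epsilon}$.

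Fix such a quadruple and consider two cases. If $T_\forall \cup p_0 \cup \{\theta(c) < r\}$ is unsatisfiable, then $p_0$ already forces $Q_{\theta,c,r,\epsilon}$: for any compiled $A^+$ extending $p_0$ with $A \models T_\forall$ and any $B \supseteq A$ with $B \models T_\forall$, the expansion $B^+$ obtained by interpreting each constant of $C$ exactly as in $A^+$ satisfies $T_\forall \cup p_0$, so $\theta^B(c) \geq r$, making the antecedent of $Q_{\theta,c,r,\epsilon}$ vacuous. If instead $T_\forall \cup p_0 \cup \{\theta(c) < r\}$ is satisfied by some $A^+$ with $A \models T_\forall$, then, invoking the standard continuous-logic fact that every model of $T_\forall$ embeds into a model of $T$, we find $B \models T$ with $A \subseteq B$ and expand to $B^+ \supseteq A^+$; existentiality of $\theta$ gives $\theta^{B^+}(c) \leq \theta^{A^+}(c) < r$, so $T \cup p_0 \cup \{\theta(c) < r\}$ is satisfiable. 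Lemma~\ref{existentialforcing} (applied, if necessary, with the tuple enlarged to include all constants appearing in $p_0$) then produces $p_1 \supseteq p_0$ with $p_1 \Vdash^g \theta(c) < r$; after $\exists$ plays $p_1$, every compiled structure satisfies $\theta^A(c) \leq r < r + \epsilon$, as required.

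The one slightly delicate point is the bridge from $T_\forall$-satisfiability to $T$-satisfiability in the second case, which is needed because Lemma~\ref{existentialforcing} requires its witnessing condition to be satisfiable together with $T$. Beyond that, the argument is a routine instance of the ``enforce each of countably many witnessing requirements via the Conjunction Lemma'' pattern already used in the proofs of Propositions~\ref{enforceuniversal} and~\ref{enforcesbvi}.
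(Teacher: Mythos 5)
Your proposal is correct and takes essentially the same route as the paper: reduce, via the Conjunction Lemma together with enforcing canonicity and modelhood of $T_\forall$, to countably many conditional requirements indexed by an existential formula, a tuple of constants from $C$, and rational thresholds, then case-split on whether $p_0$ is compatible (over the theory) with $\theta(c)<r$, using Lemma \ref{existentialforcing} to add a witness in the compatible case. The only cosmetic difference is in the incompatible case, where the paper exhibits an explicit universal sentence belonging to $T_\forall$ while you argue semantically that quantifier-free conditions retain their values in any extension interpreting the constants the same way --- the same observation in different clothing.
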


\begin{proof}
Suppose that $c$ is a tuple of distinct constants, $\varphi(x)$ is an existential formula, and $r\in \mathbb{Q}^{>0}$.  By the Conjunction Lemma, it is enough to enforce the following property:  if $\varphi(c)\geq r$, then there is no extension of the compiled structure that models $T_\forall \cup \{\varphi(c)<r\}$.  Here is the winning strategy for $\exists$:  Suppose that $\forall$ plays $p_0=\{\psi_i(c,d)<r_i \ : \ i=1,\ldots,k\}$. If $T\cup p_0\models \varphi(c)\geq r$, then $$T_\forall\models \sup_x\min(\sup_y \min_{1\leq i\leq k} r_i\dotminus \psi_i(x,y),r\dotminus \varphi(x))=0.$$  In this case, no extension of of the compiled structure models $T_\forall \cup\{\varphi(c)<r\}$, whence the conditional statement that we are trying to enforce is true.  Otherwise, by Lemma \ref{existentialforcing}, there is $B^+\models T\cup p_0\cup \{\varphi(c)<r\}$, so there is a constant $c'$ such that $p_1:=p_0\cup \{\theta(c,c')<r\}$ is a condition, where $\varphi(x)=\inf_y \theta(x,y)$.  If $\exists$ play $p_1$, then the compiled structure models $\varphi(c)<r$, whence, once again, the conditional statement that we are trying to enforce is true.    
\end{proof}

Our notion of forcing satisfies a useful homogeneity property.  For $\pi$ a permutation of $C$ and $\varphi$ an $L(C)_{\omega_1,\omega}$-sentence, let $\pi(\varphi)$ be the $L(C)_{\omega_1,\omega}$-sentence obtained by replacing every $c\in C$ with $\pi(c)$.  If $p$ is a condition, let $\pi(p)$ denote the condition obtained by replacing every $\varphi<r$ in $p$ with $\pi(\varphi)<r$.  Once again, we leave the proof of the next lemma to the reader.  

\begin{lemma}[Homogeneity]\label{homog}
Suppose that $\pi$ is a permutation of $C$, $p$ a condition, $\varphi$ an $L(C)_{\omega_1,\omega}$-sentence, and $r\in \r^{>0}$.  If $p\Vdash^g \varphi<r$, then $\pi(p)\Vdash^g \pi(\varphi)<r$.
\end{lemma}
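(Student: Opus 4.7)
The plan is simply to transport winning strategies via the natural action of $\pi$ on the game; the whole lemma is, in essence, the observation that $\pi$ is an automorphism of the game apparatus. To make this precise I would first verify that $q\mapsto \pi(q)$ is an inclusion-preserving bijection on the set of conditions: if $B^+\models T\cup q$, define $B^{+\pi}$ to have the same $L$-reduct as $B^+$ but with $c^{B^{+\pi}}:=(\pi^{-1}(c))^{B^+}$ for each $c\in C$. Then $(\pi(\psi))^{B^{+\pi}}=\psi^{B^+}$ for every $L(C)$-formula $\psi$, so $B^{+\pi}\models T\cup \pi(q)$. The same argument applied to $\pi^{-1}$ provides the inverse, and inclusion of conditions is clearly preserved under $q\mapsto \pi(q)$.

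Assuming $p\Vdash^g \varphi<r$, I want to show $\pi(p)\Vdash^g \pi(\varphi)<r$. By Lemma \ref{properties}(1), it suffices, given any $\forall$-opening $q_0\supseteq \pi(p)$, to produce a winning $\exists$-strategy from position $q_0$ in $G(\pi(\varphi)<r)$. Since $\pi^{-1}(q_0)\supseteq p$, Lemma \ref{properties}(1) supplies a winning strategy $\sigma$ for $\exists$ from position $\pi^{-1}(q_0)$ in $G(\varphi<r)$. Define the conjugate strategy
$$\tau(q_0,q_2,\ldots,q_{2k}):=\pi\bigl(\sigma(\pi^{-1}(q_0),\pi^{-1}(q_2),\ldots,\pi^{-1}(q_{2k}))\bigr).$$
Any play $\bar q$ conforming to $\tau$ is the $\pi$-image of a play $\pi^{-1}(\bar q)$ that conforms to $\sigma$; in particular, $\pi^{-1}(\bar q)$ is definitive, and hence so is $\bar q$.

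Finally, I would compare the two compiled structures. Because $\bar q$ and $\pi^{-1}(\bar q)$ differ only by the relabeling of constants, the map $c\mapsto \pi^{-1}(c)$ on the underlying term algebra descends to an $L$-isomorphism $A^+(\bar q)\to A^+(\pi^{-1}(\bar q))$. A routine induction on formula complexity, with $\pi$ commuting with every connective, quantifier, and countable $\bigvee$ or $\bigwedge$, yields the identity
$$(\pi(\psi))^{A^+(\bar q)}=\psi^{A^+(\pi^{-1}(\bar q))}$$
for every $L(C)_{\omega_1,\omega}$-sentence $\psi$. Specializing to $\psi=\varphi$ and using that $\sigma$ forces $\varphi^{A^+(\pi^{-1}(\bar q))}<r$ gives $(\pi(\varphi))^{A^+(\bar q)}<r$, as required. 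There is no real obstacle here: the content of the lemma is just the bookkeeping verification that $\pi$ acts as an automorphism of the game apparatus, intertwining the forcing relations for $\varphi$ and $\pi(\varphi)$.
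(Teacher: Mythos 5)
Your proof is correct: the conjugation-of-strategies argument, together with the verification that $q\mapsto\pi(q)$ preserves conditions (via reinterpreting constants in a model) and that the compiled structures for a play and its $\pi$-preimage satisfy $(\pi(\psi))^{A^+(\bar q)}=\psi^{A^+(\pi^{-1}(\bar q))}$, is exactly the bookkeeping the paper has in mind when it leaves this lemma to the reader. Nothing further is needed.
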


The next lemma is the analog of \cite[Lemma 2.3.3(d)]{hodges} and is an indication of why game forcing coincides with weak forcing.

\begin{lemma}\label{twostep}
For every condition $p$ and every $L(C)_{\omega_1,\omega}$-sentence $\varphi$, we have
$$F_p^g(\varphi)=\sup_{q\supseteq p}\inf_{q'\supseteq q}F_{q'}^g(\varphi).$$
\end{lemma}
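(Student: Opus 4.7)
The plan is to prove the two inequalities separately. The easy direction
$\sup_{q\supseteq p}\inf_{q'\supseteq q}F_{q'}^g(\varphi) \leq F_p^g(\varphi)$
will follow immediately from monotonicity: by Lemma~\ref{properties}(3) applied to each property ``$\varphi<r$'', extending a condition can only decrease its forcing value, so $F_{q'}^g(\varphi)\leq F_p^g(\varphi)$ whenever $q'\supseteq q\supseteq p$; taking the infimum over $q'\supseteq q$ and then the supremum over $q\supseteq p$ preserves this bound.

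For the reverse inequality, I would set $s:=\sup_{q\supseteq p}\inf_{q'\supseteq q}F_{q'}^g(\varphi)$, fix an arbitrary $r\in\r^{>0}$ with $r>s$, and show that $p\Vdash^g\varphi<r$; letting $r\searrow s$ would then yield $F_p^g(\varphi)\leq s$, as desired. By Lemma~\ref{properties}(1), the forcing claim reduces to exhibiting, for each condition $p_0\supseteq p$ that $\forall$ might open with, a winning response for $\exists$ in the game $G(\varphi<r)$. Since $\inf_{q'\supseteq p_0}F_{q'}^g(\varphi)\leq s<r$, I can pick $p_1\supseteq p_0$ with $F_{p_1}^g(\varphi)<r$; by definition of $F_{p_1}^g$, there is $r'<r$ with $p_1\Vdash^g\varphi<r'$, hence $p_1\Vdash^g\varphi<r$ (any compiled structure satisfying $\varphi<r'$ also satisfies $\varphi<r$). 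Then $\exists$ plays $p_1$, and because $p_1$ forces $\varphi<r$, every subsequent position is winning for $\exists$, so $p_1$ is indeed a winning response.

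The proof should be essentially a direct unwinding of definitions — all the real game-theoretic content is carried by Lemma~\ref{properties}(1). The only subtle point worth flagging is the monotonicity of forcing in the threshold: if $p\Vdash^g\varphi<r'$ and $r'\leq r$, then $p\Vdash^g\varphi<r$, since a winning strategy for $\exists$ in $G(\varphi<r')$ is automatically a winning strategy in $G(\varphi<r)$. I do not anticipate any genuine obstacle beyond keeping track of which player is moving where.
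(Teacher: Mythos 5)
Your proposal is correct and follows essentially the same route as the paper: the easy inequality via monotonicity of $F^g$ under extension, and the hard one by having $\exists$ answer $\forall$'s opening $p_0\supseteq p$ with some $p_1\supseteq p_0$ satisfying $F_{p_1}^g(\varphi)<r$ and then follow the strategy witnessing $p_1\Vdash^g\varphi<r$. The threshold-monotonicity point you flag (from $\varphi<r'$ to $\varphi<r$) is used implicitly in the paper's proof as well, so there is no substantive difference.
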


\begin{proof}
First, if $q\supseteq p$, then $\inf_{q'\supseteq q} F_{q'}^g(\varphi)\leq F_q^g(\varphi)\leq F_p^g(\varphi)$, so $$\sup_{q\supseteq p}\inf_{q'\supseteq q} F_{q'}^g(\varphi)\leq F_p^g(\varphi).$$  Now suppose that $\sup_{q\supseteq p}\inf_{q'\supseteq q} F_{q'}^g(\varphi)<r$; it suffices to show that $p\Vdash^g \varphi< r$.  Here is the strategy:  suppose that $\forall$ opens with $q\supseteq p$.  Then $\exists$ should play $q'\supseteq q$ so that $F_{q'}^g(\varphi)<r$, for then $q'\Vdash^g \varphi<r$ and by following the strategy that witnesses this latter statement, $\exists$ can enforce that $\varphi<r$ holds at the end of this play.  
\end{proof}

\subsection{The finite forcing companion $T^f$}

In this subsection, we discuss the finite forcing companion $T^f$ of $T$ consisting of all enforceable conditions.  The main result that we want to establish is that the theory $T^f$ is complete when $T$ has JEP.  Towards this end, we first show that, given any sentence $\varphi$, any ``position'' $p$, and any ``accuracy'' $\epsilon>0$, we can find a further position $q\supseteq p$ forcing $\varphi$ to have a value in an interval of length at most $\epsilon$.  It will become useful to extend our official use of the symbol $\Vdash^g$.  For example, we may write $p\Vdash^g a<\varphi<b$ to mean that $p$ forces the property $a<\varphi<b$.  This result is similar to \cite[Remark 3.6]{farahmagidor}.

\begin{prop}
Given a condition $p$, $L(C)_{\omega_1,\omega}$-sentence $\varphi$, and $\epsilon>0$, there is $q\supseteq p$ and $a<b$ with $b-a<\epsilon$ such that $q\Vdash^g a<\varphi<b$.
\end{prop}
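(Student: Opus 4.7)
The plan is to prove this by induction on the complexity of the $L_{\omega_1,\omega}$-sentence $\varphi$, reducing the general case to the atomic case, which is handled by extending $p$ directly. For the base case where $\varphi$ is atomic (hence restricted quantifier-free), I would pick any model $A^+\models T\cup p$, set $v:=\varphi^{A^+}$, and take $q:=p\cup\{\varphi<v+\epsilon/4,\;1-\varphi<1-v+\epsilon/4\}$; this is a condition (satisfiable in $A^+$) and directly forces $v-\epsilon/4<\varphi<v+\epsilon/4$, an interval of length $\epsilon/2<\epsilon$. For a continuous connective $\varphi=C(\varphi_1,\ldots,\varphi_n)$, I would use uniform continuity of $C$ to pick $\delta>0$ small enough that $\delta$-approximations of the $\varphi_i$ combine to give an $\epsilon/2$-approximation of $\varphi$, then iteratively apply the induction hypothesis to each $\varphi_i$, chaining extensions $p=q_0\subseteq q_1\subseteq\cdots\subseteq q_n$ and invoking Lemma~\ref{properties}(3) to preserve each earlier forcing.

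The heart of the argument is the quantifier (and dually, countable) case, say $\varphi=\inf_y\theta(y)$. Fix $A^+\models T\cup p$ with $\varphi^{A^+}=v$. For the upper bound, choose a witness $a^*\in A^+$ with $\theta^{A^+}(a^*)<v+\epsilon/8$, interpret a fresh constant $d\in C$ (not appearing in $p$) as $a^*$, and apply the inductive hypothesis to the simpler sentence $\theta(d)$ to obtain $q_0\supseteq p$ with $q_0\Vdash^g\theta(d)<v+\epsilon/4$, whence $q_0\Vdash^g\varphi\leq\theta(d)<v+\epsilon/4$. For the lower bound, I would exploit that $\theta^{A^+}(c)\geq v$ for every $c\in C$ (since $v$ is the infimum) and have $\exists$ use her subsequent moves to force $\theta(c_n)$ into a small interval about its value in a suitable model, ensuring $\theta(c_n)>v-\epsilon/4$ at each $c_n\in C$. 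Combined with the enforceable density of $C$ in the compiled structure and continuity of $\theta$, this forces $\inf_y\theta(y)\geq v-\epsilon/4$, which after a small further adjustment yields the required strict lower bound.

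The main obstacle I anticipate is verifying that $\exists$'s strategy for the lower bound remains executable against a hostile $\forall$, who may try to drive the game into models where some $\theta(c_n)$ is substantially below $v$. I would handle this by having $\exists$, at each of her moves, re-choose a target model consistent with the current condition, using that the already-committed upper bound $\theta(d)<v+\epsilon/4$ constrains the target value to remain stable up to order $\epsilon/4$. The dual case $\varphi=\sup_y\theta(y)$ is handled by applying the same argument to $1-\varphi$, and the countable-operator cases $\varphi=\bigvee_m\varphi_m,\;\bigwedge_m\varphi_m$ are analogous, using a single $\varphi_m$ for the witness direction and the density/strategy argument for the opposite direction.
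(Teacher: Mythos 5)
Your atomic and connective cases are fine, but the heart of the argument --- the lower bound in the case $\varphi=\inf_y\theta(y)$ (and likewise the non-witness direction of the countable connectives) --- has a genuine gap, and the obstacle you anticipate is fatal to your strategy rather than a technicality. The value $v=\varphi^{A^+}$ computed in one particular model of $T\cup p$ has no forcing-theoretic significance: there may be other models of $T\cup q$ (for any $q$ you have committed to so far) in which $\inf_y\theta(y)$ is far below $v$, and in that case $\forall$ can simply play a condition $\{\theta(d')<v-\tfrac12\}$ with $d'$ a fresh constant, which is satisfiable and hence a legal move. Nothing in your committed witness bound $\theta(d)<v+\epsilon/4$ constrains the infimum from below, so ``re-choosing a target model'' at each of $\exists$'s moves cannot prevent the compiled structure from satisfying $\inf_y\theta(y)<v-\epsilon/4$. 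In other words, the statement you are trying to enforce, $\varphi>v-\epsilon/4$ with $v$ anchored to a single model, is in general not forceable by any extension of $p$; the conclusion of the proposition only asserts that \emph{some} interval of length $<\epsilon$ works, and that interval must be located using forcing data, not the value in one model.

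The paper's proof supplies exactly the missing idea. For the countable infimum it sets $a:=\inf\{a' : q\Vdash^g \varphi<a'\text{ for some }q\supseteq p\}$, takes $q\supseteq p$ with $q\Vdash^g\varphi<a+\delta$, and then shows $q\Vdash^g\varphi\geq a-2\delta$ by appealing to Lemma \ref{twostep}: for every $q'\supseteq q$ one pins each $\varphi_i$ into a short interval, and the minimality of $a$ forbids any extension from forcing $\varphi_i$ below $a-\epsilon$, which yields the lower bound for all extensions and hence for $q$. The quantifier case $\inf_x\psi(x)$ is then reduced to the countable case over the constants $c\in C$, using that canonicity of the compiled structure is enforceable (this replaces your density-of-$C$ argument, but crucially with the threshold $a$ rather than $v$). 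Your witness step for the upper bound is essentially Lemma \ref{existentialforcing} and is fine; to repair the proof you should replace the model-anchored $v$ by the forceable threshold $a$ and invoke Lemma \ref{twostep}, since a lower bound on an infimum can never be written into a condition directly and must be extracted from what no extension can force.
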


\begin{proof}
By Lemma \ref{forceapprox}, it suffices to consider the case that $\varphi$ is restricted.  We may thus prove the proposition for such $\varphi$ by induction on complexity.  

First suppose that $\varphi$ is atomic.  Since $p$ is a condition, there is $A^+\models T\cup p$.  Set $r:=\varphi^{A^+}$ and set $q:=p\cup \{|\varphi-r|<\epsilon\}$.  Then $q$ is a condition extending $p$ and $q\Vdash^g |\varphi-r|<\epsilon$.

If $\varphi=\neg \psi$ and $q\supseteq p$ is such that $q\Vdash^g a<\psi<b$ with $b-a<\epsilon$, then $q\Vdash^g 1-b<\varphi<1-a$.  The case that $\varphi=\frac{1}{2}\psi$ is handled similarly.

Now suppose that $\varphi=\psi\dotplus\theta$.  Take $q'\supseteq p$ such that $q'\Vdash^g a<\psi<b$ with $b-a<\frac{\epsilon}{2}$.  Take $q\supseteq q'$ such that $q\Vdash^g c<\theta<d$ with $d-c<\frac{\epsilon}{2}$.  Then $q\Vdash^g a+c<\varphi<b+d$ and $(b+d)-(a+c)<\epsilon$.

Now suppose that $\varphi=\bigwedge \phi_i$.  Let $a:=\inf\{a' \ : q\Vdash^g \varphi<a' \text{ for some }q\supseteq p\}$.  Set $\delta:=\frac{\epsilon}{3}$ and take $q\supseteq p$ such that $q\Vdash^g \varphi<a+\delta$.  We claim that $q\Vdash^g \varphi\geq a-2\delta$, which settles this case.  To establish this claim, we use Lemma \ref{twostep}.  Take $q'\supseteq q$.  For each $i$, take $q_i\supseteq q'$ such that $q_i\Vdash^g c_i<\varphi_i<d_i$ with $d_i-c_i<\epsilon$.  Since $q_i\not\Vdash^g \varphi_i<a-\epsilon$ (else $q_i\Vdash^g \varphi<a-\epsilon$, contradicting the definition of $a$), we have $a-\epsilon<d_i$.  It follows that $q_i\Vdash^g \varphi_i>c_i>d_i-\epsilon>a-2\epsilon$.  Since $q'\supseteq q$ was arbitrary, it follows that $q\Vdash^g \varphi_i>a-2\epsilon$ for each $i$, whence $q\Vdash^g \varphi\geq a-2\epsilon$.

Finally, suppose that $\varphi=\inf_x \psi(x)$.  By the previous case, we may choose $q\supseteq p$ such that $q\Vdash^g a<\inf_{c\in C}\psi(c)<b$ with $b-a<\frac{\epsilon}{2}$.  Since being canonical is enforceable, it follows that $q\Vdash^g a-\frac{\epsilon}{2}< \inf_x \psi(x)<b$.
\end{proof}

\begin{thm}
Suppose that $T$ has JEP.  Then for every $L_{\omega_1,\omega}$-sentence $\varphi$, there is a unique $r$ such that $\varphi=r$ is enforceable.
\end{thm}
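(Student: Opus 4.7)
The plan is to set $r^+ := F^g_\emptyset(\varphi)$, to analogously define $r^- := \sup\{s \in \r : \emptyset \Vdash^g \varphi > s\}$, to show that $r^+ = r^-$ using JEP, and then to conclude by the Conjunction Lemma that $\varphi = r^+$ is enforceable. Uniqueness is automatic: if both $\varphi = r_1$ and $\varphi = r_2$ were enforceable with $r_1 \neq r_2$, the Conjunction Lemma would produce a strategy forcing the compiled structure to satisfy contradictory numerical values for $\varphi$.

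The key role of JEP lies in the following compatibility fact: if $p_1, p_2$ are conditions whose constants from $C$ are disjoint, then $p_1 \cup p_2$ is a condition. To see this, pick $A^+_i \models T \cup p_i$, apply JEP to the $L$-reducts $A_i$ to obtain $M \models T$ receiving embeddings $A_i \hookrightarrow M$, and expand $M$ to an $L(C)$-structure $M^+$ by interpreting the constants appearing in $p_i$ via the corresponding embedding (and arbitrarily otherwise); since each $p_i$ is quantifier-free and restricted, its satisfaction transfers along $A_i \hookrightarrow M$, so $M^+ \models T \cup p_1 \cup p_2$.

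Combining this with the Homogeneity Lemma (Lemma \ref{homog}), I next show that $F^g_q(\varphi)$ is independent of the condition $q$; the same argument will give the analogous independence for $r^-$. Since $\varphi$ is an $L_{\omega_1,\omega}$-sentence (so contains no constants from $C$), $\pi(\varphi) = \varphi$ for every permutation $\pi$ of $C$. Given conditions $p$ and $q$, suppose $p \Vdash^g \varphi < b$; to show $q \Vdash^g \varphi < b$, I describe $\exists$'s strategy from any position extending $q$: when $\forall$ extends the position to some $q_0$, choose $\pi$ sending the constants appearing in $p$ to constants not appearing in $q_0$. By homogeneity $\pi(p) \Vdash^g \varphi < b$, and by the compatibility fact $q_0 \cup \pi(p)$ is a condition extending $\pi(p)$. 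Player $\exists$ plays $q_0 \cup \pi(p)$ and then follows the strategy witnessing $\pi(p) \Vdash^g \varphi < b$. Taking infima and swapping $p,q$ gives $F^g_p(\varphi) = F^g_q(\varphi)$.

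Finally, $r^- \leq r^+$ holds in general, since otherwise the Conjunction Lemma applied to $\emptyset \Vdash^g \varphi < r^+$ and $\emptyset \Vdash^g \varphi > r^-$ would force a contradiction in the compiled structure. Conversely, the proposition immediately preceding the theorem furnishes, for each $\epsilon > 0$, a condition $q$ and reals $a < b$ with $b - a < \epsilon$ and $q \Vdash^g a < \varphi < b$; hence $a \leq r^- \leq r^+ \leq b$, so $r^+ - r^- < \epsilon$, forcing $r^+ = r^-$. The Conjunction Lemma (conjoining $\varphi < r^+ + 1/n$ and $\varphi > r^+ - 1/n$ over all $n$) then yields that $\varphi = r^+$ is enforceable. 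The step requiring real care is the JEP-based compatibility fact and its interaction with homogeneity; once in hand, the rest is a clean squeeze argument.
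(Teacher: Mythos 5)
Your proposal is correct and takes essentially the same route as the paper: both rest on the preceding approximation proposition, on the Homogeneity Lemma to rename the constants of one condition away from those of another, on JEP to merge two conditions with disjoint constants into a single condition, and on the Conjunction Lemma to pin down the value. The only difference is organizational -- you package the homogeneity-plus-JEP step as an explicit lemma that $F^g_q(\varphi)$ is independent of $q$ for a constant-free sentence $\varphi$ and then run a squeeze argument with $r^-=r^+$ (modulo the harmless imprecision that one should force $\varphi<r$ and $\varphi>s$ for intermediate values rather than at the endpoints $r^+$, $r^-$ themselves), whereas the paper derives a contradiction from two conditions forcing $\varphi$ into disjoint small intervals; the underlying mechanism is identical.
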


\begin{proof}
Fix $\epsilon>0$.  Fix an interval $(a,b)$ of length less than $\epsilon$ and a condition $p$ such that $p\Vdash^g a<\varphi<b$.  We claim that $\Vdash^g a\leq \varphi\leq b$.  Suppose otherwise.  Without loss of generality, we may assume that $\not\Vdash^g \varphi\leq b$.  Take $\delta>0$ such that $\not\Vdash^g \varphi<b+\delta$.  Take $(c,d)$ with $d-c<\delta$ and $q$ such that $q\Vdash^g c<\varphi<d$.  Then $b+\delta<d$ so $b<d-\delta<c$, so $(a,b)\cap (c,d)=\emptyset$.  Since $\varphi$ has no constants from $C$, by Lemma \ref{homog}, we may assume that $p$ and $q$ have no constants in common and thus can be realized in a common model of $T$ by JEP, which is a contradiction as then $p\cup q$ is a condition and $p\cup q\Vdash^g \varphi\in (a,b)\cap (c,d)$.  

Taking $\epsilon=\frac{1}{n}$, we get intervals $(a_n,b_n)$ of length at most $\frac{1}{n}$ such that $\Vdash^g a_n\leq \varphi\leq b_n$.  If $\bigcap_n [a_n,b_n]=\{r\}$, the Conjunction Lemma implies that $\varphi=r$ is enforceable.
\end{proof}

\begin{df}
We let $T^f$ be the $L$-theory containing the closed conditions $\sigma=r$ whenever that condition is enforceable.  $T^f$ is called the \emph{finite forcing companion of $T$}.
\end{df}

\begin{cor}
If $T$ has JEP, then $T^f$ is complete.
\end{cor}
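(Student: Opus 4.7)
The plan is to observe that the corollary is an essentially immediate consequence of the preceding theorem, so the argument should be short and definitional in character. I would begin by noting that every closed $L$-sentence $\sigma$ is in particular a closed $L_{\omega_1,\omega}$-sentence, so the previous theorem applies directly: there exists a unique real number $r_\sigma \in [0,1]$ such that the condition $\sigma = r_\sigma$ is enforceable. By the definition of $T^f$, this says precisely that $(\sigma = r_\sigma) \in T^f$, and moreover $r_\sigma$ is the only value of $\sigma$ occurring in $T^f$ in this way.

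Given this, completeness follows at once. Let $A$ and $B$ be any two models of $T^f$ and let $\sigma$ be an arbitrary closed $L$-sentence. Since $(\sigma = r_\sigma) \in T^f$, we have $\sigma^A = r_\sigma = \sigma^B$. Hence any two models of $T^f$ agree on the value of every closed $L$-sentence, which is the definition of completeness.

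There is no real obstacle here: all the content has been front-loaded into the previous theorem. The one small point I would mention in passing is that $T^f$ is itself consistent, so that the completeness statement is not vacuous; this follows because the Conjunction Lemma applied to the (countably many) restricted closed $L$-sentences shows that player $\exists$ has a strategy producing a compiled structure whose reduct realizes every condition in $T^f$ (approximating general $L$-sentences by restricted ones via Lemma \ref{forceapprox}). Thus the corollary reduces to citing the theorem and unpacking the definition of $T^f$.
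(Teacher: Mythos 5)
Your proof is correct and is exactly the intended argument: the paper states this corollary without proof because it is immediate from the preceding theorem together with the definition of $T^f$, which is precisely how you argue. Your added remark on consistency of $T^f$ (via the Conjunction Lemma over the countably many restricted sentences, plus Lemma \ref{forceapprox}) is a sound and harmless supplement.
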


Given an $L$-structure $A$, we may always expand it to a canonical $L(C)$-structure $A^+$ (although there is no canonical choice for doing this).  We may then define the \emph{diagram of $A$} to be the set of closed $L(C)$-conditions of the form $\varphi=0$, where $\varphi$ is a quantifier-free $L(C)$-sentence such that $\varphi^{A^+}=0$.  It is then a standard fact that $B^+\models T\cup \Diag(A)$ if and only if $B\models T$ and $A$ embeds into $B$.  (Even though $\Diag(A)$ depends on how we expand $A$ to $A^+$, this latter fact is independent of our choice.)  We will also write $\appdiag(A^+)$ (or simply $\appdiag(A)$) for the set of expressions of the form $\varphi<r$, where $\varphi$ is a restricted quantifier-free $L(C)$-sentence such that $\varphi^A<r$.  Of course, $T\cup \Diag(A)$ and $T\cup \appdiag(A)$ have the same models.  If $A$ happens to be a model of $T$, then $\appdiag(A)$ is the union of the set of conditions that are satisfied in $A^+$.

\begin{cor}\label{mutual}
$T_\forall=(T^f)_\forall$.
\end{cor}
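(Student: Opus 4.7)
The plan is to prove the two inclusions separately, with the second being the substantive one. For $T_\forall \subseteq (T^f)_\forall$, let $\sigma$ be universal with $T\models \sigma=0$; by Proposition \ref{enforceuniversal} it is enforceable that the compiled structure models $T_\forall$, so in particular $\sigma=0$ is enforceable, meaning $\sigma=0\in T^f$, and since $\sigma$ is universal this puts $\sigma=0\in (T^f)_\forall$.

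For the reverse inclusion, I would argue by contrapositive: take a universal $\sigma=\sup_x\varphi(x)$ with $\varphi$ quantifier-free such that $T\not\models \sigma=0$, and produce a model of $T^f$ in which $\sigma>0$. To do this, pick $A\models T$ and a tuple $a$ in $A$ with $\varphi^A(a)=s>0$; let $c$ be a tuple of fresh constants of matching length and expand $A$ to an $L(C)$-structure $A^+$ interpreting $c$ as $a$. Approximate $\varphi$ by a restricted quantifier-free formula $\psi$ with $\|\varphi-\psi\|<s/4$, so that $(\neg\psi(c))^{A^+}<1-3s/4$. The set $p_0:=\{\neg\psi(c)<1-3s/4\}$ is then a condition, and any compiled structure produced from a play $\bar p\supseteq p_0$ satisfies $\psi(c)>3s/4$, hence $\varphi(c)>s/2$, hence $\sigma \geq \varphi(c)>s/2$.

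Because the language is countable, $T^f$ is a countable set of conditions, each enforceable by definition. The Conjunction Lemma (Lemma \ref{properties}(4)) therefore makes ``the compiled structure is a model of $T^f$'' enforceable, and by Lemma \ref{properties}(2) this property is forced by every condition, in particular by $p_0$. Having $\forall$ open the game with $p_0$ and $\exists$ play the resulting winning strategy produces a compiled structure that is a model of $T^f$ and, because $p_0\subseteq \bar p$, also satisfies $\sigma>s/2>0$, contradicting $T^f\models \sigma=0$. The main point to pin down carefully is the construction of $p_0$: it has to be a genuine restricted condition, satisfiable in a model of $T$, and strong enough that any extension forces $\sigma>0$ in the compiled structure. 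The restricted approximation $\psi$, the fact that $\neg$ is a restricted connective, and the observation that every expression appearing in $\bar p$ holds in the compiled structure are the ingredients that make this step work.
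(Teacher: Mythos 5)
Your overall route is sound and genuinely differs from the paper's in one respect: the paper proves the hard inclusion by showing, via compactness, that every model of $T_\forall$ extends to a model of $T^f$, so in each auxiliary play it only ever needs to enforce a \emph{single} member of $T^f$ (with the witnessing condition $p\subseteq\appdiag(A)$ as $\forall$'s opening move); you instead try to enforce \emph{all} of $T^f$ in one play, together with your witnessing condition $p_0$ keeping $\sigma$ away from $0$. Your construction of $p_0$ is fine: $\neg\psi(c)$ is a restricted quantifier-free sentence and $p_0$ is satisfied in the expansion of $A$ interpreting $c$ as $a$, so it is a condition. One small caution: the compiled structure is only directly guaranteed to satisfy the expressions of $\bar p$ with non-strict inequality (player $\forall$ is free to push $\psi(c)$ arbitrarily close to the bound), so you should conclude $\psi(c)\geq 3s/4$ and hence $\sigma\geq\varphi(c)\geq s/2>0$; this costs nothing.

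The genuine gap is the assertion that ``because the language is countable, $T^f$ is a countable set of conditions.'' It is not: $T^f$ contains $\sigma=r$ for \emph{every} $L$-sentence $\sigma$ for which this is enforceable, and the space of all $L$-sentences (built with arbitrary continuous connectives) has the cardinality of the continuum; only the \emph{restricted} sentences form a countable family. So the Conjunction Lemma, which handles countably many properties, cannot be applied to $T^f$ directly, and this is exactly the step your proof leans on. The gap is repairable: apply the Conjunction Lemma to the countably many enforceable properties of the form $|\psi-q|\leq\delta$ with $\psi$ restricted and $q,\delta$ rational, and then use density of the restricted sentences (in the spirit of Lemma \ref{forceapprox}) to see that a compiled structure satisfying all of these is a model of all of $T^f$. (The fact you need, that ``the compiled structure models $T^f$'' is enforceable, also follows from Proposition \ref{enforcegeneric} together with Proposition \ref{chargen}, though those appear later in the paper; the paper's own proof of Corollary \ref{mutual} sidesteps the whole issue by its compactness reduction to one enforceable sentence at a time.)
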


\begin{proof}
Since it is enforceable that the compiled structure is a model of $T_\forall$, we have that $T_\forall\subseteq T^f$.  For the other direction, it is enough to show that any model of $T_\forall$ can be extended to a model of $T^f$.  Suppose that $A\models T_\forall$; we need to show that  $\Diag(A)\cup T^f$ is satisfiable.  By compactness, it suffices to show that, for any $p\in \appdiag(A)$ and any condition $\sigma=0$ belonging to $T^f$, there is a model of $p\cup \{\sigma=0\}$.  Since $\sigma=0$ is enforceable, we have that $p\Vdash^g\sigma$, whence by following the strategy we can construct a model where $p$ holds and $\sigma=0$ is true.
\end{proof}

\subsection{Locally universal models revisited}

As pointed out in the introduction, if $T$ has JEP and $A$ is an e.c. model of $T$, then $A$ is a locally universal model of $T$.  Since being an e.c. model of $T_\forall$ is enforceable, it follows that if $T$ has JEP and $B$ is a separable model of $T$ such that being $B^\u$-embeddable is enforceable, then $B$ is a locally universal model of $T$.  However, it turns out that the conclusion of the following sentence is true even without assuming JEP.

\begin{thm}\label{enforcelocallyuniversal}
Suppose that $B$ is a model of $T$ such that being $B^\u$-embeddable is enforceable.  Then $B$ is a locally universal model of $T$.
\end{thm}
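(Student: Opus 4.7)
The plan is to reduce the problem to showing $\Th_\forall(B)\subseteq T_\forall$. Once this inclusion is established, local universality follows immediately: for any separable $A\models T$ we have $A\models T_\forall \supseteq \Th_\forall(B)$, so $A$ models the universal theory of $B$, which is equivalent to saying that $A$ embeds into some ultrapower of $B$. Separability of $A$ and $B$ then allows us to take this ultrapower to be $B^\u$.

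Fix, then, a universal $L$-sentence $\sigma=\sup_x\phi(x)$ (with $\phi$ quantifier-free) such that $\sigma^B=0$. The key observation is that universal formulas are monotone under extensions, so any $L$-structure $M$ that embeds into $B^\u$ satisfies $\sigma^M\leq \sigma^{B^\u}=\sigma^B=0$, and hence $\sigma^M=0$. Consequently, any strategy for $\exists$ that enforces ``the compiled structure is $B^\u$-embeddable'' simultaneously enforces $\sigma=0$ in the compiled structure, so the closed condition $\sigma=0$ belongs to $T^f$.

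To conclude, I would invoke Corollary \ref{mutual}: since $\sigma=0$ is itself a universal closed condition that belongs to $T^f$, it lies in $(T^f)_\forall=T_\forall$. Thus every universal $\sigma$ with $\sigma^B=0$ is a consequence of $T$, yielding $\Th_\forall(B)\subseteq T_\forall$ as required.

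The main obstacle is bridging the purely game-theoretic hypothesis and the semantic conclusion about embeddings; I expect the two ingredients pulling this off to be monotonicity of universal sentences under extensions (which upgrades $B^\u$-embeddability of the compiled structure into a universal sentence holding in it) and Corollary \ref{mutual} (which transfers enforceable universal conditions back into $T_\forall$). Notably, JEP is not needed for this argument, which is precisely what makes the theorem strictly stronger than the JEP-based remark preceding the statement.
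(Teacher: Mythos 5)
Your proof is correct, and it takes a somewhat different route from the paper's. The paper argues directly: given a separable $A\models T$, each condition $p\subseteq\appdiag(A)$ is a legal opening move for $\forall$, so by following the strategy that enforces $B^\u$-embeddability, $p$ is satisfied in (an expansion of) $B^\u$; countable saturation of $B^\u$ then embeds $A$. You instead work at the level of sentences: monotonicity of universal sentences under embeddings shows that every universal $\sigma$ with $\sigma^B=0$ is enforceable, hence lies in $T^f$, and Corollary \ref{mutual} (which indeed does not use JEP) pulls it into $(T^f)_\forall=T_\forall$; the standard equivalence ``$A\models \Th_\forall(B)$ iff $A$ embeds into an ultrapower of $B$'' (with separability putting the ultrafilter on $\n$) finishes. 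The two arguments are close relatives --- the proof of Corollary \ref{mutual} contains exactly the paper's game move of treating a condition from an approximate diagram as $\forall$'s first play --- so the game-theoretic core is the same, just applied once and for all inside Corollary \ref{mutual} rather than to the particular model $A$. What your packaging buys is the explicit identity $\Th_\forall(B)=T_\forall$ (the reverse inclusion being trivial since $B\models T$), which is a clean reformulation of local universality; what the paper's version buys is brevity and self-containedness, needing only saturation of $B^\u$ and the hypothesis, with no detour through $T^f$ or the universal-theory characterization of ultrapower embeddability.
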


\begin{proof}
Suppose that $A$ is a separable model of $T$.  By saturation, it suffices to show, given any condition $p\subseteq \appdiag(A)$, that $B^\u$ has an expansion to an $L(C)$-structure that is a model of $p$.  Viewing $p$ as $\forall$'s first move, by following $\exists$'s strategy to ensure that the compiled structure is $B^\u$-embeddable, it follows that $p$ can be satisfied in $B^\u$, as desired.
\end{proof}

\subsection{Connection to weak forcing}

Model-theoretic forcing has already appeared in continuous logic in many places, the first being \cite{BI}.  The purpose of this section is to connect the above forcing theory with that already appearing in the literature.  As alluded to in \cite[Historical Reference for Chapter 2]{hodges}, the forcing associated with games is the same as what is traditionally referred to as weak forcing.\footnote{Unfortunately, this helpful remark is quite hidden in this section.  In fact, Hodges simply writes ``(Our forcing is weak.)''}  The purpose of this subsection is to show that, in fact, the function $F_p^g$ defined above coincides with the corresponding function $F_p^w$ for weak forcing appearing in \cite{BI}.

Let us first review the setup from \cite{BI}.  If $p$ is a condition and $\varphi$ is a restricted atomic $L(C)$-sentence, we define $f_p(\varphi):=\min\{r\leq 1 \ | \varphi<r\in p\}$, with the understanding that $\min(\emptyset)=1$.  For a condition $p$ and a restricted $L(C)_{\omega_1,\omega}$-sentence $\varphi$, we define the value $F_p(\varphi)\in [0,1]$ by induction on $\varphi$:
\begin{itemize}
\item $F_p(\varphi)=f_p(\varphi)$ if $\varphi$ is atomic.
\item $F_p(\neg \varphi)=\neg \inf_{q\supseteq p}F_q(\varphi)$.
\item $F_p(\frac{1}{2}\varphi)=\frac{1}{2}F_p(\varphi)$.
\item $F_p(\varphi\dotplus \psi)=F_p(\varphi)\dotplus F_p(\psi)$.
\item $F_p(\bigvee \Phi)=\inf_{\varphi\in \Phi}F_p(\varphi)$.
\item $F_p(\inf_x\varphi(x))=\inf_{c\in C}F_p(\varphi(c))$.
\end{itemize}

If $r\in \r$ and $F_p(\varphi)<r$, we say that $p$ \emph{(strongly) forces} that $\varphi<r$, and write $p\Vdash \varphi<r$.

We can now define the weak forcing relation.

\begin{df}
For a condition $p$ and a restricted $L(C)_{\omega_1,\omega}$-sentence $\varphi$, we set $$F_p^w(\varphi)=\sup_{q\supseteq p}\inf_{q'\supseteq q}F_{q'}(\varphi).$$  If $r\in \r$ and $F_p^w(\varphi)<r$, we say that $p$ \emph{weakly forces} that $\varphi<r$, and write $p\Vdash^w \varphi<r$.
\end{df}

The following facts are Lemma 2.8 and Proposition 2.9 from \cite{BI} respectively.

\begin{fact}
For a condition $p$ and a restricted $L(C)_{\omega_1,\omega}$-sentence $\varphi$, we have $$F_p^w(\varphi)=\sup_{q\supseteq p}\inf_{q'\supseteq q}F_{q'}^w(\varphi).$$  
\end{fact}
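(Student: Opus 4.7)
The plan is to think of the claim as an idempotency statement for the ``two-step sup-inf'' operator that takes a function on conditions to its weak-forcing value. Concretely, set $H(q):=F_q(\varphi)$, so that by definition $F_p^w(\varphi)=\sup_{q\supseteq p}\inf_{q'\supseteq q}H(q')$, and the claim is that applying the same outer operator to $H^w:=F_\cdot^w(\varphi)$ recovers $F_p^w(\varphi)$.

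The preliminary step is to establish that $F_p(\varphi)$ is \emph{non-increasing} in $p$ (with respect to inclusion), by induction on the complexity of the restricted $L(C)_{\omega_1,\omega}$-sentence $\varphi$. For atomic $\varphi$, $f_p(\varphi)=\min\{r\leq 1\mid \varphi<r\in p\}$ is a minimum over a set that grows with $p$. The inductive cases for $\frac{1}{2}\psi$, $\psi\dotplus\theta$, $\bigvee\Phi$ and $\inf_x\psi(x)$ preserve this monotonicity trivially; for $\neg\psi$ one uses that $F_p(\neg\psi)=1-\inf_{q\supseteq p}F_q(\psi)$ and that the inner infimum is taken over a smaller family when $p$ grows. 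From this it follows immediately that $F_p^w(\varphi)$ is also non-increasing in $p$, since the outer supremum in its definition is over a smaller family as $p$ grows.

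With monotonicity in hand, one inequality is almost free: taking $q'=q$ in the inner infimum,
$$\sup_{q\supseteq p}\inf_{q'\supseteq q}F_{q'}^w(\varphi)\;\leq\;\sup_{q\supseteq p}F_q^w(\varphi)\;\leq\;F_p^w(\varphi),$$
where the last step uses that $F_\cdot^w(\varphi)$ is non-increasing. For the reverse inequality, let $s$ denote the right-hand side of the claimed identity and fix $\epsilon>0$. Given any $q\supseteq p$, since $\inf_{q'\supseteq q}F_{q'}^w(\varphi)\leq s$, there is $q'\supseteq q$ with $F_{q'}^w(\varphi)<s+\epsilon$. Unpacking the definition, $\sup_{r\supseteq q'}\inf_{r'\supseteq r}F_{r'}(\varphi)<s+\epsilon$, so in particular $\inf_{r'\supseteq q'}F_{r'}(\varphi)<s+\epsilon$, and therefore there exists $r'\supseteq q'\supseteq q$ with $F_{r'}(\varphi)<s+\epsilon$. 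Hence $\inf_{r'\supseteq q}F_{r'}(\varphi)<s+\epsilon$ for every $q\supseteq p$, giving $F_p^w(\varphi)\leq s+\epsilon$; letting $\epsilon\to 0$ finishes the proof.

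There is really no substantive obstacle here: the entire argument is a general fact about the operator $G\mapsto G^w(p):=\sup_{q\supseteq p}\inf_{q'\supseteq q}G(q')$ applied to a non-increasing function $G$ on a poset, and nothing beyond the monotonicity input and some care with the interplay of suprema and infima is used. The only step where one must pause is the monotonicity induction for $F_p(\varphi)$, and there the only subtle clause is the $\neg\psi$ case, where the reversal of inequalities is absorbed correctly because the inner infimum is itself non-decreasing in $p$.
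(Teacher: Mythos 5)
Your proof is correct, but note that the paper itself gives no proof of this statement: it is quoted verbatim as Lemma 2.8 of Ben Yaacov--Iovino (the reference \cite{BI}), so there is no in-paper argument to compare against, and what you have written is a self-contained verification. Two structural remarks. First, your preliminary induction showing that $F_p(\varphi)$ is non-increasing in $p$ (which is correct, including the $\neg$ clause) is never actually used: the only monotonicity your first inequality needs is $F_q^w(\varphi)\leq F_p^w(\varphi)$ for $q\supseteq p$, and, as your own parenthetical justification shows, this is automatic because the quantity $\inf_{q'\supseteq r}F_{q'}(\varphi)$ depends only on $r$, so the supremum defining $F_q^w(\varphi)$ is taken over a subfamily of the conditions extending $p$. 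Your second inequality likewise uses nothing but the definitions (pick $q'\supseteq q$ with $F_{q'}^w(\varphi)<s+\epsilon$, then $r'\supseteq q'$ with $F_{r'}(\varphi)<s+\epsilon$, and observe $r'\supseteq q$). So the whole statement is, as you note at the end, a purely order-theoretic idempotency property of the operator sending a $[0,1]$-valued function $G$ on conditions to $p\mapsto \sup_{q\supseteq p}\inf_{q'\supseteq q}G(q')$, valid with no hypotheses on $G$; the induction on the complexity of $\varphi$ can simply be deleted. It is also worth observing that the paper does prove the analogous identity for game forcing, Lemma \ref{twostep}, but by a different, strategy-composition argument; your argument is the abstract order-theoretic counterpart and would apply verbatim there once one knows $F_p^g$ is computed by such a sup-inf expression.
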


\begin{fact}
$F_p^w$ satisfies the following inductive rules.
\begin{itemize}
\item $F_p^w(\neg \varphi)=\neg \inf_{q\supseteq p}F_q^w(\varphi)$.
\item $F_p^w(\frac{1}{2}\varphi)=\frac{1}{2}F_q^w(\varphi)$.
\item $F_p^w(\varphi\dotplus\psi)=\sup_{q\supseteq p}\inf_{q'\supseteq q}F_{q'}^w(\varphi)\dotplus F_{q'}^w(\psi)$.
\item $F_p^w(\bigvee \Phi)=\sup_{q\supseteq p}\inf_{q'\supseteq q}\inf_{\varphi\in \Phi}F_{q'}^w(\varphi)$.
\item $F_p^w(\inf_x\varphi(x))=\sup_{q\supseteq p}\inf_{q'\supseteq q}\inf_{c\in C}F_{q'}^w(\varphi(c))$.
\end{itemize}
\end{fact}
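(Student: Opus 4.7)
The plan is to prove the inductive rules by induction on the complexity of the restricted $L(C)_{\omega_1,\omega}$-sentence $\varphi$, the atomic base case being vacuous since no connective applies. A useful preliminary observation I would establish first is the antitonicity of $F$ in the condition: if $q\supseteq p$ then $F_q(\varphi)\le F_p(\varphi)$ (by inspection of the inductive definition of $F$). This immediately yields the pointwise comparison $F_p^w(\varphi)\le F_p(\varphi)$, which will be the workhorse for the easy direction of the harder cases below.

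For the unary connectives $\neg$ and $\tfrac{1}{2}$, I would argue directly from the definition $F_p^w(\psi)=\sup_{q\supseteq p}\inf_{q'\supseteq q}F_{q'}(\psi)$ together with the inductive rules for $F$. For $\neg\varphi$, plugging in $F_{q'}(\neg\varphi)=\neg\inf_{r\supseteq q'}F_r(\varphi)$ and using that $\neg$ swaps $\sup$ and $\inf$, the resulting nested expression, after a harmless renaming of bound variables, matches $\neg\inf_{q\supseteq p}F_q^w(\varphi)$. For $\tfrac{1}{2}\varphi$, the identity is immediate since $\tfrac{1}{2}$ commutes with both $\sup$ and $\inf$.

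The substantive cases are $\varphi\dotplus\psi$, $\bigvee\Phi$, and $\inf_x\psi(x)$; I will focus on $\dotplus$, as the others are entirely analogous. One direction is easy: the pointwise bound $F_{q'}^w(\cdot)\le F_{q'}(\cdot)$ yields
\[
\sup_{q\supseteq p}\inf_{q'\supseteq q}\bigl(F_{q'}^w(\varphi)\dotplus F_{q'}^w(\psi)\bigr)\le \sup_{q\supseteq p}\inf_{q'\supseteq q}\bigl(F_{q'}(\varphi)\dotplus F_{q'}(\psi)\bigr)=F_p^w(\varphi\dotplus\psi)
\]
after unfolding the right-hand side via the strong-forcing rule for $\dotplus$. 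For the reverse inequality I would invoke the previous Fact (the idempotence of weak forcing) applied to $\varphi\dotplus\psi$, rewriting $F_p^w(\varphi\dotplus\psi)=\sup_{q\supseteq p}\inf_{q'\supseteq q}F_{q'}^w(\varphi\dotplus\psi)$, and then relate $F_{q'}^w(\varphi\dotplus\psi)$ to $F_{q'}^w(\varphi)\dotplus F_{q'}^w(\psi)$ by another application of the strong-forcing rule combined with a nested approximation along extensions of $q'$.

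The main obstacle I anticipate lies precisely in this last step in the $\dotplus$ case (and similarly for $\bigvee$ and $\inf_x$): I need to approximate $F_{q'}^w(\varphi)$ and $F_{q'}^w(\psi)$ \emph{simultaneously} along a single chain of extensions of $q'$, but the natural witnessing extensions coming from the idempotence of $F^w$ for $\varphi$ and for $\psi$ separately may live in incompatible extensions. The remedy is to iterate the $\sup\inf$ operator and exploit the cofinality afforded by the previous Fact to collapse a doubly nested $\sup\inf\sup\inf$ into a single $\sup\inf$, producing a common refinement that controls both summands at once. Once this technical maneuver is carried out, the $\sup\inf$ expressions on the two sides of the proposed identity align, completing the induction.
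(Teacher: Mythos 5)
The paper does not prove this Fact at all: it is imported verbatim (together with the preceding one) as Proposition 2.9 of Ben Yaacov--Iovino \cite{BI}, so there is no in-paper argument to compare yours against; I can only judge your reconstruction on its own merits, and in outline it is correct. You have isolated the one genuinely delicate case ($\dotplus$), and your preliminaries (antitonicity of $F$ in the condition, hence $F_p^w\le F_p$) and the easy directions are right, as are the purely formal $\neg$ and $\tfrac12$ cases. Two sharpenings would make the hard direction airtight. First, neither the induction nor the appeal to the idempotence Fact is actually needed: each rule follows directly from one unfolding of the definition of $F$ on the compound sentence together with the definition of $F^w$ and antitonicity. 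Second, ``common refinement'' must be read as \emph{nested} extension, since extensions of a condition are not directed (two extensions of $q'$ can be jointly unsatisfiable), so one cannot refine two independently chosen witnesses; the precise maneuver is: given $q'\supseteq q$ and $\epsilon>0$, pick $s_1\supseteq q'$ with $F_{s_1}(\varphi)<F_{q'}^w(\varphi)+\epsilon$ (possible because $F_{q'}^w(\varphi)\ge\inf_{s'\supseteq q'}F_{s'}(\varphi)$), then pick $s_2\supseteq s_1$ with $F_{s_2}(\psi)<F_{q'}^w(\psi)+\epsilon$ (possible because the supremum defining $F_{q'}^w(\psi)$ includes $s=s_1$), and use antitonicity of $F$ -- which you earmarked only for the easy direction -- to keep $F_{s_2}(\varphi)\le F_{s_1}(\varphi)$. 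This gives $\inf_{r\supseteq q}\bigl(F_r(\varphi)\dotplus F_r(\psi)\bigr)\le F_{q'}^w(\varphi)\dotplus F_{q'}^w(\psi)+2\epsilon$ for every $q'\supseteq q$, so with $F^w\le F$ the two inner infima coincide for each $q$, and taking the supremum over $q\supseteq p$ yields the rule; the ``cofinality'' you attribute to the previous Fact is really just this feature of the definition of $F^w$ plus antitonicity. Finally, the $\bigvee\Phi$ and $\inf_x$ cases are not analogous to $\dotplus$ but strictly easier: an infimum of infima needs no common witness, since $F_{q'}^w(\varphi)\ge\inf_{r\supseteq q}\inf_{\varphi\in\Phi}F_r(\varphi)$ holds for each $\varphi\in\Phi$ separately.
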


The following is the main result of this subsection; it says that game forcing and weak forcing are the same.
\begin{thm}\label{coincides}
For all conditions $p$ and restricted $L(C)_{\omega_1,\omega}$-sentences $\varphi$, we have $F_p^g(\varphi)=F_p^w(\varphi)$.
\end{thm}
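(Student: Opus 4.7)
The plan is to prove $F_p^g(\varphi) = F_p^w(\varphi)$ by induction on the complexity of the restricted $L(C)_{\omega_1,\omega}$-sentence $\varphi$, with the induction covering all conditions $p$ simultaneously. The strategy is to show that $F^g$ satisfies the same inductive rules as $F^w$ recorded in the second Fact, together with agreement on atomic formulas.

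For the atomic base case, I would observe that any condition $q'$ containing a clause ``$\varphi < r'$'' forces the compiled structure to satisfy $\varphi < r'$, since atomic sentences are pinned down by the conditions appearing in $\bar p$. This yields $F_{q'}^g(\varphi) \leq f_{q'}(\varphi)$, and combining with Lemma \ref{twostep} gives $F_p^g(\varphi) \leq F_p^w(\varphi)$. Conversely, if $F_p^w(\varphi) < r$, then unwinding the $\sup_q\inf_{q'}$ characterization of $F^w$ shows that for every $\forall$-move $q \supseteq p$ there exists $q' \supseteq q$ with $f_{q'}(\varphi) < r$; $\exists$ plays this $q'$, establishing $F_p^g(\varphi) \leq r$.

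For the inductive step, I would verify each rule from the second Fact for $F^g$. The $\frac{1}{2}$ case is immediate since $\frac{1}{2}\varphi < r$ iff $\varphi < 2r$. The $\dotplus$, $\bigvee$, and $\inf_x$ rules all have the $\sup_q \inf_{q'}$ form, matching the two-step formula of Lemma \ref{twostep}; the verifications reduce to checking pointwise relationships, using that the compiled structure is canonical (so $\inf_x \varphi(x)$ coincides with $\inf_{c \in C} \varphi(c)$), that $(\bigvee \Phi)^{A^+} = \inf_\varphi \varphi^{A^+}$, and Lemma \ref{existentialforcing} for witness extraction in the $\inf_x$ case.

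The main obstacle is the $\neg$ case: proving $F_p^g(\neg \psi) = 1 - \inf_{q \supseteq p} F_q^g(\psi)$. The $\geq$ direction is easy, since no $q \supseteq p$ can simultaneously force $\psi > 1 - r$ and $\psi < s$ with $s \leq 1 - r$ (the compiled structure cannot satisfy both strict inequalities). For the $\leq$ direction, setting $\alpha = \inf_q F_q^g(\psi)$ and given $\epsilon > 0$, I want $\exists$ to force $\psi > \alpha - \epsilon$ from $p$; the natural strategy is for $\exists$ to extend each $\forall$-move $q \supseteq p$ by appending $\neg \psi < 1 - \alpha + \epsilon$. The nontrivial verification is that this extension remains satisfiable with $T$; the plan is to argue by contradiction, showing that if $T \cup q \models \psi \leq \alpha - \epsilon$, then combining the induction hypothesis (which equates $F^g$ and $F^w$ on the lower-complexity $\psi$) with the semantic content of weak forcing yields $F_q^g(\psi) < \alpha$, contradicting the defining assumption $F_q^g(\psi) \geq \alpha$.
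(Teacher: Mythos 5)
Your overall plan (induct on complexity and show that $F^g$ obeys the recursion that the second Fact records for $F^w$, plus agreement on atomic sentences) is viable, and your $\geq$ direction for negation is fine, but as written there are two gaps. The smaller one is in the atomic case: both halves of your argument prove the same inequality $F_p^g(\varphi)\leq F_p^w(\varphi)$ (your ``conversely'' again concludes $F_p^g(\varphi)\leq r$ from $F_p^w(\varphi)<r$), so the direction $F_p^w(\varphi)\leq F_p^g(\varphi)$ is missing. The paper gets it by noting that if $p\Vdash^g \varphi<r$ then for any $q\supseteq p$ the set $q\cup\{\varphi<s\}$ is a condition for some $s<r$ (the compiled structure along any definitive continuation agrees with some model of $T\cup\bar p$ on atomic values), which makes $F_{q'}(\varphi)$ small and then Lemma \ref{twostep} applies.

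The serious gap is exactly where you expected it, in the $\leq$ direction of the negation case, and your proposed fix does not work. In the induction $\psi$ is an arbitrary restricted $L(C)_{\omega_1,\omega}$-sentence, so the clause $\neg\psi<1-\alpha+\epsilon$ is in general not a legal entry of a condition: conditions may contain only quantifier-free finitary restricted sentences, whereas $\psi$ may involve $\inf_x$, $\bigvee$ and nested negations. Moreover, even where such a clause is legal, membership in $\bar p$ only controls the compiled structure's quantifier-free values; since the compiled structure embeds into every model of $T\cup\bar p$, bounds on quantified subformulas that hold in all models of $T\cup q$ need not hold in the compiled structure unless $\exists$ actively supplies witnesses, so your contradiction step ``$T\cup q\models\psi\leq\alpha-\epsilon$ implies $F_q^g(\psi)<\alpha$'' is an unproven transfer principle rather than a consequence of the induction hypothesis. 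The missing tool is the approximation Proposition at the start of the subsection on the finite forcing companion: every condition extends to one forcing $\psi$ into an interval of length less than $\epsilon$. With it the step is immediate: given $q\supseteq p$, take $q'\supseteq q$ with $q'\Vdash^g a<\psi<b$ and $b-a<\epsilon$; since $\alpha\leq F_{q'}^g(\psi)\leq b$ we get $a>\alpha-\epsilon$, hence $q'\Vdash^g \neg\psi<1-\alpha+\epsilon$, and Lemma \ref{twostep} finishes. The same device is also needed for the $\geq$ direction of your $\dotplus$ rule (forcing the truncated sum to be small says nothing about how the value splits between the two summands), which you dismiss as a pointwise check; this interval-forcing argument is precisely how the paper handles both the $\neg$ and $\dotplus$ cases.
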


\begin{proof}
We proceed by induction on the complexity of $\varphi$.

First suppose that $\varphi$ is atomic.  Fix $\epsilon>0$ and choose $r$ such that $r<F_p^g(\varphi)+\epsilon$ and $p\Vdash^g \varphi<r$.  Fix $q\supseteq p$, so $q\Vdash^g \varphi<r$.  In particular there is $s<r$ such that $q':=q\cup \{\varphi<s\}$ is consistent.  Clearly $q'\Vdash^w \varphi<s$ so $F_p^w(\varphi)\leq r<F_p^g(\varphi)+\epsilon$; letting $\epsilon$ approach $0$, we see that $F_p^w(\varphi)\leq F_p^g(\varphi)$.  Conversely, if $p\Vdash^w \varphi<r$, then for every $q\supseteq p$, there is $q'\supseteq q$ such that $q'\Vdash \varphi<r$, whence $\varphi<s$ belongs to $q'$ for some $s<r$ and thus $q'\Vdash^g \varphi<s$ and $F_{q'}^g(\varphi)<s$.  By Lemma \ref{twostep}, we have that $F_p^g(\varphi)\leq r$ and thus $F_p^g(\varphi)\leq F_p^w(\varphi)$.

Now suppose that $\varphi=\neg \psi$.  First suppose that $p\Vdash^g \varphi<r$.  Take $q\supseteq p$.  Then $q\not\Vdash^g \psi<1-r$, so $F_q^g(\psi)\geq 1-r$ whence $F_q^w(\psi)\geq 1-r$ by induction.  Therefore, 
$$F_p^w(\varphi)=1\dotminus \inf_{q\supseteq p}F_q^w(\psi)\leq 1-(1-r)=r$$ and thus $F_p^w(\varphi)\leq F_p^g(\varphi)$.  Now suppose that $p\Vdash^w \varphi<r$ and fix $q\supseteq p$ and $\epsilon>0$; it suffices to find $q'\supseteq q$ such that $q'\Vdash^g \varphi<r+\epsilon$, for then, by Lemma \ref{twostep}, we have that $F_p^g(\varphi)\leq r+\epsilon$ and thus, letting $\epsilon$ approach $0$, we have that $F_p^g(\varphi)\leq F_p^w(\varphi)$.  Take $q'\supseteq q$ such that $q'\Vdash^g a<\psi<b$ with $b-a<\epsilon$.  Since $p\Vdash^w \varphi<r$, we have that $F_{q'}^w(\psi)\geq 1-r$, whence $1-r<b$ by the induction hypothesis.  It follows that $q'\Vdash^g \varphi<1- a<1-b+\epsilon<r+\epsilon$, as desired.

The case that $\varphi=\frac{1}{2}\psi$ is easy.  Now suppose that $\varphi=\psi\dotplus\theta$.  We first show that $F_p^w(\varphi)\leq F_p^g(\varphi)$.  Suppose $p\Vdash^g \varphi<r$.  Take $q\supseteq p$; it suffices to show that $\inf_{q'\supseteq q}(F_{q'}^w(\psi)\dotplus F_{q'}^w(\theta))\leq r$.  Fix $\epsilon>0$ and take $q'\supseteq q$ such that $q'\Vdash^g a-\epsilon<\psi<a+\epsilon$ and $q'\Vdash^g b-\epsilon<\theta<b+\epsilon$.  It follows that $a+b-2\epsilon<r$ and that (by induction) $F_{q'}^w(\psi)\leq a+\epsilon$ and $F_{q'}^w(\theta)\leq b+\epsilon$, so $F_{q'}^w(\psi)\dotplus F_{q'}^w(\theta)\leq a+b+2\epsilon<r+4\epsilon$; letting $\epsilon$ approach $0$ yields the desired result.  Now suppose that $p\Vdash^w \varphi<r$.  Fix $q\supseteq p$.  Take $q'\supseteq q$ such that (by induction) $F_{q'}^g(\psi)\dotplus F_{q'}^g(\theta)<r$.  Then there are $s,t$ such that $q'\Vdash^g \psi<s$ and $q'\Vdash^g \theta<t$ and $s+t<r$.  It follows that $q'\Vdash^g \varphi<r$, whence $F_p^g(\varphi)<r$.

Now suppose that $\varphi=\bigvee \varphi_i$.  First suppose that $F_p^w(\varphi)<r$.  Fix $q\supseteq p$ and find $q'\supseteq q$ and $i$ such that $F_{q'}^w(\varphi_i)<r$, whence $F_{q'}^g(\varphi_i)<r$ and thus $q'\Vdash^g \varphi_i<r$ and hence $q'\Vdash^g \varphi<r$.  It follows that $F_p^g(\varphi)\leq F_p^w(\varphi)$.  Now suppose that $F_p^g(\varphi)<r$.  Fix $q\supseteq p$.  Then there is $i$ such that $q\not\Vdash^g \neg\varphi_i\leq 1-r$, whence $q\not\Vdash^w \neg \varphi_i\leq 1-r$, and thus there is $q'\supseteq q$ such that $q'\Vdash^w \varphi_i<r$.  It follows that $p\Vdash^w \varphi<r$.

Finally suppose that $\varphi=\inf_x \psi(x)$.  Since it is enforceable that the compiled structure is canonical, we have that $p\Vdash^g \varphi<r$ if and only if $p\Vdash^g \bigvee_{c\in C}\psi(c)<r$; now use the previous case.
\end{proof}

\section{Finite-generic structures}

In this section, we once again fix an $L$-theory $T$ and forcing is with respect to this theory.  

\subsection{Introducing finite-generic structures}  Suppose that $B^+$ is a canonical $L(C)$-structure.  Given an $L(C)_{\omega_1,\omega}$-sentence $\varphi$ and $r\in \r^{>0}$, we say $B^+$ forces $\varphi<r$, written $B^+\Vdash^g \varphi<r$, if there is a condition $p\subseteq \appdiag(B^+)$ with $p\Vdash^g \varphi<r$.  In general, whether a structure forces the expression $\varphi<r$ (for $\varphi$ finitary) is not the same as whether or not $\varphi^{B^+}<r$ is true.  These notions coincide for a very important class of structures: 

\begin{df}
We say that a canonical $L(C)^+$-structure is \emph{finite-generic$^+$} if, for any (finitary) $L(C)$-sentence $\varphi$ and any $r\in \r^{>0}$, we have
$$B^+\Vdash^g \varphi<r \Leftrightarrow B^+\models \varphi<r.$$
\end{df}

We leave the following lemma to the reader:

\begin{lemma}
Suppose that $B^+$ is a canonical $L(C)$-structure.  Then $B^+$ is finite-generic$^+$ if and only if for every $L(C)$-sentence $\sigma$ and every $\epsilon>0$, if $\sigma^{B^+}=r$, then $B^+\Vdash^g |\sigma-r|<\epsilon$.
\end{lemma}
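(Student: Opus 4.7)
The plan is to verify each direction of the stated equivalence separately. The forward direction is an immediate consequence of applying the definition of finite-genericity$^+$ to the sentence $|\sigma-r|$: since $|\sigma-r|^{B^+}=0<\epsilon$, we have $B^+\models|\sigma-r|<\epsilon$, and finite-genericity$^+$ then yields $B^+\Vdash^g|\sigma-r|<\epsilon$.

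For the backward direction, I assume the approximation hypothesis and fix a finitary $L(C)$-sentence $\varphi$ together with $r\in\r^{>0}$; the task is to verify both implications of the biconditional defining finite-genericity$^+$. For $B^+\models\varphi<r\Rightarrow B^+\Vdash^g\varphi<r$: set $s:=\varphi^{B^+}<r$ and $\epsilon:=(r-s)/2$; the hypothesis produces a condition $p\subseteq\appdiag(B^+)$ forcing $|\varphi-s|<\epsilon$, and since that property implies the weaker property $\varphi<s+\epsilon<r$, the same witnessing condition also forces $\varphi<r$, giving $B^+\Vdash^g\varphi<r$.

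For the converse $B^+\Vdash^g\varphi<r\Rightarrow B^+\models\varphi<r$, I argue by contradiction. Suppose $p\subseteq\appdiag(B^+)$ witnesses $B^+\Vdash^g\varphi<r$ while $s:=\varphi^{B^+}\geq r$. Choosing $\epsilon>0$ small enough that $s-\epsilon>r$, the hypothesis yields a condition $q\subseteq\appdiag(B^+)$ with $q\Vdash^g|\varphi-s|<\epsilon$, whence $q\Vdash^g\varphi>r$. Since $p$ and $q$ both sit inside $\appdiag(B^+)$, the union $p\cup q\subseteq\appdiag(B^+)$ is again a condition, and by Lemma \ref{properties}(3) together with the Conjunction Lemma it forces the conjunction $\varphi<r\wedge\varphi>r$. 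This conjunction is satisfied by no structure, so $\exists$ cannot have a winning strategy compiling such a structure, giving the desired contradiction.

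The main obstacle is the boundary subcase $s=r$, where the choice of $\epsilon>0$ with $s-\epsilon>r$ is unavailable. The remedy I have in mind is to first show, using the narrow-interval Proposition applied to $p$ in concert with hypothesis-based extensions inside $\appdiag(B^+)$, that any game-forcing $B^+\Vdash^g\varphi<r$ can be sharpened to $B^+\Vdash^g\varphi<r'$ for some $r'<r$; once this sharpening is in hand, the argument above with $r'$ in place of $r$ gives the contradiction and completes the proof.
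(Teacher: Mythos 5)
Your forward direction and the implication $B^+\models\varphi<r\Rightarrow B^+\Vdash^g\varphi<r$ are fine, but the remaining implication has two genuine gaps. First, the assertion that ``since $p$ and $q$ both sit inside $\appdiag(B^+)$, the union $p\cup q$ is again a condition'' is not justified: a condition must be jointly satisfiable with $T$, and $B^+$ is only assumed to be a canonical $L(C)$-structure, not a model of $T$ or even of $T_\forall$, so membership in $\appdiag(B^+)$ by itself gives no $T$-satisfiability of the union (incompatible conditions certainly exist in general). This step is repairable, but it needs an argument: using the stated hypothesis together with Proposition \ref{enforceuniversal} and the Conjunction Lemma one first shows $B\models T_\forall$ (if $\sigma=0$ lies in $T_\forall$ and $\sigma^{B^+}=u$, then some condition in $\appdiag(B^+)$ forces $\sigma>u-\epsilon$ while every condition forces $\sigma<\delta$, and playing a game along the conjoined strategy yields $u=0$); hence $B$ embeds into a model of $T$ and every finite subset of $\appdiag(B^+)$ is a condition. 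As written, this ingredient is missing and your one-line justification is false as stated.

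The more serious problem is the boundary case $\varphi^{B^+}=r$, which you correctly flag but do not resolve; the proposed remedy is circular. Indeed, once unions of conditions from $\appdiag(B^+)$ are known to be conditions, your hypothesis produces $q\subseteq\appdiag(B^+)$ forcing $\varphi>r-\epsilon$ for every $\epsilon>0$, so no condition in $\appdiag(B^+)$ can force $\varphi<r'$ for any $r'<r$ unless $\varphi^{B^+}<r$ already holds: the ``sharpening'' $B^+\Vdash^g\varphi<r'$ is available exactly when the conclusion you are trying to prove is true, and is unavailable in the only case where you need it. Nor does the narrow-interval proposition deliver it, since game-forcing of the property $\varphi<r$ carries no built-in slack (the condition $\{\varphi<r\}$ forces $\varphi<r$ but forces $\varphi<r'$ for no $r'<r$); the approximation argument only yields $\varphi^{B^+}\leq r$. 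So the case $\varphi^{B^+}=r$ is the real content of this implication and remains unproved. One honest way out is to read $B^+\Vdash^g\varphi<r$, in accordance with the identification of game forcing with weak forcing, as $F^g_p(\varphi)<r$ for some condition $p\subseteq\appdiag(B^+)$; with that reading the slack is automatic, the case split disappears, and your main argument (with the first gap repaired) goes through, whereas with the literal property-forcing reading you must argue the boundary case directly.
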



\begin{df}
An $L$-structure will be called \emph{finite-generic} if it is the $L$-reduct of a finite-generic$^+$ $L(C)$-structure.
\end{df}

If we want to emphasize the base theory $T$, we shall say that $B$ is finite-generic with respect to $T$.

\begin{rmk}
If one compares our definition of finite-generic$^+$ with the corresponding classical definition (\cite[Section 4.3]{hodges}), it seems as if we should demand that finite-generic$^+$ structures be \emph{extra canonical}.  It does appear that this leads to a more restrictive notion of finite-generic$^+$-structures, but we invite the reader to check that this leaves the class of reducts (i.e. finite-generic structures) unchanged.  (Simply add a new set of countably many constants.)  It seems that Hodges prefers the more restrictive notion to make certain proofs easier but we note that this is not at all necessary.
\end{rmk}

Let us next show how this notion is the same as the one presented in the continuous logic literature using generic sets of conditions.

\begin{df}
Let $G$ be a nonempty set of conditions.  We say that $G$ is \emph{generic} if:
\begin{itemize}
\item the union of two elements of $G$ is once again an element of $G$, and 
\item for every restricted $L(C)$-sentence $\varphi$ and every $r>1$, there is $p\in G$ such that $F_p(\varphi)+F_p(\neg \varphi)<r$.
\end{itemize}
\end{df}

It is proven in \cite{BI} that generic sets always exist.  If $G$ is generic and $\varphi$ is a restricted $L(C)$-sentence, set $\varphi^G:=\inf_{p\in G}F_p(\varphi)$.  \cite[Lemma 2.13]{BI} asserts that $\varphi^G=\inf_{p\in G}F_p^w(\varphi)$.

The following fact combines Lemma 2.16 and Theorem 2.17 from \cite{BI}.

\begin{fact}[Generic Model Theorem]
Let $M_0^G$ denote the term algebra equipped with the natural interpretation of the function symbols and interpreting the predicate symbols by $P^{M_0^G}(\vec \tau):=P(\vec \tau)^G$.  Let $M^G$ be the completion of $M_0^G$.  Then $M^G$ is a canonical $L(C)$-structure such that, for all restricted $L(C)$-sentences $\varphi$, we have $\varphi^{M^G}=\varphi^G$.
\end{fact}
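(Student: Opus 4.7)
The plan is to prove the theorem by induction on the complexity of the restricted $L(C)_{\omega_1,\omega}$-sentence $\varphi$, using the characterization $\varphi^G=\inf_{p\in G}F_p^w(\varphi)$ (the cited \cite[Lemma 2.13]{BI}) together with the inductive rules for $F_p^w$ recalled in the second Fact. Before running that induction, one must verify that $M^G$ is actually a well-defined canonical $L(C)$-structure. Because every $p\in G$ is, by definition, satisfiable in some $L$-structure, the atomic values $f_p$ respect the pseudometric axioms and the uniform continuity moduli of all function and predicate symbols. Since $G$ is directed ($p,q\in G$ implies $p\cup q\in G$) these properties pass to the pointwise infimum $P^{M_0^G}(\vec\tau):=P(\vec\tau)^G$, so $d^{M_0^G}$ is a genuine pseudometric and every symbol extends continuously to $M^G$. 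Canonicity is automatic, since each $c\in C$ is a term and the (equivalence classes of the) terms are dense in the completion.

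With $M^G$ available, the atomic case holds by construction. The clause $\tfrac{1}{2}\psi$ is immediate because $F_p^w$ commutes with scaling. For $\psi\dotplus\theta$ one uses directedness of $G$: given $\epsilon>0$, choose $p_1,p_2\in G$ with $F_{p_i}^w(\cdot)$ within $\epsilon$ of the respective infima, then $p_1\cup p_2\in G$ is a common refinement and monotonicity of $F^w$ gives the desired comparison. For $\inf_x\psi(x)$ one first replaces $\inf_x\psi(x)$ by $\bigvee_{c\in C}\psi(c)$ up to arbitrarily small error (using canonicity of $M^G$ and the uniform continuity of $\psi$), reducing to the $\bigvee$-case.

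The main obstacle is the nested quantification in the $F_p^w$-rules for $\neg$ and $\bigvee$. Take $\neg\psi$ as the representative case: we need
\[
\inf_{p\in G} F_p^w(\neg\psi) \;=\; 1-\sup_{p\in G}\inf_{q\supseteq p}F_q^w(\psi) \;=\; 1-\inf_{p\in G}F_p^w(\psi).
\]
The inequality $\sup_{p\in G}\inf_{q\supseteq p}F_q^w(\psi)\le \inf_{p\in G}F_p^w(\psi)$ follows from directedness of $G$ together with monotonicity of $F^w$: for any $p,p'\in G$, $p\cup p'\in G$ and $\inf_{q\supseteq p}F_q^w(\psi)\le F_{p\cup p'}^w(\psi)\le F_{p'}^w(\psi)$. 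The reverse inequality is where the genericity of $G$ is essential. The genericity clause, applied to a restricted sentence approximating $\psi$, produces for every $\epsilon>0$ some $p_0\in G$ with $F_{p_0}(\psi)+F_{p_0}(\neg\psi)<1+\epsilon$, which unpacks (using $F_{p_0}(\neg\psi)=1-\inf_{q\supseteq p_0}F_q(\psi)$ and the coincidence of $F^w$ and $F$ on atomic sentences) into $F_{p_0}(\psi)-\inf_{q\supseteq p_0}F_q(\psi)<\epsilon$. Combined with the inductive hypothesis this shows $\inf_{q\supseteq p_0}F_q^w(\psi)\ge \psi^G-\epsilon$, giving the desired collapse of the nested $\sup\inf$ to a single infimum. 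The $\bigvee\Phi$ case is handled analogously by invoking the genericity condition on the individual $\varphi_i\in\Phi$ (and their negations), and the same mechanism closes out the $\inf_x$ case through its reduction to $\bigvee_{c\in C}$. The whole argument is thus driven by the tension between the two defining features of $G$: directedness (which yields the easy inequalities and the handling of $\dotplus$) and genericity (which precisely kills the $\sup\inf$ gap that obstructs the $\neg$ and $\bigvee$ cases).
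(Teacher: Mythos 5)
This statement is quoted in the paper as a Fact, with no proof given there: it is cited as the combination of Lemma 2.16 and Theorem 2.17 of \cite{BI}, so your attempt has to be measured against that argument. Your central mechanism for the value computation --- genericity collapses the $\sup\inf$ in the negation clause, while directedness plus monotonicity of $F_p$ in $p$ gives the easy inequalities --- is correct and is essentially how \cite{BI} proceeds. However, the two preliminary steps are wrong as written, and both fail for the same reason: you try to get by without genericity exactly where it is indispensable. First, the prestructure verification. Satisfiability of each individual condition plus directedness of $G$ does not make $d^{M_0^G}$ a pseudometric, nor does it give the moduli of uniform continuity, because $f_p$ assigns the default value $1$ to every atomic sentence not mentioned in $p$. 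For instance $G=\{\emptyset\}$ is directed and consists of satisfiable conditions, yet it yields $d(\tau,\tau)^G=1$; likewise the triangle inequality can fail, since nothing in a merely directed family need ever bound $d(\tau_1,\tau_3)$ even when $d(\tau_1,\tau_2)$ and $d(\tau_2,\tau_3)$ are bounded by conditions in $G$. The repair is the genericity clause: applied to the atomic sentence $\varphi=d(\tau_1,\tau_3)$ it gives $p_0\in G$ with $f_{p_0}(\varphi)<\inf_{q\supseteq p_0}F_q(\varphi)+\delta$, and since $p_0\cup p_1\cup p_2\in G$ (with $p_1,p_2$ near-optimal for the two legs) can be consistently extended by an expression $d(\tau_1,\tau_3)<r_1+r_2$, one gets $d^G(\tau_1,\tau_3)\leq d^G(\tau_1,\tau_2)+d^G(\tau_2,\tau_3)$. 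This is precisely the content of \cite[Lemma 2.16]{BI}, which your sketch skips.

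Second, canonicity is not automatic. Density of the term algebra in its own completion is trivial but irrelevant: canonical means that the interpretations of the constants from $C$ are dense, and when $L$ has function symbols nothing forces any $c\in C$ to be close to a compound term such as $f(c_1,c_2)$ unless conditions in $G$ say so. Again genericity is what saves you: apply it to $\inf_x d(x,\tau)$; since every condition extends to one containing $d(c,\tau)<\epsilon$ for a fresh constant $c$, one has $F_p(\neg\inf_x d(x,\tau))=1$ for all $p$, so genericity yields $p\in G$ with $\inf_{c\in C}f_p(d(c,\tau))<\epsilon$. Because your $\inf_x$ clause leans on canonicity, this gap propagates into the induction. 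A smaller point: routing the induction through the $F^w$ recursion makes the $\dotplus$ and $\bigvee$ clauses needlessly delicate; your directedness argument gives only $\inf_{p\in G}F^w_p(\psi\dotplus\theta)\leq\psi^G\dotplus\theta^G$, and the reverse inequality is unclear from the $F^w$ rules because the inner extensions range over arbitrary conditions, not members of $G$. If you instead compute $\varphi^G=\inf_{p\in G}F_p(\varphi)$ with the strong-forcing clauses, then $\dotplus$, $\bigvee$, and $\inf_x$ become immediate interchanges of infima (using directedness and monotonicity), and genericity is needed only for $\neg$ together with the structure and canonicity verifications above.
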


We can now prove:

\begin{prop}
$B^+$ is finite-generic$^+$ if and only if $B^+\cong M^G$ for some generic filter $G$.
\end{prop}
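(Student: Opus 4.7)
The plan is to deduce the proposition from two inputs already available in the excerpt: Theorem \ref{coincides} (identifying $F^g_p = F^w_p$) and the Generic Model Theorem together with Lemma 2.13 of \cite{BI} (giving $\varphi^{M^G} = \inf_{p \in G} F_p^w(\varphi)$ for restricted $L(C)_{\omega_1,\omega}$-sentences). Combining these yields the central identity: if $B^+ \cong M^G$, then for any restricted $L(C)_{\omega_1,\omega}$-sentence $\varphi$,
\[
\varphi^{B^+} = \varphi^G = \inf_{p \in G} F_p^w(\varphi) = \inf_{p \in G} F_p^g(\varphi).
\]
This identity is the workhorse on both sides of the equivalence.

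For the forward direction, assume $B^+$ is finite-generic$^+$ and set $G := \{p : p \text{ is a condition with } p \subseteq \appdiag(B^+)\}$. Closure of $G$ under finite unions is immediate. For the second clause of genericity, given a restricted $L(C)$-sentence $\varphi$ with $\varphi^{B^+} = s$ and $\rho > 1$, pick $\epsilon > 0$ with $2\epsilon < \rho - 1$. Finite-genericity of $B^+$ supplies $p_0 \in G$ with $p_0 \Vdash^g |\varphi - s| < \epsilon$, so by Theorem \ref{coincides} we have $F_{p_0}^w(\varphi) < s+\epsilon$ and $F_{p_0}^w(\neg\varphi) < 1-s+\epsilon$; unwinding $F^w = \sup \inf F$ and using monotonicity of $F_p$ in $p$ for atomic subformulas then produces an extension $p \in G$ of $p_0$ with $F_p(\varphi) + F_p(\neg\varphi) < \rho$. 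Having verified that $G$ is generic, the isomorphism $B^+ \cong M^G$ reduces to matching atomic values: for atomic $\alpha$, $\alpha^{M^G} = \alpha^G = \inf \{t : \{\alpha<t\} \in G\} = \alpha^{B^+}$, since precisely the expressions $\alpha < t$ with $t > \alpha^{B^+}$ lie in $\appdiag(B^+)$.

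For the converse, assume $B^+ \cong M^G$. To prove ``truth $\Rightarrow$ forcing'': if $\varphi^{B^+} < r$, the central identity yields $p \in G$ with $F_p^g(\varphi) < r$, and one refines $p$ within $G$ so that each expression $\psi < s$ in $p$ satisfies $\psi^{B^+} < s$ (i.e., $p \subseteq \appdiag(B^+)$); this uses genericity of $G$ to strengthen the non-strict bound $\psi^{B^+} \leq F_p(\psi) \leq s$ into a strict one. To prove ``forcing $\Rightarrow$ truth'': if $p^* \subseteq \appdiag(B^+)$ satisfies $F_{p^*}^g(\varphi) < r$, then for each $\psi < s \in p^*$ we have $\psi^{B^+} < s$, so the central identity applied to $\psi$ produces $q_\psi \in G$ with $F_{q_\psi}^g(\psi) < s$; the union $q := \bigcup_\psi q_\psi \in G$ (using closure under unions) is compatible with $p^*$, and a compatibility argument allows $p^* \cup q$ to be replaced by an honest element of $G$, after which monotonicity of $F^g$ (Lemma \ref{properties}(3)) gives $\varphi^{B^+} \leq F_q^g(\varphi) \leq F_{p^*}^g(\varphi) < r$ via the central identity.

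The main obstacle is the compatibility argument in the final step: showing that a condition $p^* \subseteq \appdiag(M^G)$ can be ``absorbed'' into an element of $G$ without losing forcing power. The delicate point is that $p \in G$ need not imply $p \subseteq \appdiag(M^G)$ (an expression $\psi < s \in p \in G$ could have $\psi^{M^G} = s$ rather than strictly less), so one must carefully exploit monotonicity of $F^g$ under extensions together with the strict refinements guaranteed by the second clause of genericity, which applied to $\neg\psi$ forces the nondegeneracy $\psi^{M^G} < s$ required for compatibility.
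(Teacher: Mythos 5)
Your skeleton is the same as the paper's: for the forward direction take $G$ to be the set of conditions contained in $\appdiag(B^+)$, verify genericity using finite-genericity together with Theorem \ref{coincides}, and match atomic values; for the converse use $\varphi^{M^G}=\varphi^G=\inf_{p\in G}F_p^w(\varphi)=\inf_{p\in G}F_p^g(\varphi)$. The genuine problem is the mechanism you propose for the point you yourself flag as delicate. You claim that the second clause of genericity, applied to $\neg\psi$, forces $\psi^{M^G}<s$ whenever $\psi<s$ occurs in some $p\in G$, so that $p$ (or a refinement of it inside $G$) lands in $\appdiag(M^G)$. This is false: genericity only demands that each restricted sentence be \emph{approximately} decided, and a generic set can decide an atomic value by pinning it at the bound from below. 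For instance, start from $p_0=\{d(c_1,c_2)<1/2\}$ and build the usual chain meeting all genericity requirements while only ever adding upper bounds $d(c_1,c_2)<t$ with $t\geq 1/2$ and lower bounds $\neg\, d(c_1,c_2)<1/2+\delta$ with $\delta\to 0$; the requirement $F_p(\varphi)+F_p(\neg\varphi)<r$ is met for every restricted $\varphi$, yet $d(c_1,c_2)^{M^G}=1/2$ while $p_0\in G$. Then $p_0\not\subseteq\appdiag(M^G)$, and since every extension of $p_0$ still contains the offending expression, no refinement of $p_0$ (inside $G$ or otherwise) lies in $\appdiag(M^G)$ either. So the ``compatibility argument'' and the absorption of $p^*\cup q$ into ``an honest element of $G$'' cannot work as described; to produce a condition inside $\appdiag(M^G)$ one must be allowed to \emph{change} the bounds rather than extend the condition, and then the forcing estimate has to be re-derived, which your sketch does not do.

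For comparison, the paper's converse is much shorter: it takes $p\in G$ with $p\Vdash^w\varphi<r+\epsilon$, asserts $p\subseteq\appdiag(M^G)$, applies Theorem \ref{coincides}, and then only proves the ``truth implies approximate two-sided forcing'' half, relying on the lemma stated right after the definition of finite-generic$^+$ to dispense with a separate ``forcing implies truth'' argument (your separate treatment of that direction adds the extra machinery that gets you into trouble). The paper is admittedly terse at exactly the membership claim $p\subseteq\appdiag(M^G)$, and you were right to sense that something needs to be said there; but the justification you supply is incorrect, and since your entire converse direction funnels through it, this is a genuine gap rather than a cosmetic one.
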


\begin{proof}
First suppose that $B^+$ is finite-generic.  Let $G$ consist of all conditions contained in $\appdiag(B^+)$.  We first note that $G$ is generic.  It is clear that the union of two conditions in $G$ is a condition in $G$ again.  Now suppose that $\varphi$ is an $L(C)$-sentence and $r>1$.  Choose $\epsilon>0$ such that $1+2\epsilon<r$.  Take $p\in G$ and $a\in \r$ such that $p\Vdash^g |\varphi-a|<\epsilon$.  Then $F_p^w(\varphi)+F_p^w(\neg \varphi)<1+2\epsilon<r$.  It follows that $G$ is generic.  By the construction of $M^G$, it is now clear that $B^+\cong M^G$.

Conversely, suppose that $G$ is a generic set; we must show that $M^G$ is finite-generic$^+$.  Suppose that $\varphi$ is a restricted $L(C)$-sentence such that $\varphi^{M^G}=r$.  Fix $\epsilon>0$.  Since $\varphi^G=r$, we have $p\in G$ such that $p\Vdash^w \varphi<r+\epsilon$; since $p\subseteq \appdiag(M^G)$, by Theorem \ref{coincides}, we have $M^G\Vdash^g \varphi<r+\epsilon$.  By considering $\neg \varphi$, we see that $M^G\Vdash^g |\sigma-r|<\epsilon$.  By density of the restricted formulae, we see that $M^G$ is finite-generic$^+$.
\end{proof}

\begin{rmk}
Returning to our earlier remarks about the difference between our definition of finite-generic$^+$ and the one appearing in \cite{hodges}, we note that if one were to demand finite-generic$^+$-structures to be extra canonical, then it does not appear that one would be able to obtain the previous proposition as the generic $G$ may be agnostic about the infinitary expression $\bigvee_{k>m} d(c_k,c_n)<\epsilon$.
\end{rmk}

\begin{prop}\label{enforcegeneric}
 Being finite-generic$^+$ is enforceable.
\end{prop}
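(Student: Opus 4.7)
The plan is to apply the Conjunction Lemma (Lemma~\ref{properties}(4)) to reduce finite-genericity$^+$ to a countable family of individually enforceable tasks. Writing $r_\sigma := \sigma^{B^+}$ for the compiled structure $B^+$, it suffices to enforce extra canonicity (already established) together with, for each restricted $L(C)$-sentence $\sigma$ and each rational $\epsilon > 0$, the property
\[
P_{\sigma,\epsilon}: \quad B^+ \Vdash^g |\sigma - r_\sigma| < \epsilon.
\]
Density of restricted formulas in the seminorm on $L(C)$-sentences together with Lemma~\ref{forceapprox} will then upgrade the analogous forcing statement from restricted $\sigma$ to every $L(C)$-sentence $\varphi$, at which point the characterization lemma preceding this proposition gives finite-genericity$^+$.

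The core step is to enforce a single $P_{\sigma,\epsilon}$. Here is the strategy for $\exists$: when $\forall$ opens with some condition $p_0$, invoke the preceding proposition to obtain $q \supseteq p_0$ and reals $a < b$ with $b - a < \epsilon$ such that $q \Vdash^g a < \sigma < b$. Player $\exists$ responds with $q$ and thereafter follows any strategy witnessing $q \Vdash^g a < \sigma < b$. In any resulting play, $q \subseteq \appdiag(B^+)$ and $B^+ \models a < \sigma < b$, so $r_\sigma \in (a, b)$. Since $b - a < \epsilon$, elementary arithmetic gives $(a,b) \subseteq (r_\sigma - \epsilon, r_\sigma + \epsilon)$, so $q$ itself forces $|\sigma - r_\sigma| < \epsilon$, yielding $B^+ \Vdash^g |\sigma - r_\sigma| < \epsilon$ as required.

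For the passage from a restricted approximant to a general $L(C)$-sentence $\varphi$ with tolerance $\epsilon$, I would choose a restricted $\sigma$ with $\|\varphi - \sigma\| < \epsilon/3$, apply the already-enforced $P_{\sigma, \epsilon/3}$ to obtain $p \subseteq \appdiag(B^+)$ with $p \Vdash^g |\sigma - \sigma^{B^+}| < \epsilon/3$, and then use two applications of Lemma~\ref{forceapprox} together with $|\sigma^{B^+} - \varphi^{B^+}| \leq \|\sigma - \varphi\| < \epsilon/3$ to conclude $p \Vdash^g |\varphi - \varphi^{B^+}| < \epsilon$, which is what the characterization lemma demands.

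The genuine work was already done in the preceding proposition, whose induction on formula complexity (especially the $\bigwedge$-step using Lemma~\ref{twostep}) is where all the forcing-theoretic content lives. The present proposition is essentially a packaging result, and I do not anticipate any real obstacle: the only thing to check is that the interval-pinning $q \Vdash^g a < \sigma < b$ supplied by the preceding proposition automatically upgrades to forcing the symmetric statement $|\sigma - r_\sigma| < \epsilon$ as soon as one observes $r_\sigma \in (a,b)$.
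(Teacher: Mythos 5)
Your proof is correct and follows essentially the same route as the paper's: enforce canonicity and, for each restricted sentence and rational $\epsilon$, respond to $\forall$'s opening move with the interval-pinning condition $q\supseteq p_0$ supplied by the preceding proposition, then follow its winning strategy so that $q\subseteq\appdiag(B^+)$ forces $|\sigma-\sigma^{B^+}|<\epsilon$, concluding via the characterization lemma and the Conjunction Lemma. The only difference is that you spell out explicitly the upgrade from restricted to arbitrary $L(C)$-sentences via density and Lemma~\ref{forceapprox}, which the paper leaves implicit.
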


\begin{proof}
We already know that we can enforce the compiled structure to be canonical.  Now suppose that $\varphi$ is a restricted $L(C)$-sentence and $\epsilon\in \mathbb{Q}^{>0}$.  It suffices to show that we can enforce the following property:  if $\sigma^{A^+}=r$, then $A^+\Vdash^g |\varphi-r|<\epsilon$.  Here is the strategy:  suppose that $\forall$ plays $p_0$.  Then there is $p_1\supseteq p_0$ and an interval $I$ with $|I|<\epsilon$ such that $p_1\Vdash^g \varphi\in I$.  Have $\exists$ play $p_1$ and use the winning strategy.  Then the compiled structure $A^+$ will have $p_1\subseteq \appdiag(A^+)$.
\end{proof}

The following characterization of finite-generic structures in terms of the forcing companion $T^f$ will prove quite useful. 

\begin{prop}\label{chargen}
For an $L$-structure $B$, the following are equivalent:
\begin{enumerate}
\item $B$ is finite-generic;
\item $B\models T^f$ and for all $B\subseteq C\models T^f$, we have $B\preceq C$;
\item $B\models T_\forall$ and for all $B\subseteq C\models T^f$, we have $B\preceq C$.
\end{enumerate}
\end{prop}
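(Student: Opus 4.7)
The plan is to prove the cyclic chain $(1) \Rightarrow (2) \Rightarrow (3) \Rightarrow (1)$. The easy implication $(2) \Rightarrow (3)$ follows from Corollary \ref{mutual}: since $T_\forall \subseteq T^f$, every model of $T^f$ satisfies $T_\forall$.

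For $(1) \Rightarrow (2)$, let $B^+$ be a finite-generic$^+$ expansion of $B$. That $B \models T^f$: every closed condition $\sigma = r$ in $T^f$ is enforceable, so by the Conjunction Lemma $\Vdash^g |\sigma - r| < \epsilon$ for every $\epsilon > 0$; hence $B^+ \Vdash^g |\sigma - r| < \epsilon$ via the empty condition in $\appdiag(B^+)$, and finite-generic$^+$-ness gives $|\sigma^B - r| < \epsilon$, so $\sigma^B = r$. The elementarity clause rests on the following sublemma: if $B^+, D^+$ are both finite-generic$^+$ and $B^+ \subseteq D^+$, then $B^+ \preceq D^+$. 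Indeed, for any $L(C)$-sentence $\sigma$ with $\sigma^{B^+} = s$, finite-generic$^+$-ness of $B^+$ yields $p \subseteq \appdiag(B^+) \subseteq \appdiag(D^+)$ with $p \Vdash^g |\sigma - s| < \epsilon$, whence finite-generic$^+$-ness of $D^+$ gives $|\sigma^{D^+} - s| < \epsilon$, forcing $\sigma^{D^+} = s$. Given $B \subseteq C \models T^f$, I construct a finite-generic$^+$ $D^+$ with $B^+ \subseteq D^+$ and $C \subseteq D$ as $L$-structures (fixing $B$ pointwise) by expanding $C$ to a canonical expansion $C^+ \supseteq B^+$ (over an enlarged constant set if needed) and then playing the game with $\forall$ enumerating $\appdiag(C^+)$ while $\exists$ uses the winning strategy for finite-generic$^+$-ness from Proposition \ref{enforcegeneric}. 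The sublemma yields $B \preceq D$; combined with the sandwich $B \subseteq C \subseteq D$, this forces $B \preceq C$ via the standard comparison of $\inf$- and $\sup$-values over chains.

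For $(3) \Rightarrow (1)$, expand $B$ to an extra canonical $L(C)$-structure $B^+$ and argue by contradiction: if $B^+$ is not finite-generic$^+$, there exist an $L(C)$-sentence $\sigma$ with $\sigma^{B^+} = s$ and $\epsilon > 0$ such that no $p \subseteq \appdiag(B^+)$ forces $|\sigma - s| < \epsilon$. Using the bracket proposition preceding the JEP completeness theorem together with Lemma \ref{twostep}, one constructs a generic filter $G \supseteq \appdiag(B^+)$ with $|\sigma^{M^G} - s| \geq \epsilon/2$: the failure of the forcing assumption allows steering the bracketing of $\sigma$ into an interval disjoint from a neighborhood of $s$. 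The Generic Model Theorem makes $M^G$ finite-generic$^+$, so $M^G \models T^f$ by the first half of $(1) \Rightarrow (2)$ already established; extra-canonical-ness of $B^+$ yields $B \subseteq M^G$. Hypothesis (3) then gives $B \preceq M^G$, forcing $\sigma^{M^G} = \sigma^{B^+} = s$, contradicting $|\sigma^{M^G} - s| \geq \epsilon/2$.

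The main obstacle is the construction of $D^+$ in $(1) \Rightarrow (2)$: $\forall$'s goal of exhausting $\appdiag(C^+)$ must remain compatible with $\exists$'s finite-generic$^+$-enforcing responses at each step, and a priori the bracketing proposition can land $\exists$ outside $\appdiag(C^+)$. The crucial observation is that the inductive bracket construction can be executed entirely inside $\appdiag(C^+)$ by taking $C^+$ itself as the satisfying model at each atomic step; since $C$ extends to a model of $T$ (via $C \models T_\forall$ and the $\forall\exists$-axiomatizability conventions of the preliminaries), the bracketed value $r = \varphi^{C^+}$ sits in $\appdiag(C^+)$, keeping $\exists$'s moves inside $\appdiag(C^+)$ and $\forall$'s enumeration feasible.
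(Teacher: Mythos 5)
There are two genuine gaps, both in the hard directions. First, in (1)$\Rightarrow$(2), your final inference is invalid: from $B\preceq D$ together with the sandwich $B\subseteq C\subseteq D$ you cannot conclude $B\preceq C$. The ``standard comparison of $\inf$- and $\sup$-values'' only yields that $B$ is existentially closed in $C$ (one layer of quantifiers); full elementarity would additionally require $C\preceq D$, which you never establish and which is essentially as hard as the proposition itself, since $C$ is merely a model of $T^f$ and not finite-generic. The paper avoids the sandwich entirely: from $p=\{\psi_i(c,d)<r_i\}\subseteq\appdiag(B^+)$ with $p\Vdash^g\varphi(c)<r$, it shows (using homogeneity and extra canonicity) that the closed condition $\sup_x\sup_y\min(\min_i(r_i\dotminus\psi_i(x,y)),\varphi(x)\dotminus r)=0$ is enforceable, hence belongs to $T^f$, hence holds in $C$; since the $\psi_i$ are quantifier-free, their values at $c^{B^+},d^{B^+}$ are the same in $B$ and $C$, giving $\varphi(c^{B^+})^C\leq r$ directly, and applying this to $\neg\varphi$ as well gives elementarity.

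Second, both your (1)$\Rightarrow$(2) and your (3)$\Rightarrow$(1) rest on producing a finite-generic$^+$ structure (equivalently, a generic set) whose approximate diagram contains all of $\appdiag(C^+)$, respectively $\appdiag(B^+)$. This is nowhere available, and your ``crucial observation'' does not deliver it: it repairs only the atomic step of the bracketing proposition, whereas at the $\bigwedge$ and $\inf_x$ steps the interval into which $\exists$ must force a quantified sentence is dictated by the forcing relation (an infimum over \emph{all} extensions $q\supseteq p$), not by the value in $C^+$, and the witnessing extensions can be inconsistent modulo $T$ with later chunks of $\appdiag(C^+)$, at which point $\forall$'s enumeration is blocked; the same objection applies to ``steering'' a generic $G\supseteq\appdiag(B^+)$ so that $|\sigma^{M^G}-s|\geq\epsilon/2$, since the steering extensions must remain compatible with the entire remaining diagram, which the mere failure of $B^+\Vdash^g|\sigma-s|<\epsilon$ does not guarantee. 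Note also that finite-generic structures are automatically separable (they are reducts of canonical $L(C)$-structures), so for non-separable $C\models T^f$ no such $D$ exists at all, and you never reduce to the separable case. The paper's (3)$\Rightarrow$(1) sidesteps all of this by pure compactness: hypothesis (3) gives $\Diag(B^+)\cup T^f\models|\varphi-r|<\epsilon$, compactness extracts a finite $p\subseteq\appdiag(B^+)$ and a single closed condition $\chi=0$ from $T^f$, and then $p\Vdash^g|\varphi-r|<\epsilon$ because $\chi=0$ is enforceable, the only remaining point being that $p$ is a condition, which uses just $B\models T_\forall$.
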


\begin{proof}
(1) implies (2):  Suppose that $B$ is the reduct of the finite-generic$^+$ structure $B^+$.  We first show that $B\models T^f$.  Suppose that $\varphi^{T^f}=r$ but $\varphi^B=s\not=r$.  Fix $\epsilon>0$ such that $|r-s|\geq \epsilon$.  Then we arrive at a contradiction since $B^+\Vdash^g |\varphi-s|<\epsilon$ whilst $\Vdash^g |\varphi-r|<\epsilon$.  Thus $B\models T^f$.  

Now suppose that $B\subseteq C\models T^f$.  Let $\varphi(c)$ be an $L(C)$-sentence and $r\in \r^{>0}$ such that $\varphi(c)^{B^+}<r$; it suffices to show that $\varphi(c^{B^+})^C\leq r$.  Take $p\subseteq \appdiag(B^+)$ such that $p\Vdash^g \varphi(c)<r$.  Write $p=\{\psi_i(c,d)<r_i \ : \ i=1,\ldots,k\}$ where $d$ is a tuple of distinct constants disjoint from the tuple $c$.  Then, for any $\epsilon>0$, we have that $\min(\min_{1\leq i\leq k} (r_i\dotminus\psi_i(c,d)),\varphi(c)\dotminus r)=0$ is enforceable.  Indeed, if player $\forall$ plays $p_0$, then either $p_0\cup p$ is unsatisfiable (whence the first term in the minimum is $0$ in the compiled structure) or else $\exists$ can play $p_0\cup p$ and then follow the strategy witnessing $p\Vdash^g \varphi(c)<r$.  By homogeneity and the fact that being extra canonical is enforceable, it follows that the closed condition $$(\sup_x\sup_y\min(\min_i (r_i\dotminus\psi_i(x,y)),\varphi(x)\dotminus r))=0$$ is enforceable, whence belongs to $T^f$.  Since $$r_i\dotminus\psi_i(c^{B^+},d^{B^+})^C=r_i\dotminus \psi_i(c^{B^+},d^{B^+})^B$$ and $C\models T^f$, we have $\varphi(c^{B^+})^C\leq r$.

(2) implies (3) follows from the fact that $T_\forall\subseteq T^f$.  Now suppose that (3) holds.  Expand $B$ to a canonical $L(C)$-structure $B^+$.  Now suppose that $\varphi$ is an $L(C)$-sentence such that $\varphi^{B^+}=r$ and fix $\epsilon>0$.  By (3), $\Diag(B^+)\cup T^f\models |\varphi-r|<\epsilon$.  By compactness, there is $p\subseteq \appdiag(B^+)$ and a closed condition $\chi=0$ from $T^f$ such that $p\cup\{\chi=0\}\models |\varphi-r|<\epsilon$.  It suffices to show that $p$ is a condition, for then since $\chi=0$ is enforceable, we have that $p\Vdash^g |\varphi-r|<\epsilon$, as desired.  Write $p=\{\psi_i<r_i \ : \ i=1,\ldots, k\}$.  If $p$ is not a condition, then $T_\forall\models \sup_x \prod_i (r_i\dotminus \psi_i)=0$, contradicting that $B\models T_\forall$.
\end{proof}

\begin{cor}\label{genec}
Suppose that $B$ is finite-generic.  Then $B$ is an e.c. model of $T_\forall$.
\end{cor}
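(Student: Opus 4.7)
The plan is to bolt together three pieces already in hand: the characterization in Proposition \ref{chargen}, the identification $T_\forall=(T^f)_\forall$ from Corollary \ref{mutual}, and the standard compactness fact that every model of the universal part of a theory embeds into a model of that theory (essentially the same fact implicitly used inside the proof of Corollary \ref{mutual}).

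First, since $B$ is finite-generic, Proposition \ref{chargen}(2) gives $B\models T^f$, whence $B\models T_\forall$ since $T_\forall\subseteq T^f$. To show $B$ is e.c.\ for $T_\forall$, I would take an arbitrary extension $B\subseteq D$ with $D\models T_\forall$ and show that $B$ is existentially closed in $D$. By Corollary \ref{mutual}, $D\models (T^f)_\forall$, so $\Diag(D)\cup T^f$ is satisfiable (otherwise, by compactness, a universal consequence of $T^f$ — which lies in $(T^f)_\forall=T_\forall$ — would fail in $D$). Fix $C\models T^f$ with $B\subseteq D\subseteq C$.

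By Proposition \ref{chargen}(2) applied to the extension $B\subseteq C$, we have $B\preceq C$. For any existential formula $\theta(x)=\inf_y\psi(x,y)$ and any tuple $a$ from $B$, the inclusions $B\subseteq D\subseteq C$ yield
$$\theta(a)^B=\theta(a)^C\leq \theta(a)^D\leq \theta(a)^B,$$
forcing all three quantities to be equal. Hence $\theta(a)^B=\theta(a)^D$, so $B$ is existentially closed in $D$, which is what we needed.

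Since every ingredient is already on the table, there is no real obstacle. The only step worth flagging is the embedding lemma for $(T^f)_\forall$ into $T^f$, which is a routine continuous-logic compactness argument of exactly the same flavor as the one carried out inside the proof of Corollary \ref{mutual}, and could be cited rather than reproduced.
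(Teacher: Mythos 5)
Your proof is correct and is essentially the paper's own argument: extend the given model of $T_\forall$ to a model of $T^f$ (via Corollary \ref{mutual}), apply Proposition \ref{chargen} to get that $B$ sits elementarily in that larger model, and sandwich the values of the existential formula along $B\subseteq D\subseteq C$. The only difference is that you spell out the compactness step behind the extension to a model of $T^f$, which the paper absorbs into its citation of Corollary \ref{mutual}.
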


\begin{proof}
Suppose that $B\subseteq C\models T_\forall$, $\varphi(x)$ is an existential formula, and $a\in B$.  By Corollary \ref{mutual}, we may find $D\models T_f$ with $C\subseteq D$.  Since $B\preceq D$, we have that $$\varphi(a)^C\leq \varphi(a)^B=\varphi(a)^D\leq \varphi(a)^C.$$  It follows that $\varphi(a)^B=\varphi(a)^C$, as desired.
\end{proof}

\begin{cor}\label{ecinfg}
Suppose that $A$ is e.c. in $B$ and $B$ is finite-generic.  Then $A$ is finite-generic.
\end{cor}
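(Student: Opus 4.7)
The plan is to show that any canonical $L(C)$-expansion $A^+$ of $A$ is itself finite-generic$^+$, from which it follows that $A$ is finite-generic. Fix such an $A^+$ and set $G := \{p : p \text{ is a condition with } p \subseteq \appdiag(A^+)\}$. A direct verification from the definitions of $M^G$ and of $\appdiag$ shows $M^G \cong A^+$: for a restricted atomic predicate $P(\bar \tau)$, one has $P(\bar\tau)^{M^G} = \inf_{p \in G}F_p(P(\bar\tau)) = P(\bar\tau)^{A^+}$, since any strict upper bound on $P(\bar\tau)^{A^+}$ is witnessed by some $p \in G$. By the characterization of finite-generic$^+$ structures as exactly those isomorphic to some $M^G$ with $G$ generic (the proposition preceding Proposition \ref{enforcegeneric}), it suffices to verify that $G$ is a generic set of conditions.

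Closure of $G$ under pairwise unions is immediate. For the remaining axiom, fix a restricted $L(C)$-sentence $\varphi$ and $\eta > 0$; I will produce $p \in G$ with $F_p(\varphi) + F_p(\neg \varphi) < 1 + \eta$. Let $\bar c$ enumerate the constants in $\varphi$, set $\bar a := \bar c^{A^+} \in A$, and let $B^+$ be a canonical finite-generic$^+$ expansion of $B$. Using countability of $C$ and density of its interpretations in $B$, first permute the constants of $C$ (which preserves finite-genericity$^+$ up to isomorphism) so that $\bar c^{B^+}$ is within a tolerance $\delta$ of $\bar a$ in $B$, choosing $\delta$ small enough that the continuity moduli of the formulas appearing below absorb the resulting perturbations into an $\eta/4$-contribution. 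By genericity of $\appdiag(B^+)$, choose $p_B = \{\psi_i(\bar c, \bar d) < s_i : i \leq k\} \in \appdiag(B^+)$ with $F_{p_B}(\varphi) + F_{p_B}(\neg \varphi) < 1 + \eta/2$, where $\bar d$ is a tuple of constants disjoint from $\bar c$.

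Now transfer $p_B$ to $A^+$ using the e.c.\ hypothesis: since $\psi_i(\bar c^{B^+}, \bar d^{B^+})^B < s_i$ and $\bar c^{B^+} \approx \bar a$, continuity together with $A$ being e.c.\ in $B$ (applied to the existential formula $\inf_{\bar y}\max_i \max(0,\psi_i(\bar a,\bar y) - s_i + \eta')$ for a suitable $\eta' > 0$) yields $\bar a'' \in A$ with $\psi_i(\bar a, \bar a'')^A < s_i$ for each $i$. By canonicity of $A^+$, choose constants $\bar d'$ disjoint from $\bar c$ whose $A^+$-interpretations are close enough to $\bar a''$ that $p_A := \{\psi_i(\bar c, \bar d') < s_i : i \leq k\} \subseteq \appdiag(A^+)$, so $p_A \in G$. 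Finally, let $\pi$ be the permutation of $C$ that sends $\bar d$ to $\bar d'$ while fixing all other constants; in particular $\pi$ fixes $\bar c$ and therefore $\varphi$. Then $\pi(p_B) = p_A$, and since $F_p$ is defined syntactically---and therefore equivariant under permutations of $C$---we conclude $F_{p_A}(\varphi) = F_{p_B}(\varphi)$ and $F_{p_A}(\neg \varphi) = F_{p_B}(\neg \varphi)$. Hence $F_{p_A}(\varphi) + F_{p_A}(\neg \varphi) < 1 + \eta/2 < 1 + \eta$, as required.

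The main obstacle is the careful bookkeeping of approximations: one cannot in general arrange for the $\bar c$-constants to be interpreted identically in $A^+$ and $B^+$, so continuity of the relevant formulas must be invoked to absorb the discrepancy within the $\eta$-budget. A secondary technical point worth verifying is the equivariance of $F_p$ under permutations of $C$, which is clear from its inductive definition but parallels the statement of Lemma \ref{homog} for $\Vdash^g$.
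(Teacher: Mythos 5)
Your overall route --- realizing $A^+$ as $M^G$ for $G$ the set of conditions contained in $\appdiag(A^+)$, and checking genericity of $G$ by transferring conditions from a finite-generic$^+$ expansion of $B$ --- is genuinely different from the paper's proof (which first shows $A\preceq B$ via the ultrapower chain $A\subseteq B\hookrightarrow A^\u\subseteq B^\u\hookrightarrow\cdots$ and then verifies clause (3) of Proposition \ref{chargen} for $A$ directly), and it could be made to work; but as written there is a genuine gap exactly at the step you dismiss as bookkeeping: the choice of $\delta$ is circular. You permute the constants of a given finite-generic$^+$ expansion so that $\bar c^{B^+}$ lies within $\delta$ of $\bar a$, promising that $\delta$ is ``small enough for the continuity moduli of the formulas appearing below''; but those formulas $\psi_i$, the bounds $s_i$, and above all the available slack $\min_i\bigl(s_i-\psi_i(\bar c^{B^+},\bar d^{B^+})^B\bigr)$ are only produced \emph{after} you invoke genericity of the set of conditions in $\appdiag(B^+)$, i.e.\ after $B^+$ (hence $\delta$) has been fixed. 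Homogeneity only lets you approximate $\bar a$ to a tolerance chosen in advance, and no tolerance fixed in advance can be guaranteed to be absorbed by moduli and margins chosen afterwards; so the inequality $\psi_i(\bar a,\bar d^{B^+})^B<s_i$, which is precisely what your e.c.\ transfer needs, is not justified.

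The gap is repairable, but by an extra input rather than finer bookkeeping: by Proposition \ref{chargen}, being finite-generic is a property of the $L$-reduct alone ($B\models T_\forall$ and $B\preceq C$ whenever $B\subseteq C\models T^f$), and the proof of (3)$\Rightarrow$(1) there applies to an \emph{arbitrary} canonical expansion; hence you may choose $B^+$ with $\bar c^{B^+}=\bar a$ exactly (interpreting the remaining constants by a dense sequence in $B$), after which your transfer via the e.c.\ hypothesis and the permutation $\bar d\mapsto\bar d'$ goes through with no $\delta$ at all. (Of course, once you are using Proposition \ref{chargen} anyway, the paper's argument --- elementarity of $A\subseteq B$ from the chain above, then extending $A\hookrightarrow C^\u$ to an elementary map $B\hookrightarrow C^\u$ --- is arguably shorter.) The remaining ingredients of your proposal --- that $M^G\cong A^+$ as $L(C)$-structures, closure of $G$ under unions (which needs the observation that $A\models T_\forall$, so finite subsets of $\appdiag(A^+)$ really are conditions), and equivariance of $F_p$ under permutations of $C$ in the spirit of Lemma \ref{homog} --- are fine.
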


\begin{proof}
We first show that $A$ is actually elementary in $B$.  Since $A$ is e.c. in $B$, there is an embedding $B\hookrightarrow A^\u$ that restricts to the diagonal embedding $A\hookrightarrow A^\u$.  We thus have the chain
$$A\subseteq B\hookrightarrow A^\u\subseteq B^\u\hookrightarrow (A^\u)^\u\subseteq (B^\u)^\u\hookrightarrow\cdots$$ with union $A_\infty$.  Since the maps between the successive ultrapowers of $A$ are just ultrapowers of the diagonal map, we have that $A\preceq A_\infty$.  Since $B$ is finite-generic and $B^\u\models T^f$, we have that the embedding $B\hookrightarrow B^\u$ is elementary, whence so are the successive ultrapower maps.  It follows that $B\preceq A_\infty$, whence $A\preceq B$.

Now suppose that $A\subseteq C\models T^f$.  Since $A\models T_\forall$, it suffices to show that $A\preceq C$.  By Corollary \ref{genec}, $B$ is an e.c. model of $T_\forall$, whence so is $A$.  It follows that the inclusion map $A\hookrightarrow C^\u$ can be extended to a map $f:B\hookrightarrow C^\u$, which is elementary since $B$ is finite-generic.  
We then have that
$$\varphi(a)^A=\varphi(a)^B=\varphi(f(a))^{C^\u}=\varphi(a)^{C^\u}=\varphi(a)^C.$$
\end{proof}

\subsection{Finite-generic, enforceable, and prime structures}

In this subsection, we maintain the convention that forcing is with respect to the $L$-theory $T$.  The following definition contains one of the central notions of this paper.

\begin{df}
An $L$-structure $A$ is \emph{enforceable} if the property ``the reduct of the compiled structure is isomorphic to $A$'' is an enforceable property.  
\end{df}

If we want to stress the base theory $T$, we say that $A$ is enforceable with respect to $T$.  If $T$ is universal and $A$ is enforceable with respect to $T$, then by Proposition \ref{enforceuniversal}, $A$ is necessarily a model of $T$, whence may also speak of $A$ being the enforceable model of $T$.

From Proposition \ref{enforcegeneric}, we immediately have:

\begin{cor}
If $A$ is enforceable, then $A$ is finite-generic.
\end{cor}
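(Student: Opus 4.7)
The plan is to combine the enforceability hypothesis with Proposition \ref{enforcegeneric} via the Conjunction Lemma. Since $A$ is enforceable, the property $P_1$ that the reduct of the compiled structure is isomorphic to $A$ is enforceable. By Proposition \ref{enforcegeneric}, the property $P_2$ that the compiled structure is finite-generic$^+$ is also enforceable.

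By Lemma \ref{properties}(4), the conjunction $P_1 \wedge P_2$ is enforceable. In particular, there exists at least one play of the game (namely, any play where $\exists$ follows a winning strategy for $P_1 \wedge P_2$) yielding a compiled structure $B^+$ which is finite-generic$^+$ and whose $L$-reduct $B$ is isomorphic to $A$. By definition of finite-generic, $B$ is finite-generic, and hence so is $A$.

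The proof is essentially a one-line application of what we already have; there is no obstacle to speak of. The only thing worth being careful about is ensuring that both the "isomorphic to $A$" clause and the "finite-generic$^+$" clause are phrased as properties of the compiled $L(C)$-structure (or of its reduct) so that the Conjunction Lemma applies uniformly, but this is purely bookkeeping.
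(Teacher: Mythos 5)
Your argument is correct and is exactly the paper's intended (unwritten) proof: the corollary is stated as an immediate consequence of Proposition \ref{enforcegeneric} via the Conjunction Lemma, precisely as you argue. No issues.
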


Recall that an $L$-structure $B$ is said to be an \emph{algebraically prime model of its theory} if $B$ embeds into $C$ whenever $C\equiv B$.  $B$ is further said to be the \emph{prime model of its theory} if $B$ embeds elementarily into $C$ whenever $C\equiv B$.

The following corollaries follow immediately from Proposition \ref{chargen}.

\begin{cor}
Suppose that $B$ is a finite-generic structure and an algebraically prime model of its theory.  Then $B$ is the prime model of its theory.
\end{cor}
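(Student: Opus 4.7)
The plan is to apply Proposition \ref{chargen} directly to any elementarily equivalent extension supplied by algebraic primeness. Let $C$ be any $L$-structure with $C \equiv B$. Since $B$ is algebraically prime, there is an embedding $i \colon B \hookrightarrow C$; identify $B$ with its image so that $B \subseteq C$. The task is to upgrade this inclusion to an elementary one.

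The key observation is that the theory $T^f$ is preserved under elementary equivalence in the obvious sense: by Proposition \ref{chargen}(1)$\Rightarrow$(2), $B$ finite-generic implies $B \models T^f$, and since $C \equiv B$ we automatically have $C \models T^f$ as well. Now we are precisely in the hypothesis of the second clause of Proposition \ref{chargen}(2): we have $B \subseteq C$ with $C \models T^f$ and $B$ finite-generic. Therefore $B \preceq C$, which means the embedding $i$ is elementary. Since $C$ was arbitrary, $B$ is the prime model of its theory.

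I do not foresee any genuine obstacle here; the entire argument is a two-line unpacking of the characterization established in Proposition \ref{chargen}, together with the trivial remark that membership in $T^f$ passes between elementarily equivalent structures. The only minor point worth flagging is the identification of $B$ with its image under $i$, so that the hypothesis ``$B \subseteq C$'' in Proposition \ref{chargen} is literally satisfied; modulo this cosmetic step, the conclusion is immediate.
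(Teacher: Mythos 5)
Your proof is correct and is exactly the unpacking the paper intends: the paper derives this corollary "immediately" from Proposition \ref{chargen}, using precisely the fact that $B\models T^f$ (so any $C\equiv B$ also models the finitary theory $T^f$) together with the clause that $B\preceq C$ whenever $B\subseteq C\models T^f$. The identification of $B$ with its image under the embedding is harmless since finite-genericity is isomorphism-invariant, so nothing further is needed.
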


\begin{cor}\label{prime}
Suppose that $B$ is the enforceable structure and an algebraically prime model of its theory.  Then $B$ is the prime model of its theory.
\end{cor}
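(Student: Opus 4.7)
The plan is to chain together two earlier results: the corollary that enforceable structures are finite-generic, and the characterization of finite-generic structures given by Proposition \ref{chargen}. The statement is essentially a direct consequence, so there will be no substantive obstacle — the main task is just to trace the implications carefully.

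First, I would recall that $B$, being enforceable, is finite-generic (by the corollary just proved). Consequently, Proposition \ref{chargen}(2) applies: $B \models T^f$, and for any $L$-structure $D$ with $B \subseteq D \models T^f$, we have $B \preceq D$.

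Now let $C \equiv B$; the goal is to show $B$ embeds elementarily into $C$. Since $B$ is algebraically prime, there is an embedding $i : B \hookrightarrow C$, which I would identify with an inclusion $B \subseteq C$ (up to renaming). Because $C \equiv B$ and $B \models T^f$, also $C \models T^f$. Then the clause ``$B \subseteq D \models T^f$ implies $B \preceq D$'' from Proposition \ref{chargen}(2), applied with $D = C$, yields $B \preceq C$. Thus the given embedding is elementary, and $B$ is the prime model of its theory.

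The only subtlety worth flagging is that the embedding furnished by algebraic primeness need not itself be elementary a priori — that is exactly what we are using the finite-generic machinery to upgrade. Since the finite-generic characterization is phrased for the inclusion of $B$ in any model of $T^f$ extending it, one must remember that elementarity passes through an arbitrary embedding by identifying $B$ with its image. No further computation is required.
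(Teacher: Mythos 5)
Your proof is correct and is essentially the paper's own argument: the paper derives this corollary by combining the fact that enforceable structures are finite-generic with Proposition \ref{chargen}(2), exactly as you do. The only point you flag (upgrading the embedding from algebraic primeness to an elementary one by identifying $B$ with its image inside a model of $T^f$) is precisely the intended use of the finite-generic characterization, so nothing further is needed.
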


The next theorem will be the key tool in showing that certain operator algebras are the enforceable models of their universal theories.  This proof will involve a bit more model-theoretic background than the rest of this paper.
 
\begin{thm}\label{primeimpliesenforceable}
Suppose that $D$ is a finite-generic structure with respect to $\Th_\forall(D)$ and the prime model of its theory.  Then $D$ is the enforceable model of $\Th_\forall(D)$.
\end{thm}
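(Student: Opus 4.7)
The plan is to exhibit a winning strategy for $\exists$ that forces the compiled structure $A$ to be isomorphic to $D$, where forcing is taken with respect to $T := \Th_\forall(D)$. The strategy proceeds by first pinning down $\Th(A)$ via the finite forcing companion and then promoting the resulting elementary embedding $D \hookrightarrow A$ to an isomorphism by enforcing atomicity.

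First I would show $T^f = \Th(D)$. Since $D$ is finite-generic, Proposition \ref{chargen} gives $D \models T^f$, so $T^f \subseteq \Th(D)$. For the reverse containment it suffices to know $T^f$ is complete, which follows from JEP for $T$. To establish JEP: any $M_1, M_2 \models T_\forall$ each embed into some $N_i \models \Th(D)$ by compactness; since $D$ is prime for $\Th(D)$, it embeds elementarily into each $N_i$, and then $N_1, N_2$ can be amalgamated over $D$ inside a sufficiently saturated elementary extension of $D$, giving a common $T$-model. With $T^f = \Th(D)$ in hand, Proposition \ref{enforcegeneric} says $\exists$ can enforce $A$ to be finite-generic, which now forces $A \equiv D$ and therefore, by primeness of $D$, provides a canonical elementary embedding $D \hookrightarrow A$.

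Second, to promote this elementary embedding to an isomorphism I would enforce that $A$ is atomic as a model of $\Th(D)$, since the separable atomic model of a complete theory is unique, so $A \cong D$ will follow. Concretely, I want each tuple of constants from $C$ to realize a principal type in $\Th(D)$. I would build this by a back-and-forth on a fixed dense enumeration $(d_n)_{n \in \mathbb{N}}$ of $D$ and an enumeration $C = \{c_n\}$. At $\exists$'s $k$-th turn, after $\forall$ plays $p_{2k}$, use Lemma \ref{existentialforcing} to extend the condition to commit the currently mentioned tuple of constants to being matched, within tolerance $1/k$, to a specific tuple of $D$ that (forth) contains $d_0,\ldots,d_k$ and (back) includes a realizer in $D$ of the type the accumulated play has assigned to $c_k$ over the previously matched constants. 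The Conjunction Lemma allows these countably many back-and-forth tasks to be interleaved with the enforcement of finite-genericity.

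The principal obstacle is the back direction of the back-and-forth under adversarial play: when $\forall$ introduces new constants with new atomic constraints, one must convert these into an existential condition over the image of the matched tuple in $D$ and realize it \emph{in $D$}. Atomicity of $D$ is exactly the right leverage here: the matched tuple has a principal type in $\Th(D)$, so any further consistent atomic constraint extends this to a formula satisfiable in models of $\Th(D)$ extending the matched tuple, and primeness of $D$ then delivers a realizer inside $D$. Handling this step coherently with finite-genericity and with the bookkeeping of the partial elementary correspondence $D \to A$—and in particular checking that each individual back-and-forth demand is genuinely enforceable, not just satisfiable—is the most delicate point of the construction.
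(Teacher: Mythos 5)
Your first step is correct and is essentially the paper's first paragraph: $\Th_\forall(D)$ has JEP (all of its models embed into models of the complete theory $\Th(D)$, and those amalgamate), so $T^f$ is complete; since $D$ is finite-generic, Proposition \ref{chargen} gives $D\models T^f$, hence $T^f=\Th(D)$, and enforcing finite-genericity enforces $A\equiv D$. The gap is in your second step. The task ``commit the currently mentioned tuple of constants to being matched, within tolerance $1/k$, to a specific tuple $\bar d$ of $D$'' is not something Lemma \ref{existentialforcing} can deliver: metric proximity of $\tp^A(\bar c)$ to $\tp^D(\bar d)$ is captured by a \emph{single} formula only because $\tp^D(\bar d)$ is isolated, i.e.\ there is a logically open set $[\theta<\delta]$ inside the $1/k$-ball; but $\theta$ is an arbitrary formula, neither quantifier-free (so it cannot be written into a condition) nor existential (so Lemma \ref{existentialforcing} does not apply). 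This is precisely where the hypothesis that $D$ is finite-generic must enter a second time, and your step 2 never uses it: since truth in $D^+$ coincides with forcing from $\appdiag(D^+)$, and since $\forall$'s move is satisfiable in $D^{\u}$ and hence (being open, and using Lemma \ref{homog}) may be taken inside $\appdiag(D^+)$, player $\exists$ can respond with a condition $q\subseteq\appdiag(D^+)$ with $q\Vdash^g\theta(c)<\delta$. Without this, you have no mechanism for pinning the type of $\bar c$ near the type of any tuple of $D$.

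More fundamentally, the in-game back-and-forth bookkeeping cannot be maintained against $\forall$. Later moves of $\forall$ only have to be consistent with the finitely many \emph{open} conditions played so far, not with the exact (even quantifier-free) type of the tuple of $D$ you have matched; so $\forall$ may legally play constraints that are incompatible with $\tp^D(\bar d)$ (e.g.\ pinning some term's value slightly off its value at $\bar d$ but within the tolerances already played). You must then re-match, and nothing bounds how far the new matched tuple lies from the old one, so both the Cauchy control needed for the correspondence to converge and the ``forth'' guarantee that $d_0,\dots,d_k$ remain in the range are lost. Atomicity of $D$ gives realizers in $D$ of \emph{consistent} principal types, but it does not make $\forall$'s new constraints consistent with your earlier commitment, which is the actual obstruction. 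The paper's proof avoids building any matching during the game: for each tuple $c$ of constants and each $m$ it merely enforces that $\tp^A(c)$ lands in a logically open set contained in the $1/m$-ball around the isolated type $\tp^D(c^D)$ (via finite-genericity of $D$ as above), whence every tuple of constants in the compiled structure realizes a type in the metric closure of the isolated types, i.e.\ an isolated type; then $A$ is a separable atomic model of $\Th(D)$ and $A\cong D$ by uniqueness of the separable atomic (prime) model, the back-and-forth being carried out off-line between two atomic models rather than against an adversary.
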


\begin{proof}
Since being finite-generic is enforceable and any two finite-generic models with respect to $\Th_\forall(D)$ are elementarily equivalent (as $\Th_\forall(D)$ has JEP), it is enforceable that the compiled structure is a model of $\Th(D)$.

Let $S_n^i(\Th(D))$ denote the set of isolated $n$-types in $\Th(D)$, a closed subset of $S_n(\Th(D))$.  Let $c$ be an $n$-tuple of distinct constants and let $m\geq 1$ be fixed.  It is enough to show that we can enforce that, in the compiled structure, the type realized by the interpretations of $c$ is within $1/m$ of $S_n^i(\Th(D))$.  Indeed, by taking the conjunction of these countably many requirements, we can enforce that the compiled structure will be an extra canonical structure that is a model of $\Th(D)$ and that, for every $n$, a dense set of $n$-tuples realize isolated types, whence they all do; consequently, the compiled structure will be a separable, atomic model of $\Th(D)$ and hence isomorphic to $D$. 

Fix an $n$-tuple $c$ of distinct constants and $m\geq 1$.  We now describe the strategy $\exists$ can use to enforce that the type of $c$ in the compiled structure is within $1/m$ of $S^i_n(\Th(D))$.  Suppose that $\forall$ plays $p_0=\{\varphi_i(c,d)<\epsilon_i \ : \ i=1,\ldots k\}$, where $d$ is a tuple of distinct constants disjoint from $c$.  By homogeneity, we can assume that $p_0\subseteq \appdiag(D)$.  Let $[\theta(x)< \delta]$ be a logically open set contained in the ball around $\tp^D(c^D)$ of radius $1/m$.  Let $q\subseteq \Diag(D)$ be such that $q\Vdash^g \theta(c)<\delta$.  Then $p:=p_0\cup q$ is a condition extending $p_0$ and $p\Vdash^g \theta(c)<\delta$.  Thus, in the compiled structure $A$, we have that $d(\tp^A(c^A),\tp^D(c^D))< 1/m$, as desired.
\end{proof}

\begin{cor}
Suppose that $D$ is a finite-generic structure with respect to $\Th_\forall(D)$ and an algebraically prime model of its theory.  Then $D$ is the enforceable model of $\Th_\forall(D)$.
\end{cor}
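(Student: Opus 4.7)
The plan is to reduce this corollary to Theorem \ref{primeimpliesenforceable} by upgrading "algebraically prime" to "prime" using the assumption that $D$ is finite-generic with respect to $T:=\Th_\forall(D)$. Once primeness is established, the previous theorem applies verbatim and gives the conclusion.

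First I would invoke Proposition \ref{chargen} applied to the theory $T=\Th_\forall(D)$. Since $D$ is finite-generic with respect to $T$, clause (2) of that proposition tells us two things: (a) $D\models T^f$, which immediately yields $T^f\subseteq \Th(D)$; and (b) whenever $D\subseteq C'\models T^f$, we have $D\preceq C'$.

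Next I would show $D$ is the prime model of its theory. Let $C\equiv D$ be arbitrary. By (a), $C\models T^f$. Since $D$ is algebraically prime, there is an embedding $f\colon D\hookrightarrow C$; identifying $D$ with $f(D)$, we have $D\subseteq C\models T^f$, so by (b) this embedding is elementary. Since $C\equiv D$ was arbitrary, $D$ is the prime model of $\Th(D)$.

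Finally, I would apply Theorem \ref{primeimpliesenforceable} directly: $D$ is a finite-generic structure with respect to $\Th_\forall(D)$ and is the prime model of its theory, so $D$ is the enforceable model of $\Th_\forall(D)$. I do not expect any real obstacle here; the whole content of the argument is the one-line observation that any $C\equiv D$ already models $T^f$ (because $D$ itself does), so the hypothesis of Proposition \ref{chargen}(2) is automatically available, turning an abstract embedding into an elementary one.
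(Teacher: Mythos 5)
Your argument is correct and is exactly the intended route: the paper leaves this corollary to follow from Theorem \ref{primeimpliesenforceable} together with the earlier observation (via Proposition \ref{chargen}) that a finite-generic, algebraically prime structure is in fact prime, which is precisely your upgrade step using $D\models T^f$ and elementary equivalence to get $C\models T^f$. No gaps; this matches the paper's approach.
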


\subsection{Model companions and $T^f$}

We end this section by mentioning the connection between finite-generic structures and model companions.  Recall that the theory $T'$ is a \emph{model companion} of the theory $T$ if $T_\forall=T'_\forall$ and $T'$ is model-complete, i.e. every embedding between models of $T'$ is elementary.  We note that $T$ has at most one model companion.  If $T$ is $\forall\exists$-axiomatizable, then $T$ has a model companion if and only if the class of e.c. models of $T$ is the class of models of some first-order theory, which is then necessarily the model companion of $T$.  We leave the proof of the following proposition to the reader.

\begin{prop}
The following are equivalent:
\begin{enumerate}
\item $T$ has a model companion.
\item $T^f$ is the model companion of $T$.
\item Every model of $T^f$ is finite-generic.
\end{enumerate}
\end{prop}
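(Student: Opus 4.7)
The plan is to prove the cycle (2) $\Rightarrow$ (1) $\Rightarrow$ (3) $\Rightarrow$ (2). The first implication is immediate from the definition of a model companion. For (3) $\Rightarrow$ (2), Corollary \ref{mutual} already supplies $(T^f)_\forall = T_\forall$, so only model-completeness of $T^f$ needs checking: given an embedding $B \subseteq C$ between models of $T^f$, hypothesis (3) says $B$ is finite-generic, so Proposition \ref{chargen} gives $B \preceq C$. Hence $T^f$ is the model companion of $T$.

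The substantive implication is (1) $\Rightarrow$ (3). Let $T'$ denote the model companion of $T$, so $T_\forall = T'_\forall$ and $T'$ is model-complete. The heart of the argument is to show that \emph{every finite-generic structure is already a model of $T'$}. By Corollary \ref{genec}, a finite-generic $B$ is an e.c.\ model of $T_\forall = T'_\forall$. To upgrade this to $B \models T'$, I would run a chain argument in the spirit of the proof of Corollary \ref{ecinfg}: pick an extension $B \subseteq C \models T'$ and interleave the chain
\[
B \subseteq C \hookrightarrow B^\u \subseteq C^\u \hookrightarrow B^{\u\u} \subseteq C^{\u\u} \hookrightarrow \cdots
\]
using that each embedding $C^{(\u^n)} \hookrightarrow B^{(\u^{n+1})}$ comes from e.c.-ness of $B$ in $T_\forall$. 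Each step $B^{(\u^n)} \preceq B^{(\u^{n+1})}$ is elementary because $B$ is finite-generic and its ultrapowers model $T^f$, and each step $C^{(\u^n)} \preceq C^{(\u^{n+1})}$ is elementary by model-completeness of $T'$. The common union is thus elementary both over $B$ and over $C$, and since $C \models T'$ the union does too; hence $B \models T'$.

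Once this is established, Proposition \ref{enforcegeneric} (enforceability of finite-genericity) together with the Conjunction Lemma implies that each closed condition $\sigma = 0 \in T'$ is enforceable, i.e. $T' \subseteq T^f$ as theories, so every model of $T^f$ is a model of $T'$. Now fix $B \models T^f$; then $B \models T_\forall$, and for any extension $B \subseteq C \models T^f$ both $B$ and $C$ are models of $T'$, so model-completeness of $T'$ yields $B \preceq C$. By Proposition \ref{chargen}, $B$ is finite-generic.

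The main obstacle is the chain argument establishing that every finite-generic structure models $T'$; the rest of the proof is formal bookkeeping that reuses enforceability of finite-genericity to convert the abstract hypothesis (1) into the theory inclusion $T' \subseteq T^f$ and then feeds this into the Proposition \ref{chargen} characterization.
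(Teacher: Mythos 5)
Your argument is correct, and since the paper explicitly leaves this proposition to the reader there is no official proof to compare against; your cycle (2)$\Rightarrow$(1)$\Rightarrow$(3)$\Rightarrow$(2), with (3)$\Rightarrow$(2) assembled from Corollary \ref{mutual} and Proposition \ref{chargen}, and (1)$\Rightarrow$(3) funneled through enforceability of finite-genericity (Proposition \ref{enforcegeneric}) and back through Proposition \ref{chargen}, is exactly the intended use of the paper's tools. One remark on the step you call the heart of the matter: the appeal to finite-genericity inside the chain is not actually needed. In your chain $B\subseteq C\hookrightarrow B^{\mathcal U}\subseteq C^{\mathcal U}\hookrightarrow\cdots$, the connecting map $B\to B^{\mathcal U}$ is the inclusion $B\subseteq C$ followed by an embedding that restricts to the diagonal on $B$, so it \emph{is} the diagonal map and is elementary by \L o\'s alone (and its successive ultrapowers are ultrapowers of elementary maps, hence elementary); the $C$-side is handled by model-completeness of $T'$ as you say. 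Thus all you need from $B$ is that it is an e.c.\ model of $T_\forall$ (Corollary \ref{genec}), and the chain proves the more general classical fact that when a model companion $T'$ exists, every e.c.\ model of $T_\forall$ models $T'$; combined with enforceability of being e.c.\ this gives $T'\subseteq T^f$ directly, slightly streamlining your bookkeeping. Finally, do record the standard compactness fact that a model of $T'_\forall=T_\forall$ embeds into a model of $T'$, which is what supplies the $C$ starting your chain (the paper uses the same fact in the proof of Corollary \ref{mutual}); with that noted, the proof is complete.
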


In particular, when $T$ has a separably categorical model companion, then the unique separable e.c. model of $T$ is necessarily enforceable.  While this phenomenon is rare in analysis, there are a few notable examples:

\begin{example}
Let $T$ be the universal theory of Banach spaces.  Then the \emph{Gurarij Banach space} $\mathbb{G}$ is the unique separable e.c. Banach space and is thus the enforceable Banach space.
\end{example}

\begin{example}
Let $T$ be the universal theory of unital abelian \cstar-algebras.  Then $C(2^\n)$ is the unique separable e.c. unital abelian \cstar-algebra and is thus the enforceable model. 
\end{example}

\section{The pseudoarc}

The original motivation for this work actually stemmed from studying the model theory of the \emph{pseudoarc} $\mathbb{P}$ and in particular trying to establish Corollary \ref{pseudoprime} below.  We recall that a \emph{continuum} is a connected compact Hausdorff space.  Note then that a compact space $X$ is a continuum if and only if $C(X)$ is projectionless.  The class of unital projectionless abelian \cstar-algebras is universally axiomatized by an $L$-theory $T$, where $L$ is the language of \cstar-algebras.  Forcing in this section is relative to the aformentioned $T$.

K.P. Hart proved the following striking fact (\cite[Lemma 2.1]{hart}) about $T$ (although not in this terminology):

\begin{fact}\label{hartfact}
If $C(X),C(Y)\models T$ are both infinite-dimensional (i.e. neither $X$ nor $Y$ are a single point), then $\Th_\forall(C(X))=\Th_\forall(C(Y))$.
\end{fact}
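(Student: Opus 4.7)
The plan is to use Gelfand duality to translate $\Th_\forall(C(X))=\Th_\forall(C(Y))$ into a geometric statement about continuous images of $X$ and $Y$ in polydisks, and then to show by a topological approximation argument that these images fill out the same Hausdorff-closed class for every non-degenerate continuum.

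First I would fix a universal $L$-sentence $\sigma=\sup_{\vec x}\varphi(\vec x)$ with $\vec x=(x_1,\ldots,x_n)$ ranging over the closed unit ball, and reduce the problem to showing $\sigma^{C(X)}=\sigma^{C(Y)}$ (the equivalence of the two universal theories follows by sliding $\sigma$ by scalar constants). Under Gelfand duality, a tuple $\vec f \in C(X)^n$ of norm-one elements corresponds to a continuous map $X\to\overline{\mathbb{D}}^n$; setting $Z:=\vec f(X)$, one has $\|p(\vec f)\|_{C(X)}=\sup_{z\in Z}|p(z)|$ for every $*$-polynomial $p$. Hence $\varphi(\vec f)$ depends on $\vec f$ only through $Z$, via some function $\widetilde{\varphi}(Z)$, and
$$\sigma^{C(X)}=\sup\{\widetilde{\varphi}(Z):Z\subseteq\overline{\mathbb{D}}^n \text{ a continuous image of } X\}.$$
Since each map $Z\mapsto \sup_{z\in Z}|p(z)|$ is continuous on the hyperspace of nonempty compact subsets of $\overline{\mathbb{D}}^n$ with the Hausdorff metric, and $\widetilde{\varphi}$ is built from such maps via the bounded continuous connectives used in $\varphi$, the function $\widetilde{\varphi}$ is itself Hausdorff-continuous.

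The crux is then a purely topological density claim: for every non-degenerate continuum $X$, the continuous images of $X$ in $\overline{\mathbb{D}}^n$ are Hausdorff-dense in the collection of all non-degenerate subcontinua of $\overline{\mathbb{D}}^n$. I would establish this in two steps. Any non-constant continuous real-valued function on $X$ has image a non-degenerate closed interval, so $X$ admits a continuous surjection onto $[0,1]$; composing with a Hahn--Mazurkiewicz surjection $[0,1]\twoheadrightarrow P$ then shows that every Peano subcontinuum $P$ of $\overline{\mathbb{D}}^n$ is a continuous image of $X$. On the other hand, Peano continua are Hausdorff-dense in the hyperspace of subcontinua of the locally connected compactum $\overline{\mathbb{D}}^n$: given a subcontinuum $Z$ and $\epsilon>0$, joining the points of a finite $\epsilon$-net in $Z$ by polygonal paths routed inside the $\epsilon$-neighborhood of $Z$ (available since $Z$ is connected and $\overline{\mathbb{D}}^n$ is convex) yields a finite graph within Hausdorff distance $\epsilon$ of $Z$.

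Combining Hausdorff-continuity of $\widetilde{\varphi}$ with the density claim, the supremum in the displayed equation equals $\sup\{\widetilde{\varphi}(Z):Z \text{ a non-degenerate subcontinuum of } \overline{\mathbb{D}}^n\}$, which manifestly does not depend on $X$. Hence $\sigma^{C(X)}=\sigma^{C(Y)}$ and the two universal theories coincide. The main obstacle is the density argument: one must route the polygonal approximations so that they remain inside $\overline{\mathbb{D}}^n$ and stay close to $Z$, but convexity of the polydisk together with connectedness of $Z$ gives enough room to do this, recovering a classical fact in continuum theory.
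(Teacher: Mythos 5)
Your proof is correct and is essentially the argument behind the cited result: the paper offers no proof of this Fact, importing it from Hart's paper, and your reduction via Gelfand duality (the value of a quantifier-free formula at a contractive tuple depends only on the joint image $Z\subseteq\overline{\mathbb{D}}^n$, Hausdorff-continuously) together with the density of Peano subcontinua among all subcontinua, plus the fact that every nondegenerate continuum surjects onto $[0,1]$ and hence onto every Peano continuum, is exactly the standard route. The one step you gloss over, routing the approximating graph inside $\overline{\mathbb{D}}^n$ and near $Z$, is easily repaired: since $Z$ is connected it is $\epsilon$-chained, so one only needs straight segments between net points at distance at most a fixed multiple of $\epsilon$, and convexity of the polydisk keeps each such short segment inside $\overline{\mathbb{D}}^n$ and within that distance of $Z$ (alternatively, push arbitrary paths in the $\epsilon$-neighborhood into the polydisk by the $1$-Lipschitz nearest-point projection).
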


The \emph{pseudoarc} $\mathbb{P}$ is the unique metrizable continuum that is both \emph{hereditarily indecomposable} and \emph{chainable}.  In \cite{bankston3}, it was shown that hereditary indecomposability is an $\forall\exists$-property of models of $T$.\footnote{If $P$ is a property of continua, we will be abusive and say that $C(X)$ has property $P$ if $X$ has property $P$.}   On the other hand, the main result of \cite{EGV} shows that chainability is a $\sup\bigvee\inf$-property.  The above discussion was then used in \cite{EGV} to prove that $C(\mathbb{P})$ is an e.c.\ model of $T$, answering a question of Bankston.\footnote{This result was motivated by a result of Bankston showing that chainability is a $\forall\bigvee\exists$ property in the language of lattice bases for continua.  We should note that neither result obviously implies the other and the continuous version was needed for the aforementioned application due to the imperfect correspondence between e.c. lattice bases and co-e.c. continua.}

Proposition \ref{enforcesbvi}, Fact \ref{hartfact}, and the fact that chainability is a $\sup\bigvee\inf$-property immediately yield:

\begin{thm}
Chainability is an enforceable property.
\end{thm}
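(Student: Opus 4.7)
The plan is to invoke Proposition \ref{enforcesbvi} directly, taking $P$ to be chainability and producing $C(\mathbb{P})$ itself as the locally universal model exhibiting $P$. Two ingredients must be checked: that chainability is a $\sup\bigvee\inf$-property, and that $C(\mathbb{P})$ is locally universal for $T$. The first is precisely the main technical result of \cite{EGV} and is imported as a black box. The pseudoarc is chainable by definition, so $C(\mathbb{P})$ certainly has the property $P$. Everything reduces, then, to verifying local universality.

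To establish local universality of $C(\mathbb{P})$, I would argue as follows. Let $C(X)$ be an arbitrary separable model of $T$, so that $X$ is a nonempty metrizable continuum. If $X$ is a single point, then $C(X) = \mathbb{C}$ embeds into $C(\mathbb{P})$, and hence into any ultrapower $C(\mathbb{P})^\u$, as the scalar subalgebra. Otherwise $C(X)$ is infinite-dimensional, so Fact \ref{hartfact} yields $\Th_\forall(C(X)) = \Th_\forall(C(\mathbb{P}))$; in particular $C(X) \models \Th_\forall(C(\mathbb{P}))$. By the standard continuous-logic correspondence, a separable structure $A$ satisfying $\Th_\forall(B)$ admits an embedding into some ultrapower $B^\u$, and so $C(X) \hookrightarrow C(\mathbb{P})^\u$, witnessing local universality.

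With both hypotheses of Proposition \ref{enforcesbvi} verified, the enforceability of chainability falls out immediately. There is no substantive obstacle beyond the two black-boxed inputs (the $\sup\bigvee\inf$ form of chainability from \cite{EGV} and Fact \ref{hartfact}); the only minor care needed is not to overlook the degenerate one-point continuum and to apply the ``models the universal theory if and only if embeds into an ultrapower'' principle in its continuous formulation. As a side remark, one could bypass Fact \ref{hartfact} entirely by observing that $T$ has JEP (since the product of two continua is a continuum, so $C(X), C(Y) \hookrightarrow C(X \times Y) \models T$) and then applying the fact recalled in the preliminaries that e.c. models of a JEP theory are locally universal, together with the theorem of \cite{EGV} that $C(\mathbb{P})$ is e.c.; but the argument via Fact \ref{hartfact} is more direct.
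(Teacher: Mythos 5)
Your proof is correct and follows essentially the same route as the paper: Proposition \ref{enforcesbvi} applied with $C(\mathbb{P})$ as the locally universal model, the $\sup\bigvee\inf$ form of chainability from \cite{EGV}, and Fact \ref{hartfact} to see that every separable model of $T$ satisfies $\Th_\forall(C(\mathbb{P}))$ and hence embeds into $C(\mathbb{P})^\u$ (with the one-point case handled trivially). The paper simply cites these three ingredients as immediately yielding the theorem, so your write-up just makes the local universality verification explicit.
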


Since being e.c. is an enforceable property and e.c. models of $T$ are hereditarily indecomposable (see \cite{bankston3} again), we have the following:

\begin{cor}
$C(\P)$ is the enforceable model of $T$.
\end{cor}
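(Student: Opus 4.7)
The plan is to combine several already-established enforceable properties via the Conjunction Lemma (Lemma \ref{properties}(4)) and then appeal to the topological characterization of the pseudoarc. Specifically, I would argue that the following four properties can each be enforced, hence so can their conjunction:
\begin{enumerate}
\item the compiled structure is extra canonical (in particular, separable);
\item the compiled structure is a model of $T$ (which equals $T_\forall$, since $T$ is universal, so this follows from Proposition \ref{enforceuniversal});
\item the compiled structure is an e.c.\ model of $T_\forall = T$ (enforceable by the proposition stated just after Lemma \ref{existentialforcing});
\item the compiled structure is chainable (the theorem immediately preceding this corollary).
\end{enumerate}

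From (1) and (2), the compiled structure has the form $A = C(X)$ for some metrizable continuum $X$ (metrizable because separable unital abelian C*-algebras correspond to compact metrizable spaces, and projectionless unital abelian means $X$ is a continuum). From (3) together with the result of Bankston cited in the excerpt, $A$ is hereditarily indecomposable, i.e.\ $X$ is a hereditarily indecomposable continuum. From (4), $X$ is chainable. By the characterization of the pseudoarc as the unique metrizable continuum that is simultaneously hereditarily indecomposable and chainable, we conclude $X \cong \mathbb{P}$, and hence $A \cong C(\mathbb{P})$.

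Since an enforceable conjunction of enforceable properties pins down the isomorphism type of the compiled structure as $C(\mathbb{P})$, this means that being isomorphic to $C(\mathbb{P})$ is an enforceable property; equivalently, $C(\mathbb{P})$ is the enforceable model of $T$.

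There is no real obstacle here: every ingredient has been assembled in the preceding sections. The only point that merits care is the implicit use of separability (to invoke the metrizable characterization of $\mathbb{P}$), which is ensured by enforcing extra canonicity, and the verification that the three non-trivial enforceable properties above have been proved in the generality of the theory $T$ under consideration — but (2) and (3) hold for any $L$-theory and (4) was just established.
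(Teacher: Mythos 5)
Your argument is essentially the paper's own: the corollary is obtained there by enforcing that the compiled structure is an e.c.\ model of $T$ (hence hereditarily indecomposable, by Bankston) and chainable, and then invoking the characterization of the pseudoarc, exactly as you do. The only cosmetic difference is that you spell out separability/extra canonicity and the passage to a metrizable continuum explicitly (and, like the paper, you leave implicit that an e.c.\ model is infinite-dimensional, so the continuum is nondegenerate), which is fine.
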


The following corollary was the original motivation for this work.

\begin{cor}\label{pseudoprime}
$C(\P)$ is the prime model of its theory.
\end{cor}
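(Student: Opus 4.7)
The plan is to apply Corollary \ref{prime} to $C(\P)$.  Since the preceding corollary has already established that $C(\P)$ is the enforceable model of $T$, it will suffice to verify that $C(\P)$ is algebraically prime, i.e., that $C(\P)$ embeds into every $B \equiv C(\P)$.

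To do this I would fix such a $B$ and pass to a separable piece that I can identify with $C(\P)$.  Applying the downward L\"owenheim--Skolem theorem of continuous logic, I would obtain a separable $B_0 \preceq B$; then $B_0 \equiv C(\P)$ is a separable unital abelian \cstar-algebra, and Gelfand duality writes $B_0 = C(X_0)$ for a metrizable continuum $X_0$.  Because hereditary indecomposability is a $\forall\exists$-property and so preserved under $\equiv$, I obtain that $X_0$ is hereditarily indecomposable.  Once I also show $X_0$ is chainable, Bing's classical uniqueness theorem will identify $X_0$ with $\P$, so that $C(\P) \cong B_0 \hookrightarrow B$, and Corollary \ref{prime} will finish the proof.

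The hardest step will be showing $X_0$ is chainable, since the axiomatization of chainability from \cite{EGV} is a $\sup\bigvee\inf$-sentence in $L_{\omega_1,\omega}$, and such sentences are not in general preserved under first-order elementary equivalence.  My plan is to exploit the metrizability of $X_0$: for a metrizable continuum, chainability is equivalent to the conjunction over $k\geq 1$ of the assertion ``for some $n \geq 1$, $X_0$ admits an open chain cover of length $n$ and mesh less than $1/k$''.  For each fixed $k$ and $n$, the existence of such a cover can be encoded in $C(X_0)$ by an existential closed condition of the form $\inf_{\bar f} F_{k,n}(\bar f) = 0$ in the \cstar-language, whose value is preserved under $\equiv$.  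Since $C(\P)$ is chainable, for each $k$ some such condition holds in $C(\P)$ for a particular $n$, and the same $n$ then works in $B_0$, yielding chainability of $X_0$ at every scale $1/k$ and hence overall.
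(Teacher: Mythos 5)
Your reduction is the same as the paper's: by Corollary \ref{prime} and the fact that $C(\P)$ is enforceable, it suffices to show $C(\P)$ is algebraically prime. But the way you try to establish algebraic primeness has a genuine gap. You want to conclude that your separable elementary submodel $B_0\preceq B$ is itself isomorphic to $C(\P)$, and the whole weight falls on transferring chainability across $B_0\equiv C(\P)$. The proposed encoding does not exist: the statement ``$X_0$ admits an open chain cover of length $n$ and mesh less than $1/k$'' is not a condition on the \cstar-algebra $C(X_0)$ alone, because the mesh refers to a metric on the spectrum, and no metric on $X_0$ is part of the structure (Gelfand duality only recovers $X_0$ up to homeomorphism). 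Any attempt to replace the metric by a finite family of functions cutting the space to a given scale reintroduces a universal quantifier over those functions, and then a countable disjunction over the length $n$ of the chain, which is exactly why the axiomatization of chainability in \cite{EGV} is a $\sup\bigvee\inf$-sentence of $L_{\omega_1,\omega}$; such sentences are not preserved under first-order elementary equivalence, as you yourself note.

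Moreover, no repair of this step is possible, because the statement you are driving at is false: your argument, if it worked, would show that every separable model of $\Th(C(\P))$ is isomorphic to $C(\P)$, i.e., that the theory is separably categorical. This contradicts K.~P.~Hart's theorem \cite{hart} that no metrizable continuum has a categorical theory; there are separable $C(Y)\equiv C(\P)$ with $Y\not\cong\P$ (so in particular chainability is not preserved by $\equiv$, since hereditary indecomposability is, being $\forall\exists$-axiomatizable). The paper avoids this entirely: it does not try to recognize $\P$ inside an arbitrary model, but instead uses Bellamy's theorem \cite{bellamy} that \emph{every} hereditarily indecomposable continuum surjects onto $\P$, so that $C(\P)$ embeds into $C(X)$ for any $C(X)\equiv C(\P)$ whether or not $X$ is chainable. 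That one topological input is what your proposal is missing.
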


\begin{proof}
By Corollary \ref{prime}, it suffices to show that $C(\P)$ is an algebraically prime model of its theory.  To see this, note that if $C(X)\equiv C(\P)$, then $X$ is hereditarily indecomposable, whence, by a result of Bellamy \cite{bellamy}, $X$ surjects onto $\P$, i.e. $C(\P)$ embeds into $C(X)$.
\end{proof}

\section{Enforceable operator algebras and embedding problems}

\subsection{II$_1$ factors}

In this subsection, $L$ denotes the language of tracial von Neumann algebras and $T$ denotes the universal $L$-theory for tracial von Neumann algebras.  (See \cite{FHS2} for details.)

\begin{thm}\label{enforceR}
$\R$ is the enforceable model of its universal theory.
\end{thm}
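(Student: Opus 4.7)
The plan is to apply Proposition~\ref{enforcesbvi} (with base theory $T':=\Th_\forall(\R)$) to two $\sup\bigvee\inf$-properties whose simultaneous enforcement pins down the compiled structure as $\R$, and then conclude via the Conjunction Lemma. To invoke Proposition~\ref{enforcesbvi}, I first note that $\R$ is locally universal for $T'$: this is the standard semantic reformulation of $T'=\Th_\forall(\R)$, namely that every separable tracial von Neumann algebra $M$ satisfies $T'$ if and only if $M$ embeds into $\R^{\mathcal{U}}$.

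Next, I would identify two $\sup\bigvee\inf$-properties of tracial von Neumann algebras that hold in $\R$. First, \emph{being a II$_1$ factor}: this class is $\forall\exists$-axiomatizable in the language of tracial von Neumann algebras by a result of Farah--Hart--Sherman, and any $\forall\exists$-axiom $\sup_x\inf_y\psi(x,y)=0$ is trivially a $\sup\bigvee\inf$-sentence in the sense of this paper, via the constant sequence $\varphi_n(x):=\inf_y\psi(x,y)$. Second, \emph{being hyperfinite}: $M$ is hyperfinite iff every element of $M$ is approximable in $\|\cdot\|_2$ by an element of a finite-dimensional $*$-subalgebra, which, after fixing for each $n$ an existential formula $\varphi_n(x)$ asserting the existence of an $n$-tuple of approximate matrix units generating a finite-dimensional $*$-subalgebra that contains $x$ within error $1/n$, translates precisely to $\sup_x\inf_n \varphi_n(x)=0$.

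Applying Proposition~\ref{enforcesbvi} to each of these two properties---using that $\R$ is a locally universal model of $T'$ satisfying both---shows that each is enforceable. The Conjunction Lemma (Lemma~\ref{properties}(4)) then gives that being a hyperfinite II$_1$ factor is enforceable. Since the compiled structure is always a separable tracial von Neumann algebra, and the separable hyperfinite II$_1$ factor is unique up to isomorphism by Murray--von Neumann, it is enforceable that the compiled structure be isomorphic to $\R$, which is the desired conclusion.

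The main delicate point is verifying the genuine $\sup\bigvee\inf$ form of hyperfiniteness: one must encode ``finite-dimensional $*$-subalgebra approximating $x$'' as an existential formula of bounded complexity, via a finite tuple of approximate matrix units together with quantifier-free error estimates on the matrix-unit relations and on the $\|\cdot\|_2$-distance from $x$ to their linear span. This encoding is already standard in the continuous model theory of II$_1$ factors, so the proof reduces essentially to a direct application of the enforceability machinery from Section~2.
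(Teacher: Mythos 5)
Your argument is essentially the paper's second (``alternative'') proof of Theorem \ref{enforceR}: there too one forces with respect to $\Th_\forall(\R)$, uses that $\R$ is tautologically locally universal for this theory, enforces hyperfiniteness via Proposition \ref{enforcesbvi}, and concludes by the Murray--von Neumann uniqueness theorem. The only structural difference is how factoriality is obtained: the paper enforces that the compiled structure is an e.c.\ model of $\Th_\forall(\R)$ and quotes \cite{FGHS} to see that such models are II$_1$ factors, whereas you enforce the $\forall\exists$-axioms for II$_1$ factors directly through Proposition \ref{enforcesbvi}. Since those axioms are degenerate $\sup\bigvee\inf$-sentences and $\R$ is a locally universal model of $\Th_\forall(\R)$ satisfying them, this variant works (modulo also enforcing, via Proposition \ref{enforceuniversal}, that the compiled structure models the universal axioms for tracial von Neumann algebras --- it is not ``always'' one, but the Conjunction Lemma absorbs this requirement).

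The one genuine inaccuracy is in the step you yourself flag as delicate: your $\sup\bigvee\inf$ rendering of hyperfiniteness quantifies over a \emph{single} element $x$. The statement ``every element of $M$ is $\|\cdot\|_2$-approximable by an element of a finite-dimensional $*$-subalgebra'' is not the local characterization of hyperfiniteness, and you give no argument that it is equivalent to it; the correct characterization asks that every finite \emph{tuple} be approximately contained in a single finite-dimensional subalgebra, and there is no evident way to recover approximants of a pair $a,b$ inside $F$ from an approximant of one element encoding them, so the single-variable condition could conceivably hold in non-hyperfinite factors. The repair is immediate and is exactly what the definition of a $\sup\bigvee\inf$-property permits: take countably many sentences, one for each tuple length $k$, of the form $\sup_{x_1,\ldots,x_k}\inf_n\varphi_{n,k}(x_1,\ldots,x_k)=0$, where $\varphi_{n,k}$ asserts the existence of approximate matrix units whose span contains each $x_i$ to within $1/n$ in $\|\cdot\|_2$; this is the same template as the UHF case in \cite{AF}, which is precisely what the paper's proof invokes. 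With that correction, your proof coincides with the paper's alternative proof.
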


\begin{proof}[Proof 1]
By Theorem \ref{primeimpliesenforceable} and the well-known fact that $\R$ is the prime model of its theory \footnote{See, for example, \cite[Remark after Lemma 3.1]{GHS}.  The main point is that every embedding $\R\hookrightarrow \R^\u$ is unitarily conjungate to the diagonal embedding, and thus elementary}, it suffices to show that $\R$ is a finite-generic model of its universal theory.  Towards this end, suppose that $A$ is a finite-generic model of $\Th_\forall(\R)$.  Then $A$ is an e.c.\ model of $\Th_\forall(\R)$, hence a II$_1$ factor (see, for example, \cite{FGHS}) and thus contains $\R$.  Since $\R$ is an e.c.\ model of its universal theory (again, see \cite{FGHS}), $\R$ is finite-generic by Corollary \ref{ecinfg}.
\end{proof}

The following alternative proof is worth pointing out.  

\begin{proof}[Proof 2]
First note that hyperfiniteness is a $\sup\bigvee\inf$-property of tracial von Neumann algebras.  (This does not seem to have appeared explicitly in the literature but the proof is the same as the fact that being UHF is a $\sup\bigvee\inf$-property of \cstar-algebras; see \cite{AF}).  Thus, by Proposition \ref{enforcesbvi}, hyperfiniteness is an enforceable property for $T$.  Since being e.c. is also enforceable, we have that we can enforce that the compiled structure be a separable, hyperfinite II$_1$ factor, whence the compiled structure must be isomorphic to $\R$ by the fundamental result of Murray and von Neumann.
\end{proof}

In what follows, let $\sigma_{\operatorname{hyp}}$ denote the supremum of the countably many $\sup\bigvee\inf$-sentences that define hyperfiniteness.  Since $T$ has JEP, there is a unique value $r$ such that $\sigma_{\operatorname{hyp}}=r$ is enforceable.  We abuse notation and write $\sigma_{\operatorname{hyp}}^{T^f}$ for this unique $r$ (even though, technically, $T^f$ is a finitary theory).  We follow this abusive practice in other contexts throughout the remainder of this section.

The following was the result announced in the introduction to the paper; forcing here is with respect to $T$, the theory of tracial von Neumann algebras.


\begin{thm}\label{CEP}
The following are equivalent:
\begin{enumerate}
\item CEP has a positive solution.
\item $\sigma^{T^f}_{\operatorname{hyp}}=0$.
\item $\R$ is enforceable.
\item$\R^\u$-embeddability is enforceable.
\end{enumerate}
\end{thm}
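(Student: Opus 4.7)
The plan is to establish the four-way equivalence via the cycle $(1) \Rightarrow (3) \Rightarrow (4) \Rightarrow (1)$, together with $(2) \Leftrightarrow (3)$. The implications $(3) \Rightarrow (4)$ and $(4) \Rightarrow (1)$ should be short: if $\R$ is enforceable then the compiled structure is trivially $\R^\u$-embeddable, while if $\R^\u$-embeddability is enforceable then Theorem \ref{enforcelocallyuniversal} gives that $\R$ is locally universal for $T$, which, since every II$_1$ factor is a model of $T$, is exactly CEP.

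For $(1) \Rightarrow (3)$, I would directly imitate Proof 2 of Theorem \ref{enforceR}, but now with $T$ in place of $\Th_\forall(\R)$. Under CEP, $\R$ is a locally universal model of $T$, and it has the $\sup\bigvee\inf$-property of hyperfiniteness; Proposition \ref{enforcesbvi} then yields that hyperfiniteness is enforceable for $T$. Combining with the enforceability of being an e.c.\ model of $T$ (which in particular forces the compiled structure to be a II$_1$ factor) and applying the Conjunction Lemma, we can enforce the compiled structure to be a separable hyperfinite II$_1$ factor; by Murray--von Neumann, this must be $\R$.

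Finally, $(3) \Rightarrow (2)$ is immediate: if the compiled structure is enforceably $\R$, then $\sigma_{\operatorname{hyp}} = 0$ holds there, so (by the uniqueness of enforceable values guaranteed by JEP) $\sigma_{\operatorname{hyp}}^{T^f} = 0$. For $(2) \Rightarrow (3)$, the condition $\sigma_{\operatorname{hyp}}^{T^f} = 0$ unpacks, by its defining abuse of notation, to the enforceability of $\sigma_{\operatorname{hyp}} = 0$, i.e.\ of hyperfiniteness, and combining with enforcing e.c.\ as in the previous paragraph again pins down $\R$. The only technical input beyond the abstract framework is the claim that hyperfiniteness is a $\sup\bigvee\inf$-property of tracial von Neumann algebras; this is asserted in the paper by analogy with the UHF case for \cstar-algebras (citing \cite{AF}), and verifying this assertion carefully is the one place where the argument relies on a statement external to the surrounding forcing theory, making it the main thing to check in a full write-up.
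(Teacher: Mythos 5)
Your proposal is correct and follows essentially the same route as the paper: the paper proves the cycle $(1)\Rightarrow(2)\Rightarrow(3)\Rightarrow(4)\Rightarrow(1)$, and your argument merely repackages the same ingredients (Proposition \ref{enforcesbvi} applied to hyperfiniteness under CEP, enforceability of being an e.c.\ model and hence a II$_1$ factor, Murray--von Neumann, and Theorem \ref{enforcelocallyuniversal} for $(4)\Rightarrow(1)$) with $(1)\Rightarrow(3)$ done directly and $(2)\Leftrightarrow(3)$ handled separately. The one point you flag for verification --- that hyperfiniteness is a $\sup\bigvee\inf$-property --- is exactly the external input the paper also relies on.
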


\begin{proof}
(1)$\Rightarrow$(2):  As in Proof 2 of Theorem \ref{enforceR}, if CEP holds, then we can enforce that the compiled II$_1$ factor is hyperfinite.
(2)$\Rightarrow$(3) follows from the fact that being a II$_1$ factor is enforceable together with the aforementioned result of Murray and von Neumann, while (3)$\Rightarrow$(4) is trivial.  Finally, (4) implies (1) holds by Theorem \ref{enforcelocallyuniversal}. 
\end{proof}

\begin{rmk}\label{canonicalfactor}
As first pointed out in \cite{FHS3}, there is a locally universal II$_1$ factor.  However, locally universal II$_1$ factors are far from unique as any separable II$_1$ factor containing a locally universal II$_1$ factor is itself locally universal.  Thus, asking whether or not $\R$ is one of the many locally universal II$_1$ factors makes the connection between CEP and model theory a bit loose.  However, an enforceable II$_1$ factor, should it exist, is a \emph{canonical} object.  Thus, asking whether or not the canonical enforceable II$_1$ factor coincides with the (arguably) canonical II$_1$ factor $\R$ seems to be a more serious connection.
\end{rmk}

\subsection{Unital \cstar-algebras}

In this subsection, $L$ denotes the language for unital \cstar-algebras.
 
Recall that a \cstar-algebra $D$ is \emph{strongly self-absorbing} (or \emph{ssa} for short) if there is an isomorphism $\phi:D\to D\times D$ such that $\phi$ and $\operatorname{id}_D\otimes 1_D$ are approximately unitarily equivalent $*$-homomorphisms.  It is a well-known consequence of the definition that every embedding $D\hookrightarrow D^\u$ is unitarily conjugate to the diagonal embedding, and thus elementary.  As a result, ssa algebras are e.c.\ models of their universal theories and the prime models of their full theories.  Particularly important ssa algebras are the Cuntz algebra $\O_2$, the universal UHF algebra $\Q$, and the Jiang-Su algebra $\mathcal{Z}$.

\begin{thm}
Strongly self absoring algebras are the enforceable models of their universal theories.
\end{thm}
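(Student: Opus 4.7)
The plan is to apply Theorem~\ref{primeimpliesenforceable}: the paragraph preceding the statement supplies the prime-model hypothesis, so what remains is to show that $D$ is finite-generic with respect to $T:=\Th_\forall(D)$. By Corollary~\ref{ecinfg}, this reduces to exhibiting a finite-generic model $B$ of $T$ that contains a copy of $D$, since the ssa hypothesis also gives that $D$ is e.c.\ in $T$ and hence automatically e.c.\ in any such $B$.

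Such a $B$ can be produced via the game. Proposition~\ref{enforcegeneric} says that being finite-generic$^+$ is enforceable, so by the Conjunction Lemma it suffices to show that the property ``the compiled structure contains an isomorphic copy of $D$'' is enforceable. Fix a dense sequence $(d_i)\subseteq D$; as the game proceeds, $\exists$ dynamically picks distinct constants $c_i\in C$ (avoiding the finitely many constants $\forall$ has used so far) and commits, across countably many turns, to all expressions of the form $|p(c_{i_1},\dots,c_{i_m})-p^D(d_{i_1},\dots,d_{i_m})|<1/n$ for $p$ a $*$-polynomial with rational coefficients and $n\geq 1$. After $\omega$ many steps these commitments force the map $c_i\mapsto d_i$ to extend to a $*$-embedding $D\hookrightarrow A(\bar p)$. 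Lemma~\ref{homog} offers an alternative route to the same freshness conclusion if one prefers a static choice of the $c_i$.

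The heart of the argument is the satisfiability step, and this is where the ssa hypothesis enters. Given $\forall$'s condition $p_0$, realized in some $A\models T$, one must realize $p_0$ together with the new commitment in a single $T$-model. Since $D$ is ssa, $T$ has JEP (any two models embed into a common $D^\u$) and $D$ is locally universal for $T$; in particular $A$ embeds into $D^\u$. Interpreting the constants from $p_0$ through this embedding, and the $c_i$ (which by construction do not occur in $p_0$) through the diagonal copy of $D\subseteq D^\u$, both $p_0$ and the new commitment are simultaneously realized in the subalgebra of $D^\u$ generated by the images of $A$ and $D$; this subalgebra is itself a model of $T$ because $T$ is universally axiomatized and thus passes to substructures of $D^\u$. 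Once this is in place, Corollary~\ref{ecinfg} makes $D$ finite-generic with respect to $T$, and Theorem~\ref{primeimpliesenforceable} delivers the conclusion. I expect the satisfiability step to be the main obstacle: one must track the freshness of the $c_i$ carefully and confirm that the amalgam of $A$ and a diagonal $D$ inside $D^\u$ is indeed the correct ambient model realizing both pieces at once.
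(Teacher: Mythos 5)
Your overall reduction is the paper's: apply Theorem \ref{primeimpliesenforceable} (ssa gives primeness), and get finite-genericity of $D$ from Corollary \ref{ecinfg} by placing $D$ inside a finite-generic model of $T=\Th_\forall(D)$, using that $D$ is an e.c.\ model of $T$ and hence e.c.\ in any such extension. The gap is in the step you flagged yourself: the claim that ``the compiled structure contains an isomorphic copy of $D$'' is enforceable by committing, on fixed constants $c_i$, to the approximate diagram of $D$ at ever finer accuracies. Your satisfiability argument (embed a model of $\forall$'s condition into $D^\u$ and read the $c_i$ off the diagonal copy of $D$) only works at the turn where the $c_i$ are first introduced, i.e.\ when they genuinely do not occur in the current condition. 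At every later turn the $c_i$ \emph{do} occur in $\forall$'s condition, and $\forall$ may have added constraints on them that are consistent with your coarse commitments but inconsistent with the finer ones you still owe. Concretely, if $\|d_1\|=1$ and you have played $|\,\|c_1\|-1\,|<1/2$, then $\forall$ may play $|\,\|c_1\|-0.6\,|<0.01$ (satisfiable), after which you can never play $|\,\|c_1\|-1\,|<1/4$; in the compiled structure the map $d_i\mapsto c_i$ need not be isometric, so no embedding of $D$ is produced. Since each of your moves is a finite set of strict inequalities, you cannot pin the values in one turn, and Lemma \ref{homog} does not protect constants once they have been played. Upgrading $2^{-k}$-approximate partial embeddings of $D$ to nearby better ones --- which is what a repaired strategy would need --- is precisely the kind of self-correction that fails for general locally universal $D$ and is where the ssa hypothesis must do real work.

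The paper closes exactly this gap with an operator-algebraic input rather than a game construction of the embedding: if $A$ is a finite-generic model of $\Th_\forall(D)$, then $A$ is e.c.\ (Corollary \ref{genec}), and by \cite[Lemma 2.3]{ssa} e.c.\ models of $\Th_\forall(D)$ are $D$-absorbing, $A\otimes D\cong A$; this hands you the copy of $D$ inside $A$, and then $D$ is e.c.\ in $A$ and Corollary \ref{ecinfg} applies. So your skeleton is right, but the enforceability claim at its heart is unjustified as stated and should be replaced by (or rederived from) the tensorial absorption of $D$ by e.c.\ models, which is where strong self-absorption genuinely enters.
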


\begin{proof}
Suppose that $D$ is an ssa algebra.  Since $D$ is the prime model of its theory, it suffices, by Proposition \ref{primeimpliesenforceable}, to show that $D$ is a finitely-generic model of $\Th_\forall(D)$.   Let $A$ be a finitely-generic model of $\Th_\forall(D)$.  By Corollary \ref{genec}, $A$ is an e.c. model of $\Th_\forall(D)$, whence $A\otimes D\cong A$ by \cite[Lemma 2.3]{ssa}.  Thus $D$ is e.c. in $A$, whence $D$ is finitely-generic by Corollary \ref{ecinfg}.
\end{proof}

\begin{proof}[Alternate proofs for $D=\O_2$ and $\Q$]
Suppose first that $D=\O_2$.  Since nuclearity is a $\sup\bigvee\inf$-property (see \cite{munster}), we can use Proposition \ref{enforcesbvi} to show that we can enforce the compiled structure to be nuclear, whence embeddable in $\O_2$.  Since the compiled structure can also be forced to be an e.c. model of $\Th_\forall(\O_2)$, it follows that the compiled structure is e.c. in $\O_2$ and thus isomorphic to $\O_2$ by \cite[Theorem 2.14]{KEP}.

In the case that $D=\Q$, we argue in the same way, using that being UHF is a $\sup\bigvee\inf$-property (see \cite{AF}).  We can thus enforce that the compiled structure be an e.c.\ subalgebra of $\Q$, which thus forces\footnote{No pun intended.} it to be isomorphic to $\Q$.
\end{proof}

Let $T$ denote the universal $L$-theory axiomatizing the class of unital \cstar-algebras.  In the rest of this subsection, forcing is with respect to $T$.

Recall that the Kirchberg Embedding Problem (KEP) asks whether every \cstar-algebra embeds into an ultrapower of $\O_2$.  The proof of the following theorem is just like the proof of Theorem \ref{CEP}.  Here, $\sigma_{\nuc}$ is the supremum of the $\sup\bigvee\inf$-sentences defining nuclearity.
\begin{prop}\label{KEP}
The following are equivalent:
\begin{enumerate}
\item KEP has a positive solution.
\item $\sigma_{\nuc}^{T^f}=0$.
\item $\O_2$ is enforceable.
\item $\O_2^\u$-embeddability is enforceable.
\end{enumerate}
\end{prop}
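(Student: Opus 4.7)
The plan is to transport the proof of Theorem \ref{CEP} essentially verbatim to the \cstar-setting, substituting $\O_2$ for $\R$, unital \cstar-algebras for tracial von Neumann algebras, nuclearity for hyperfiniteness, and the Kirchberg--Phillips-style absorption result \cite[Theorem 2.14]{KEP} (``an e.c.\ subalgebra of $\O_2$ is isomorphic to $\O_2$'') for the Murray--von Neumann uniqueness of the hyperfinite II$_1$ factor. All the \cstar-algebraic inputs required have already appeared in the alternate proof above showing that $\O_2$ is the enforceable model of $\Th_\forall(\O_2)$.

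First I would establish $(1)\Rightarrow(2)$: if KEP holds, $\O_2$ is a locally universal model of $T$, and since nuclearity is a $\sup\bigvee\inf$-property (see \cite{munster}) and $\O_2$ is nuclear, Proposition \ref{enforcesbvi} makes each defining $\sup\bigvee\inf$-sentence enforceable to equal $0$, so taking the supremum yields $\sigma_\nuc^{T^f}=0$. For $(2)\Rightarrow(3)$, the Conjunction Lemma (Lemma \ref{properties}(4)) lets me simultaneously enforce nuclearity of the compiled structure (which makes it embeddable into $\O_2$, as in the alternate proof) and e.c.-ness as a model of $T$; the compiled structure is then an e.c.\ subalgebra of $\O_2$, hence isomorphic to $\O_2$ by \cite[Theorem 2.14]{KEP}. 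The implication $(3)\Rightarrow(4)$ is immediate, since enforcing an isomorphism to $\O_2$ a fortiori enforces $\O_2^\u$-embeddability. Finally, $(4)\Rightarrow(1)$ is a direct application of Theorem \ref{enforcelocallyuniversal} with $B=\O_2$: enforceability of $\O_2^\u$-embeddability says precisely that $\O_2$ is locally universal for $T$, which is the statement of KEP.

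I do not anticipate any genuine obstacle; the argument is a formal cyclic repackaging, and every non-routine \cstar-algebraic fact used --- the $\sup\bigvee\inf$-presentation of nuclearity, the embedding of nuclear algebras into $\O_2$, and the $\O_2$-absorption uniqueness \cite[Theorem 2.14]{KEP} --- has already been invoked earlier in this subsection. If anything in the transcription were to require care, it would be the $(2)\Rightarrow(3)$ step, where one must confirm that enforcing the single closed condition $\sigma_\nuc=0$ actually captures nuclearity in the sense used by \cite[Theorem 2.14]{KEP}; this is handled exactly as in the corresponding step of Theorem \ref{CEP}, where $\sigma_{\operatorname{hyp}}$ encodes the countably many $\sup\bigvee\inf$-sentences defining hyperfiniteness.
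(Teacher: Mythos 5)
Your proposal is correct and is essentially the paper's own argument: the paper simply notes that the proof is "just like" that of Theorem \ref{CEP}, using Proposition \ref{enforcesbvi} for nuclearity, the enforceability of being e.c.\ plus \cite[Theorem 2.14]{KEP} in place of Murray--von Neumann, and Theorem \ref{enforcelocallyuniversal} for $(4)\Rightarrow(1)$, exactly as you do. No issues to flag.
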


There is one more equivalence we can add to the previous proposition, but first some terminology.  We say that a \cstar-algebra $A$ \emph{has a square root} if there is a \cstar-algebra $B$ such that $A\cong B\otimes B$ (minimal tensor product).  Clearly ssa algebras have square roots.  The following is a remark in \cite{KEP}; for the convenience of the reader, we repeat the statement and proof here:

\begin{lemma}\label{squareroot}
Suppose that $A$ is an e.c. \cstar-algebra that has a square root.  Then $A$ is simple and nuclear (and hence isomorphic to $\O_2$).
\end{lemma}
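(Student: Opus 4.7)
The plan is to establish simplicity and nuclearity of $A$ in two separate steps, and then invoke Kirchberg's $\O_2$-embedding theorem together with \cite[Theorem 2.14]{KEP}, which identifies $\O_2$ as the unique e.c.\ unital \cstar-algebra admitting an embedding into $\O_2$. The parenthetical conclusion "isomorphic to $\O_2$" then follows formally once simplicity and nuclearity are in hand.

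For simplicity, the square root assumption is actually not required: it is a standard consequence of the model theory of \cstar-algebras that every e.c.\ unital \cstar-algebra is simple. The quick conceptual reason is that, for a nonzero positive $a\in A$, the assertion that $1$ lies in the closed two-sided ideal generated by $a$ is expressible as the existential statement $\inf_{x_1,\dots,x_n}\|\sum_i x_i^* a x_i - 1\| = 0$; one can always extend $A$ by an amalgamated construction (for instance, an amalgamated free product with a suitable simple \cstar-algebra) to an extension in which $a$ becomes full. By existential closedness, the witnesses must already be present in $A$.

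For nuclearity, the square root $A = B\otimes_{\min} B$ is central. The canonical surjection $\pi\colon B\otimes_{\max} B \twoheadrightarrow B\otimes_{\min} B = A$ has kernel an ideal of $B\otimes_{\max}B$ whose image is the simple algebra $A$; so $\ker\pi = 0$, and the two tensor norms on $B\odot B$ coincide. I would then attempt to bootstrap this self-coincidence of tensor norms to full nuclearity of $A$ using the e.c.\ property of $A$: given an arbitrary \cstar-algebra $C$ and an element of $A\otimes_{\max}C$, one approximates via existential witnesses living in $A$ and uses the identification $B\otimes_{\max}B = B\otimes_{\min}B$ to conclude that $A\otimes_{\max}C = A\otimes_{\min}C$, i.e., that $A$ is nuclear.

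Once $A$ is simple and nuclear, Kirchberg's embedding theorem yields $A\hookrightarrow \O_2$, whereupon \cite[Theorem 2.14]{KEP} forces $A \cong \O_2$. The main obstacle I anticipate is precisely the bootstrap from the equality of the min and max tensor norms on $B\odot B$ to full nuclearity of $A$: this equality alone is a weaker condition than nuclearity, and cleanly extracting the required rigidity for arbitrary tensor coefficients $C$ will need a careful deployment of the existential closedness of $A$ in every \cstar-extension, not just in tensor powers of itself.
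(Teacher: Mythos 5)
There is a genuine gap in your nuclearity step, and it is the heart of the lemma. From simplicity of $A=B\otimes_{\min}B$ you infer that the canonical surjection $\pi\colon B\otimes_{\max}B\to B\otimes_{\min}B$ has zero kernel ``because the image is simple.'' This is a non sequitur: simplicity of a quotient says nothing about the kernel of the quotient map, and the conclusion is in fact false in general --- for $B=C^*_r(F_2)$ the algebra $B\otimes_{\min}B$ is simple (minimal tensor products of simple \cstar-algebras are simple), yet by Takesaki's classical example the min and max norms differ on $B\odot B$. So coincidence of the two norms on $B\odot B$ cannot be extracted from simplicity of $A$. Moreover, even if one had $B\otimes_{\max}B=B\otimes_{\min}B$, this is, as you yourself note, far weaker than nuclearity of $A$, and the proposed ``bootstrap'' via existential closedness with arbitrary tensor coefficients $C$ is only a hope, not an argument. (Your simplicity step is fine in spirit --- e.c.\ \cstar-algebras are indeed simple, granting that every separable unital \cstar-algebra embeds unitally into a simple one --- but, as explained below, it is not actually needed.)

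The paper's proof runs along entirely different lines and avoids tensor-norm considerations. By \cite{KEP}, every automorphism of an e.c.\ \cstar-algebra is approximately inner; applying this to the flip automorphism $a\otimes b\mapsto b\otimes a$ of $A\cong B\otimes B$ shows that $B$ has approximately inner flip. This property passes from $B$ to $A=B\otimes B$ by \cite{TW}, and having approximately inner (half) flip already implies that $A$ is simple \emph{and} nuclear \cite{TW}; the parenthetical $A\cong\O_2$ then follows, as in your final step, from Kirchberg's $\O_2$-embedding theorem together with \cite[Theorem 2.14]{KEP}. Note that the flip argument delivers simplicity and nuclearity simultaneously, which is why no separate simplicity argument appears in the paper.
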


\begin{proof}
Suppose that $B$ is a square root of $A$.  A consequence of being existentially closed is that every automorphism of $A$ is approximately inner (see \cite{KEP}).  In particular, the \emph{flip automorphism} $a\otimes b\mapsto b\otimes a:A\to A$ is approximately inner; in other words, $B$ has \emph{approximately inner flip}.  This property passes to $A$ as well \cite{TW}; since having approximately inner half flip implies that $A$ is simple and nuclear (see \cite{TW} again), the result follows.
\end{proof}

\begin{cor}
KEP has a positive solution if and only if having a square root is an enforceable property of the compiled structure.
\end{cor}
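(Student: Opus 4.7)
The plan is to prove both directions using Proposition \ref{KEP}, which reduces KEP to the enforceability of $\O_2$, together with Lemma \ref{squareroot}, which characterizes e.c. \cstar-algebras with square roots as being isomorphic to $\O_2$.

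For the forward direction, I would assume KEP holds. By Proposition \ref{KEP}, $\O_2$ is then enforceable. Since $\O_2$ is strongly self-absorbing, $\O_2 \cong \O_2 \otimes \O_2$, so $\O_2$ itself has a square root. Hence the property ``the compiled structure has a square root'' is implied by the property ``the compiled structure is isomorphic to $\O_2$'', and therefore it is also enforceable.

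For the reverse direction, suppose that having a square root is an enforceable property. Recall that being an e.c. model of $T_\forall$ is also enforceable (Proposition \ref{enforceuniversal} together with the general enforceability of the e.c. property established earlier in the paper). By the Conjunction Lemma (Lemma \ref{properties}(4)), we may simultaneously enforce that the compiled structure is both an e.c. unital \cstar-algebra and has a square root. By Lemma \ref{squareroot}, any such structure is isomorphic to $\O_2$. Thus $\O_2$ is enforceable, and invoking Proposition \ref{KEP} once more, KEP has a positive solution.

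No step looks like it will be a serious obstacle: the argument is a direct packaging of Proposition \ref{KEP} and Lemma \ref{squareroot} via the Conjunction Lemma. The only mild point worth double-checking is that the enforceability of ``has a square root'' is being taken in the sense of a property of the compiled $L(C)$-structure (equivalently, of its reduct), and that combining it with enforceable e.c.-ness is legitimate — which it is, since the Conjunction Lemma applies to any countable collection of properties separately enforceable against the same game.
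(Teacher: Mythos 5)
Your proof is correct and follows essentially the same route the paper intends: the forward direction packages Proposition \ref{KEP} with the fact that $\O_2\cong\O_2\otimes\O_2$, and the reverse direction combines enforceability of the e.c.\ property with the square-root hypothesis via the Conjunction Lemma and then applies Lemma \ref{squareroot} to get $\O_2$, hence KEP. No gaps.
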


\begin{rmk}
The previous discussion also makes sense in the II$_1$ factor category.  In that context, Connes showed that $\R$ is the only separable II$_1$ factor with ultraweak approximately inner flip.  The above arguments thus show that CEP has a positive solution if and only if having a square root is an enforceable property of the compiled II$_1$ factor.
\end{rmk}

In \cite[Section 7]{FAE}, it was shown that the \emph{local lifting property}, or LLP for short, of Kirchberg is captured by a family $(\sigma_m)$ of $L_{\omega_1,\omega}$-sentences\footnote{It is left as an open question there whether or not LLP is a $\sup\bigvee\inf$-property.}:  a \cstar-algebra $A$ has the LLP if and only if $(\sup_m \sigma_m)^A=0$.  Let $\sigma_{\LLP}:=\sup_m \sigma_m$.  We can thus ask:  what is $\sigma_{\LLP}^{T^f}$?

First suppose that $\sigma_{\LLP}^{T^f}>0$, so we can enforce that the compiled structure does not have LLP.  Since the compiled model can also be forced to be e.c., and thus has the \emph{weak expectation property}, or WEP for short (see \cite{KEP}), we get that the compiled structure can be forced to have WEP and not LLP, yielding a (potentially) new example of a \cstar-algebra with WEP but not LLP.  (See \cite{jungepisier} for the first example.)

Next suppose that $\sigma_{\LLP}^{T^f}=0$.  If $\sigma_{\nuc}^{T^f}=0$, then KEP has a positive solution.  Otherwise, $\sigma_{\nuc}^{T^f}>0$, whence we can enforce that the compiled structure is not nuclear but has both LLP and WEP, providing a positive answer to the so-called \emph{weak QWEP conjecture} (see \cite{FAE} for more on this).


\subsection{Unital stably finite \cstar-algebras}  Once again, $L$ denotes the language for unital \cstar-algebras.  Except for the last results in this subsection, $T$ now denotes the universal $L$-theory axiomatizing the class of unital, \emph{stably finite} \cstar-algebras.

Recall that the MF problem asks whether or not every stable finite \cstar-algebra embeds into an ultrapower of the universal UHF algebra $\Q$.  In what follows, $\sigma_{\operatorname{UHF}}$ is the supremum of the $\sup\bigvee\inf$-sentences defining being UHF and $\sigma_{\operatorname{QD}}$ is the supremum of the $\sup\bigvee\inf$-sentences defining being quasidiagonal (see \cite{munster}).

\begin{thm}
The following are equivalent:
\begin{enumerate}
\item The MF problem has a positive solution.
\item $\sigma_{\operatorname{UHF}}^{T^f}=0$.
\item $\Q$ is enforceable.
\item $\sigma_{\operatorname{QD}}^{T^f}=0$.
\item $\Q^\u$-embeddability is enforceable.
\end{enumerate}
\end{thm}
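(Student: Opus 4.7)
The plan is to adapt the template of the proofs of Theorem \ref{CEP} and Proposition \ref{KEP}, establishing the cycle of implications $(1)\Rightarrow(2)\Rightarrow(3)\Rightarrow(4)\Rightarrow(5)\Rightarrow(1)$.

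The implication $(1)\Rightarrow(2)$ proceeds exactly as in Theorem \ref{CEP}: if MF has a positive solution, then $\Q$ is a locally universal model of $T$; since being UHF is a $\sup\bigvee\inf$-property (see \cite{AF}) and $\Q$ is UHF, Proposition \ref{enforcesbvi} yields that being UHF is enforceable, so $\sigma_{\operatorname{UHF}}^{T^f}=0$. For $(2)\Rightarrow(3)$, I would use the Conjunction Lemma to combine the enforceability of being UHF with the enforceability of being an e.c.\ model of $T_\forall$, arranging that the compiled structure $A$ is a separable UHF algebra which is e.c.\ in the class of stably finite unital \cstar-algebras. For each $n$, $A\otimes M_n$ is a stably finite extension of $A$ containing a unital copy of $M_n$; since the existence of an $n\times n$ system of matrix units is a quantifier-free existential condition, e.c.\ forces a unital embedding $M_n\hookrightarrow A$ for every $n$. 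Combined with $A$ being UHF, this forces $A\cong\Q$, the universal UHF algebra.

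Step $(3)\Rightarrow(4)$ is immediate since $\Q$ is quasidiagonal. For $(4)\Rightarrow(5)$, I would invoke Voiculescu's characterization of quasidiagonality: a separable unital QD \cstar-algebra embeds in some ultraproduct $\prod_\v M_{n_k}$ of matrix algebras, and since each $M_{n_k}$ embeds unitally in $\Q$, this ultraproduct embeds in $\Q^\v$; by the ultrafilter-independence of ultrapower-embeddability for separable structures noted in the preliminaries, we conclude $A\hookrightarrow\Q^\u$, so enforcing QD also enforces $\Q^\u$-embeddability. Finally, $(5)\Rightarrow(1)$ is immediate from Theorem \ref{enforcelocallyuniversal}: if $\Q^\u$-embeddability is enforceable, then $\Q$ is locally universal for $T$, which is precisely the positive resolution of the MF problem.

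The main obstacle is $(2)\Rightarrow(3)$: the $\sup\bigvee\inf$ axiomatization of being UHF only forces the compiled structure to be \emph{some} UHF algebra, so one must rule out that it corresponds to a proper supernatural number. This is where the existentially closed extensions $A\otimes M_n$ are essential, forcing every matrix algebra to embed unitally and thus pinning the supernatural number of $A$ down to the universal one. The remaining implications are routine adaptations of the arguments already developed for the CEP and KEP cases.
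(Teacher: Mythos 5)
Your proposal is correct and follows essentially the route the paper intends (the proof is left implicit there, modeled on Theorem \ref{CEP} and the alternate proof that $\Q$ is enforceable for $\Th_\forall(\Q)$): enforce UHF via Proposition \ref{enforcesbvi}, combine with enforceability of being e.c.\ to pin the compiled UHF algebra down to $\Q$, use quasidiagonality of $\Q$ and the ultraproduct characterization of QD for (3)$\Rightarrow$(4)$\Rightarrow$(5), and invoke Theorem \ref{enforcelocallyuniversal} for (5)$\Rightarrow$(1), exactly as the paper's remark indicates is necessary in the absence of JEP. Your (2)$\Rightarrow$(3) step, using $A\otimes M_n$ and stability of matrix-unit relations to force unital copies of every $M_n$, is just a self-contained rendering of the paper's ``e.c.\ subalgebra of $\Q$ must be $\Q$'' argument, so no substantive difference.
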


\begin{rmk}
As pointed out in \cite{Robinson}, it is currently unknown as to whether or not the class of unital, stably finite \cstar-algebras has JEP.  Thus, in the previous proposition, it is unknown as to whether or not $\sigma_{\operatorname{UHF}}^{T^f}$ even exists!  Similarly, while in the cases of CEP and KEP, we could have proven that (5) implies (1) using that being e.c. is enforceable and using JEP, we can not use such an argument in the case of the MF problem, and thus, at the moment, the use of Theorem \ref{enforcelocallyuniversal} really is needed.
\end{rmk}

The \emph{quasidiagonality problem} (or QD problem) asks whether or not every stably finite \emph{nuclear} algebra is quasidiagonal (equivalently, by the Choi-Effros Lifting Theorem, $\Q^\u$-embeddable).  The best progress towards resolving the QD problem is the main result of \cite{TWW}, which states that a unital, simple, stably finite, nuclear algebra satisfying the \emph{Universal Coefficient Theorem} (UCT) is $\Q^\u$-embeddable.  Since being simple and nuclear are both $\sup\bigvee\inf$-properties (see \cite{munster}), if we assume that every nuclear \cstar-algebra has the UCT, then we can add $$\sigma_{\operatorname{\nuc}}^{T^f}=\sigma_{\operatorname{simple}}^{T^f}=0$$ to the above list of equivalent formulations of the MF problem.

As pointed out in \cite{Robinson}, the stably finite version of Lemma \ref{squareroot} holds:  if $A$ is a stably finite \cstar-algebra that is e.c.\ for the class of stably finite algebras and $A$ has a square root, then $A$ is simple and nuclear (and is furthermore isomorphic to $\Q$ if $A$ is UCT).  Consequently, we have:

\begin{cor}
Assume that every nuclear \cstar-algebra is UCT.  Then the MF problem has a positive solution if and only if having a square root is an enforceable property of the compiled structure.
\end{cor}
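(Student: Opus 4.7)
The plan is to combine the preceding theorem (which characterizes the MF problem via enforceability of $\Q$) with the stably finite analogue of Lemma~\ref{squareroot} quoted just above. Throughout, $T$ denotes the universal theory of unital stably finite \cstar-algebras, and forcing is with respect to this $T$.

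For the forward direction, I would start from the MF problem having a positive solution. By the preceding theorem, $\Q$ is then enforceable with respect to $T$. Since $\Q$ is strongly self-absorbing, $\Q \cong \Q \otimes \Q$, so $\Q$ has a square root; consequently having a square root is an enforceable property of the compiled structure.

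For the reverse direction, suppose that having a square root is enforceable. We also know from the fact that $T$ is universally axiomatized that being an e.c.\ model of $T$ is enforceable. By the Conjunction Lemma (Lemma~\ref{properties}(4)), it is then enforceable that the compiled structure is both an e.c.\ stably finite \cstar-algebra and has a square root. The stably finite analogue of Lemma~\ref{squareroot} then forces the compiled algebra to be simple and nuclear. Invoking the blanket UCT hypothesis together with the refinement noted in the paragraph immediately preceding this corollary, we conclude that the compiled structure is isomorphic to $\Q$; hence $\Q$ is enforceable, and the preceding theorem yields that the MF problem has a positive solution.

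The only point requiring any care is ensuring that ``has a square root'' is a property of the $L$-structure that can be legitimately conjoined with the other enforceable properties via Lemma~\ref{properties}(4); but since this is simply a property of the isomorphism class of the compiled \cstar-algebra, this is automatic, and I do not anticipate any further obstacle.
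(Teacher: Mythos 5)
Your proposal is correct and is essentially the argument the paper intends (the corollary is stated as an immediate consequence of the preceding theorem and the stably finite version of Lemma \ref{squareroot}): the forward direction follows since enforceability of $\Q$ implies enforceability of any property $\Q$ has, and the reverse direction conjoins enforceability of being e.c.\ with having a square root, then uses the Robinson result plus the blanket UCT assumption to force the compiled algebra to be $\Q$, whence the theorem gives a positive solution to the MF problem. No gaps.
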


The previous discussion makes one wonder about the logical status of the UCT.  In particular, the following question comes to mind:

\begin{question}
Is the UCT an $L_{\omega_1,\omega}$ property of nuclear \cstar-algebras?
\end{question}

The next theorem spells out the precise difference between the QD problem and the MF problem:

\begin{thm}
The following are equivalent:
\begin{enumerate}
\item The MF problem has a positive solution.
\item The conjunction of the following two statements:
\begin{enumerate}
\item The QD problem has a positive solution.
\item Nuclearity is an enforceable property.
\end{enumerate}
\end{enumerate}
\end{thm}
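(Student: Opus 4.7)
The plan is to prove the two directions separately, using the characterization of the MF problem from the previous theorem (in particular, the equivalence of MF positive with $\Q^\u$-embeddability being enforceable and with $\Q$ being the enforceable structure) together with the Conjunction Lemma and the Choi--Effros identification of quasidiagonality with $\Q^\u$-embeddability for nuclear algebras.

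For (1) $\Rightarrow$ (2), I would argue each clause separately. For (2)(a), suppose MF has a positive solution. Then every stably finite nuclear \cstar-algebra embeds into $\Q^\u$, and by the Choi--Effros Lifting Theorem such an algebra is then quasidiagonal; hence QD has a positive solution. For (2)(b), by the previous theorem, MF positive implies that $\Q$ is the enforceable structure, and since $\Q$ is nuclear, the property ``the compiled structure is nuclear'' is a fortiori enforceable.

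For (2) $\Rightarrow$ (1), the idea is to enforce the compiled structure to be simultaneously nuclear and stably finite, then invoke QD to conclude $\Q^\u$-embeddability is enforceable. Concretely, by Proposition \ref{enforceuniversal} it is always enforceable that the compiled structure satisfies $T_\forall = T$, so in particular it is a unital stably finite \cstar-algebra; combining this with the hypothesis that nuclearity is enforceable via the Conjunction Lemma (Lemma \ref{properties}(4)), it is enforceable that the compiled structure is a unital, stably finite, nuclear \cstar-algebra. Under assumption (2)(a), every such algebra is quasidiagonal, hence $\Q^\u$-embeddable by the Choi--Effros Lifting Theorem. Thus $\Q^\u$-embeddability is enforceable, and by the previous theorem (the implication (5) $\Rightarrow$ (1)) this gives that MF has a positive solution.

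I do not foresee a substantive obstacle; the proof is essentially a repackaging of the previous theorem together with the observation that nuclearity being forceable combined with the automatic stable finiteness of the compiled model lets one apply QD on the forced model. The only point demanding a moment of care is the direction (2) $\Rightarrow$ (1), where one must note that we do not have to separately enforce that the compiled structure is e.c. (unlike in the proofs of Theorems \ref{CEP} and \ref{KEP}), since Theorem \ref{enforcelocallyuniversal} delivers local universality from the enforceability of $\Q^\u$-embeddability without any appeal to JEP—this is crucial given the unresolved JEP status noted in the preceding remark.
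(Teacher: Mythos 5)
Your proof is correct and follows essentially the same route as the paper: (1)$\Rightarrow$(2) via $\Q$ being enforceable (hence nuclearity enforceable) plus Choi--Effros for the QD clause, and (2)$\Rightarrow$(1) by enforcing nuclearity together with stable finiteness so that a positive QD answer makes $\Q^\u$-embeddability enforceable, then invoking the preceding theorem. The paper's proof is just a terser version of this; your added care in routing the conclusion through $\Q^\u$-embeddability and Theorem \ref{enforcelocallyuniversal} rather than through $\sigma_{\operatorname{QD}}^{T^f}$ correctly sidesteps the JEP caveat.
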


\begin{proof}
(1) implies (2)  since the MF problem having a positive solution implies that $\Q$ is enforceable.  For (2) implies (1), note that once we know that nuclearity is enforceable, then a positive solution to the QD problem implies that quasidiagonality is enforceable.
\end{proof}

\begin{rmk}
In \cite{Robinson}, it is conjectured that the only possible stably finite algebra that is both nuclear and e.c. for the class of stably finite algebras is $\Q$.  If this conjecture holds, then we see that the MF problem having a positive solution is simply equivalent to nuclearity being enforceable.
\end{rmk}

A problem related to the MF problem is whether or not every stably finite \cstar-algebra has a trace.  Of course, if the MF problem has a positive solution, then the aforementioned problem has a positive solution.  There is a connection with enforceability:

\begin{thm}
Every stably finite \cstar-algebra has a trace if and only if having a trace is an enforceable property of the compiled structure.
\end{thm}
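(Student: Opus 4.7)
The plan is to exploit Haagerup's classical characterization of the existence of a tracial state: a unital C*-algebra $A$ has a tracial state if and only if for every $n \in \n$ and every tuple $(x_1, \ldots, x_n) \in A^n$, one has $\|1 - \sum_{i=1}^n (x_i x_i^* - x_i^* x_i)\| \geq 1$. Restricting the quantifiers to the norm balls $\{x : \|x\| \leq R\}$ (for $R \in \n$), this becomes a countable family of universal first-order $L$-sentences
\[
\sigma_{n,R} \;:=\; \sup_{\|x_1\|,\ldots,\|x_n\| \leq R}\; \max\!\bigl(0,\; 1 - \|1 - \textstyle\sum_i (x_i x_i^* - x_i^* x_i)\|\bigr),
\]
so that $A$ has a trace if and only if $\sigma_{n,R}^A = 0$ for all $n, R$.

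The forward implication is essentially trivial: if every stably finite C*-algebra has a trace, then since it is enforceable that the compiled structure be a model of $T_\forall = T$, the compiled structure is automatically a stably finite C*-algebra and hence has a trace, so the property is vacuously enforceable.

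For the converse, suppose that having a trace is enforceable. Fix $n, R$. Since ``having a trace'' implies ``$\sigma_{n,R} = 0$'', any winning strategy for $\exists$ in $G(\text{has a trace})$ also witnesses that the closed condition $\sigma_{n,R} = 0$ is enforceable, so $\sigma_{n,R} = 0 \in T^f$. As $\sigma_{n,R}$ is universal, we have $\sigma_{n,R} = 0 \in (T^f)_\forall$; by Corollary \ref{mutual}, $(T^f)_\forall = T_\forall = T$, whence every stably finite unital C*-algebra satisfies $\sigma_{n,R} = 0$. Ranging over all $n, R$ and applying Haagerup's criterion, we conclude that every stably finite unital C*-algebra has a trace.

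The main step is invoking Haagerup's criterion, which converts the a priori semantic and infinitary property ``having a trace'' into a countable family of genuine universal $L$-sentences; once this translation is secured, the finite forcing companion machinery, specifically the identity $(T^f)_\forall = T_\forall$, does the rest without effort. No serious obstacle is anticipated beyond the appeal to Haagerup's theorem.
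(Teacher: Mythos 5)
Your proof is correct, but the converse direction takes a genuinely different route from the paper's. The paper argues semantically and game-theoretically: given a stably finite $A$, it treats each condition $p\subseteq \appdiag(A)$ as $\forall$'s opening move, lets $\exists$ follow the trace-enforcing strategy to realize $p$ in a tracial algebra, and then embeds $A$ into an ultraproduct of tracial stably finite algebras, concluding by restricting the trace; the only C*-facts used are that traciality passes to ultraproducts and to unital subalgebras. You instead convert ``has a tracial state'' into a countable family of universal sentences $\sigma_{n,R}=0$ (via the Cuntz--Pedersen characterization --- this is really Cuntz--Pedersen rather than Haagerup, though it is quoted in \cite{munster} exactly for this purpose, where the $\forall$-axiomatizability of having a trace is recorded), note that each $\sigma_{n,R}=0$ is enforceable and hence lies in $T^f$, and then apply Corollary \ref{mutual} to place it in $T_\forall$. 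What your route buys is a crisp general principle --- any enforceable property that is universally axiomatizable holds in every model of $T_\forall$ --- at the cost of importing a nontrivial C*-algebraic theorem; the paper's argument is softer and self-contained, and would apply even if one only knew closure of the tracial class under subalgebras and ultraproducts without an explicit axiomatization. Two small points: since JEP is unknown for stably finite algebras, it is worth noting (as you implicitly do) that you only use membership in $T^f$ and Corollary \ref{mutual}, neither of which needs JEP, and not completeness of $T^f$; and in the forward direction ``vacuously enforceable'' should really read that $\exists$ enforces being a model of $T$ (Proposition \ref{enforceuniversal}), whereupon the hypothesis supplies the trace --- likewise, in the converse, one should formally enforce the conjunction ``model of $T$ and has a trace'' before evaluating $\sigma_{n,R}$, which the Conjunction Lemma permits.
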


\begin{proof}
Suppose that we can enforce that the compiled structure has a trace.  Let $A$ be a stably finite \cstar-algebra.  It suffices to show, given any condition $p\subseteq \appdiag(A)$, that $p$ can be satisfied in a tracial stably finite algebra.  Indeed, by writing $\appdiag(A)$ as an increasing union of conditions, we can then satisfy $\appdiag(A)$ in an ultraproduct of tracial stably finite algebras, which is itself tracial.  It follows that $A$ can be embedded in a tracial algebra and is thus, itself, tracial.

Now given a condition $p$ from $\appdiag(A)$, view $p$ as $\forall$'s first move in the game and have $\exists$ follow its strategy to ensure that the compiled structure is tracial.  We then have that $p$ is realized in a tracial algebra, as desired.
\end{proof}

A related question is whether or not every \emph{quasitrace} on a stably finite \cstar-algebra is necessarily a trace.  It is known that every stably finite \cstar-algebra has a quasitrace, so a positive answer to the previous question implies that every stably finite \cstar-algebra has a trace.  

In \cite[Proposition 31]{Robinson}, it is shown that, in the language $L_\tau$ obtained by adding a unary function symbol $\tau$ to the above language $L$, the class of structures $(A,\tau)$, where $A$ is a \cstar-algebra and $\tau$ is a quasitrace on $A$, is universally axiomatizable, say by the universal $L_\tau$-theory $T_\tau$.  Moreover, it is easy to see that the class of such pairs where $\tau$ is actually a trace is also universally axiomatizable.  Arguing in the same way as in the preceding theorem, we see that:

\begin{thm}
Let forcing be with respect to $T_\tau$.  Then every quasitrace on a stably finite \cstar-algebra is a trace if and only if it is enforceable that the quasitrace on the compiled structure is a trace. 
\end{thm}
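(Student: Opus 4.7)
The plan is to mimic, almost verbatim, the proof of the immediately preceding theorem, with the universal $L_\tau$-property ``$\tau$ is a trace'' now playing the role that ``has a trace'' played there.  The two facts recalled just before the statement make this transcription work:  ``$\tau$ is a quasitrace'' (together with the axioms for unital stably finite \cstar-algebras) is the universal theory $T_\tau$, and ``$\tau$ is a trace'' is a strictly stronger universal $L_\tau$-property.

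For the forward implication, suppose every quasitrace on a stably finite \cstar-algebra is a trace.  By Proposition \ref{enforceuniversal}, it is enforceable that the compiled structure be a model of $(T_\tau)_\forall = T_\tau$, so the interpretation of $\tau$ on the compiled structure is automatically a quasitrace on a stably finite \cstar-algebra.  Under our hypothesis, it is then automatically a trace, and so the claim is enforceable for free.

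For the backward implication, fix $(A,\tau)\models T_\tau$ and expand $A$ to a canonical $L_\tau(C)$-structure $(A,\tau)^+$.  For any condition $p\subseteq \appdiag((A,\tau)^+)$, I view $p$ as player $\forall$'s opening move and have player $\exists$ follow a winning strategy which ensures the quasitrace on the compiled structure is a trace; the play then realizes $p$ in some pair $(B,\sigma)$ with $\sigma$ a trace.  Writing $\appdiag((A,\tau)^+)$ as an increasing union of such conditions and taking a compactness/ultraproduct of the resulting tracial pairs, I obtain a pair $(D,\nu)$ satisfying all of $\appdiag((A,\tau)^+)$ in which $\nu$ is still a trace (since ``is a trace'' is a universal $L_\tau$-property and so is preserved under ultraproducts).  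Then $(A,\tau)$ embeds into $(D,\nu)$ as $L_\tau$-structures, whence $\tau = \nu|_A$; since universal properties pass to substructures, $\tau$ is a trace on $A$.

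There is essentially no obstacle beyond bookkeeping:  everything hinges on the two universal properties in play being preserved in the expected directions, under ultraproducts and under substructures.  The only conceptual point worth emphasizing is that the game-theoretic strategy is used only to produce a trace-valued witness $(B,\sigma)$ for each finite piece of $\appdiag((A,\tau)^+)$; the passage from those local witnesses to the conclusion that $\tau$ itself is a trace on $A$ is then purely first-order and does not use the game any further.
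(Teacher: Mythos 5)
Your proposal is correct and takes essentially the same route as the paper, which proves this theorem simply by ``arguing in the same way as in the preceding theorem'': the easy direction because modelling the universal theory $T_\tau$ is enforceable, and the substantive direction by viewing each condition from $\appdiag((A,\tau))$ as $\forall$'s opening move, compiling a pair whose quasitrace is a trace, passing to an ultraproduct, and restricting the trace to the embedded copy of $(A,\tau)$.
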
 

Haagerup \cite{haag} showed that quasitraces on exact \cstar-algebras are traces, so if one can enforce (with respect to $T_\tau$) that the compiled structure is exact, then every quasitrace on a stably finite \cstar-algebra is a trace.

We end this subsection by mentioning the case of stably finite, \emph{projectionless} algebras.
\begin{thm}
Let $T_{\operatorname{sfp}}$ be the universal $L$-theory for unital, projectionless, stably finite \cstar-algebras and let forcing be with respect to $T_{\operatorname{sfp}}$.  Then the following are equivalent:
\begin{enumerate}
\item Every unital, projectionless, stably finite algebra is $\mathcal{Z}^\u$-embeddable.
\item $\mathcal{Z}$ is enforceable.
\item $\mathcal{Z}^\u$-embeddability is enforceable.
\end{enumerate}
\end{thm}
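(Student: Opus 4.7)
The plan is to establish the cycle $(2)\Rightarrow(3)\Rightarrow(1)\Rightarrow(2)$, in direct analogy with the proofs of Theorem \ref{CEP} and Proposition \ref{KEP}.

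The implications $(2)\Rightarrow(3)$ and $(3)\Rightarrow(1)$ require almost no work.  The first is immediate, since forcing the compiled structure to be isomorphic to $\mathcal{Z}$ forces it in particular to embed into $\mathcal{Z}^\u$.  The second is a direct application of Theorem \ref{enforcelocallyuniversal} with $B=\mathcal{Z}$: enforceability of $\mathcal{Z}^\u$-embeddability immediately yields that $\mathcal{Z}$ is a locally universal model of $T_{\operatorname{sfp}}$, which is precisely statement $(1)$.

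The substantive step is $(1)\Rightarrow(2)$, and the key observation is that, under $(1)$, the two universal theories $T_{\operatorname{sfp}}$ and $\Th_\forall(\mathcal{Z})$ axiomatize the same class of algebras.  In one direction, any $\mathcal{Z}^\u$-embeddable algebra sits inside $\mathcal{Z}^\u$, which inherits unitality, projectionlessness, and stable finiteness from $\mathcal{Z}$; as each of these is a universal property, they pass from $\mathcal{Z}$ to $\mathcal{Z}^\u$ via \L o\'s and then to any subalgebra, so every model of $\Th_\forall(\mathcal{Z})$ is a model of $T_{\operatorname{sfp}}$.  In the other direction, $(1)$ says every model of $T_{\operatorname{sfp}}$ embeds into $\mathcal{Z}^\u$, hence satisfies $\Th_\forall(\mathcal{Z})$.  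Since whether a finite set of expressions qualifies as a condition depends only on the model class of the base theory (the requirement being that the base theory together with the set be satisfiable in some structure), the forcing games over $T_{\operatorname{sfp}}$ and over $\Th_\forall(\mathcal{Z})$ become literally the same game, and enforce exactly the same properties.  Because $\mathcal{Z}$ is strongly self-absorbing, the earlier theorem that ssa algebras are the enforceable models of their universal theories gives that $\mathcal{Z}$ is enforceable with respect to $\Th_\forall(\mathcal{Z})$, and therefore also with respect to $T_{\operatorname{sfp}}$.

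The only real content lies in the bookkeeping in $(1)\Rightarrow(2)$: verifying that unitality, projectionlessness, and stable finiteness are each universal (so they transfer to arbitrary subalgebras of $\mathcal{Z}^\u$), and recording the observation that two universal theories with the same model class give rise to identical forcing games.  These points are routine, and I do not anticipate a genuine obstacle, since all the heavy lifting is done by Theorem \ref{enforcelocallyuniversal} and the previously established enforceability of ssa algebras relative to their universal theories.
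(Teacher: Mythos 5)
Your proposal is correct and follows essentially the route the paper intends for this theorem (which it states without an explicit proof, as a direct analogue of the CEP, KEP, and MF-problem equivalences): $(3)\Rightarrow(1)$ is exactly Theorem \ref{enforcelocallyuniversal} applied with $B=\mathcal{Z}$, and $(1)\Rightarrow(2)$ is the observation that under $(1)$ the universal theories $T_{\operatorname{sfp}}$ and $\Th_\forall(\mathcal{Z})$ have the same models, hence the same conditions and the same forcing, so that the theorem on strongly self-absorbing algebras being the enforceable models of their universal theories applies to $\mathcal{Z}$. Your bookkeeping (universality of unitality, projectionlessness, and stable finiteness, and the fact that enforceability depends only on the model class of the base theory) is exactly the routine verification needed.
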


%

%
%

\subsection{Operator spaces and systems}

In this section, we let $L$ denote the language of operator spaces and $T$ the universal $L$-theory for operator spaces.  (See \cite[Appendix B]{KEP}.)  Let $\NG$ denote the so-called \emph{noncommutative Gurarij space}, which is the Fraisse limit of the finite-dimensional $1$-exact operator spaces.  (See \cite{lupini} for other equivalent descriptions of $\NG$.)  It is readily checked that the proof that nuclearity is a $\sup\bigvee\inf$-property of \cstar-algebras also establishes the same fact for operator spaces.  Since every operator space embeds into an ultrapower of $\NG$, it follows that we can enforce that the compiled operator space be nuclear.

In \cite[Section 5.6]{lupini}, building on ideas from \cite{GL}, it was shown that $\NG$ is the unique e.c.\ operator space that is also $1$-exact (in particular nuclear).  Since we can also enforce that the compiled operator space be e.c., we have:

\begin{prop}
$\NG$ is the enforceable model of $T$.
\end{prop}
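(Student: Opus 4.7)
The plan is to combine the two enforceability facts just established in the paragraph preceding the statement and then appeal to the uniqueness result of Lupini to identify the compiled structure as $\NG$.

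First, I would invoke Proposition \ref{enforcesbvi} with $P$ taken to be nuclearity: this is a $\sup\bigvee\inf$-property of operator spaces, and $\NG$ is a model of $T$ with property $P$ which is locally universal (every operator space embeds into an ultrapower of $\NG$). Hence nuclearity is enforceable. Second, I would invoke the (already established) fact that being an e.c.\ model of $T_\forall$ is enforceable; since $T$ is itself universal, this just says that being an e.c.\ operator space is enforceable.

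By the Conjunction Lemma (Lemma \ref{properties}(4)), the property ``the compiled operator space is nuclear and e.c.'' is enforceable. Nuclearity implies $1$-exactness, so the compiled structure is enforceably a separable, $1$-exact, e.c.\ operator space. At this point I would quote the characterization from \cite[Section 5.6]{lupini} (whose proof builds on \cite{GL}): up to complete isometry, $\NG$ is the unique separable e.c.\ operator space that is $1$-exact. Combining these, being isomorphic to $\NG$ is enforceable, which is the desired conclusion.

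There is essentially no obstacle here beyond quoting the machinery: the real content has already been packaged into Proposition \ref{enforcesbvi}, the enforceability of e.c.-ness, the Conjunction Lemma, and the uniqueness theorem of Lupini. If one wanted an alternative route, one could instead apply Theorem \ref{primeimpliesenforceable}: since $\NG$ is the Fraïssé limit of the finite-dimensional $1$-exact operator spaces, it is the prime model of its theory, and the argument above (e.c.\ plus $1$-exact) shows it is finite-generic with respect to $\Th_\forall(\NG) = T$; hence $\NG$ is enforceable. Either route delivers the proposition, but the direct conjunction argument is the shortest and most in keeping with the style of the surrounding paragraph.
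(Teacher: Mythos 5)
Your argument is exactly the paper's: enforce nuclearity via Proposition \ref{enforcesbvi} using that $\NG$ is nuclear and locally universal, enforce e.c.-ness, combine with the Conjunction Lemma, and identify the compiled structure with $\NG$ by Lupini's uniqueness of the $1$-exact (in particular nuclear) e.c.\ operator space. This is correct and essentially verbatim the paper's proof, so nothing further is needed.
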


If we instead work in the operator system category, the analog of $\NG$ is the \emph{Gurarij operator system} $\GS$, whose model-theoretic properties were laid out in \cite{GL}.  The exact same arguments show that $\GS$ is the enforceable model of the theory of operator systems.
  

\section{The dichotomy theorem}

\subsection{The dichotomy theorem and embedding problems revisited}

The goal of this chapter is to prove the following theorem, which is the continuous analog of \cite[Theorem 4.2.6]{hodges}:

\begin{thm}
Suppose that $T$ is an $\forall\exists$-axiomatizable theory JEP.  Then exactly one of the following happens:
\begin{enumerate}
\item For every enforceable property $P$, there are continuum many nonisomorphic models of $T$ with property $P$.
\item $T$ has an enforceable model.
\end{enumerate}
\end{thm}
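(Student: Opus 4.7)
My plan is to prove the theorem by contradiction, modeled on the classical dichotomy \cite[Theorem 4.2.6]{hodges}. The mutual exclusivity of (1) and (2) is immediate: if some separable $A$ is enforceable, then taking $P$ to be ``the compiled structure is isomorphic to $A$'' yields only one model up to isomorphism, contradicting (1). So assume (2) fails and fix an arbitrary enforceable property $P$; I will produce continuum many pairwise nonisomorphic separable models of $T$ satisfying $P$. Let $P^\ast$ be the conjunction of $P$ with ``the compiled structure is extra canonical,'' ``the compiled structure is an e.c.\ model of $T_\forall$,'' and ``the compiled structure is finite-generic,'' each of which is enforceable by the earlier results (in particular Proposition \ref{enforceuniversal} and Proposition \ref{enforcegeneric}); so $P^\ast$ is enforceable by the Conjunction Lemma. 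Since $T$ is $\forall\exists$-axiomatizable, any e.c.\ model of $T_\forall$ is a model of $T$, so every $P^\ast$-compiled structure is a separable model of $T$ with $P$. Fix a winning strategy $\tau^\ast$ for $\exists$ in $G(P^\ast)$.

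I will build a tree $(p_s)_{s \in 2^{<\omega}}$ of conditions with $p_s \subseteq p_{s \frown i}$, where each $p_s$ is the position reached in some partial play in which $\exists$ responds according to $\tau^\ast$. Fix an enumeration $(f_n)_{n \in \omega}$ of all finite partial injections between tuples of constants from $C$, with each appearing cofinally. At stage $|s|=n$, the construction chooses two $\forall$-moves extending $p_s$ so that, after $\exists$ replies by $\tau^\ast$, the resulting positions $p_{s\frown 0}$ and $p_{s\frown 1}$ are ``incompatible over $f_n$'' in the sense of the Splitting Lemma below. For any branch $\sigma \in 2^\omega$, the resulting play (with $\forall$'s moves determined by the tree and $\exists$'s moves by $\tau^\ast$) produces a separable compiled structure $A_\sigma$ with property $P$. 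Since $A_\sigma^+$ is extra canonical, any isomorphism $A_\sigma \cong A_\tau$ would be approximated arbitrarily well by finite partial bijections of constant interpretations; cofinal appearance of every $f_n$ in the enumeration, together with the incompatibilities at each level, rule out any such approximating bijection, so $A_\sigma \not\cong A_\tau$ for $\sigma \neq \tau$, and (1) follows.

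The main obstacle is the \emph{Splitting Lemma}: given $p$ reachable under $\tau^\ast$ and a finite partial injection $f$ of constants, there exist $\tau^\ast$-reachable extensions $p^0, p^1 \supseteq p$, a restricted quantifier-free $L(C)$-formula $\psi$, and rationals $a < b$ such that $p^0 \Vdash^g \psi < a$, while for every extension $\tilde f$ of $f$ to the constants occurring in $\psi$, $p^1 \Vdash^g \psi^{\tilde f} \geq b$ (where $\psi^{\tilde f}$ is the renaming of $\psi$ by $\tilde f$). To prove this I would argue by contradiction: if splitting failed at $(p, f)$, then for every pair $p', p'' \supseteq p$ reachable under $\tau^\ast$, every restricted quantifier-free $\psi$, and every $\epsilon > 0$, there is an extension $\tilde f$ of $f$ so that $F_{p'}^g(\psi)$ and $F_{p''}^g(\psi^{\tilde f})$ agree to within $\epsilon$; combining this coherence with Proposition \ref{chargen} (truth in a finite-generic structure coincides with forcing), Lemma \ref{twostep} (the sup-inf reformulation), and the interval-refinement proposition preceding it, the isomorphism type of every $P^\ast$-compiled structure extending $p$ is pinned down modulo an extension of $f$; by the Homogeneity Lemma \ref{homog}, $p$'s constants can be moved away, yielding an enforceable separable model and contradicting the failure of (2). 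The technical difficulty unfamiliar from the discrete setting is that continuous forcing is quantitative: ``incompatibility over $f$'' must be a bound separation uniform over all approximate renamings of fresh constants, so the coherence-to-uniqueness passage must be carried out with careful interval bookkeeping using Lemma \ref{twostep}, replacing the crisp yes/no bifurcations of \cite{hodges}.
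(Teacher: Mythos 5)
There is a genuine gap, and it sits exactly where you put all the weight: the Splitting Lemma. First, as stated it is false in general. If $f$ is empty (or involves only fresh constants), then ``$p^1\Vdash^g \psi^{\tilde f}\geq b$ for \emph{every} extension $\tilde f$'' forces, in the compiled structure (which every condition forces to be canonical), that $\psi\geq b$ on a dense set of tuples and hence that $\inf_x\psi(x)\geq b$; meanwhile $p^0$ forces $\inf_x\psi(x)<a<b$. Since $T$ has JEP, the completeness of $T^f$ shows every condition forces $\inf_x\psi(x)$ to take one fixed value, so no such pair $p^0,p^1$ can exist. More importantly, even for nonempty $f$ your proof-by-contradiction cannot work: failure of splitting at a \emph{single} pair $(p,f)$ is a local statement and does not yield an enforceable model. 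For instance, $p$ may simply confine the existential types of the relevant tuples to a small neighborhood of an isolated point of $S_n^\exists(T)$, in which case no separation of the required kind is available, yet $T$ may still have no enforceable model. Producing an enforceable model genuinely requires a global hypothesis --- density of the isolated existential types in $S_n^\exists(T)$ for all $n$ --- together with an existence-and-uniqueness argument for the resulting $e$-atomic structure (the back-and-forth of Proposition \ref{uniqueatomic} and the enforcement in Lemma \ref{enforceatomic}); your ``coherence-to-uniqueness passage'' is precisely this machinery, and it is not supplied by homogeneity plus interval bookkeeping below one condition.

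The second gap is the non-isomorphism bookkeeping across branches. For a fixed pair $\sigma\neq\tau$, the two branches interact only at their unique splitting node, so your construction imposes exactly one incompatibility, over the single partial injection $f_{|s|}$, on that pair; the cofinal recurrence of each $f_n$ only affects \emph{other} pairs' splitting nodes. But an isomorphism $A_\sigma\cong A_\tau$ need not (even approximately) extend any prescribed finite correspondence of constants, and ruling out all isomorphisms between a fixed pair requires infinitely many requirements for that pair, which a tree construction cannot deliver. The paper's proof avoids this by using an isomorphism \emph{invariant} rather than a diagonalization: if the isolated types are not dense, one fixes $[\varphi<\epsilon]$ containing no isolated type, forces the type of a distinguished tuple into it on every board, and uses the two-board result (Proposition \ref{simultaneous}, propagated through the splitting game by Proposition \ref{splitting}) that any maximal existential type realized on two boards must be isolated; a single realized non-isolated type then separates all the branches simultaneously. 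Your outline has no analog of restricting to a region of type space without isolated types, and without it the splitting step can genuinely fail. So the plan needs to be reorganized around the dichotomy ``isolated existential types dense versus some $[\varphi<\epsilon]$ without them,'' which is the content of Corollary \ref{denseenforceable} and the topometric analysis of $S_n^\exists(T)$.
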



The remaining subsections will be devoted to the proof of the dichotomy theorem.  However, before we turn to the proof, let us mention how this theorem suggests a new strategy for providing a positive solution to the embedding problems from the previous section.  Let us first consider CEP.

\

\noindent \textbf{Step 1:}  Find an enforceable property $P$ of II$_1$ factors shared by fewer than continuum many nonisomorphic separable II$_1$ factors.

\

By the Dichotomy Theorem and Step 1, there is an enforceable II$_1$ factor $\mathcal E$.  

\

\noindent \textbf{Step 2:}  Show that the enforceable II$_1$ factor $\mathcal E$ must be isomorphic to $\R$.

\

Clearly one (or both!) of these steps must be difficult, but it is not clear to us which step that is.  That being said, as mentioned in Remark \ref{canonicalfactor}, since being an enforceable II$_1$ factor is such a canonical property, it is hard to envision one existing without it being isomorphic to arguably the most canonical II$_1$ factor $\R$.

\begin{rmk}
In trying to establish Step 1, one should \emph{not} try to show that there is a first-order property $P$ that has fewer than continuum many nonisomorphic separable models.  Indeed, as shown in \cite{FHS3}, given any II$_1$ factor $M$, there are continuum many nonisomorphic separable II$_1$ factors elementarily equivalent to $M$.
\end{rmk}

The above strategy can be stated in an analogous fashion for the KEP.  In connection with Step 2 for the KEP, the following remark seems in order.
  
\begin{rmk}
Suppose that $\mathcal{E}$ is the enforceable \cstar-algebra.  Then $\mathcal{E}$ is finite-generic, whence every embedding $\mathcal{E}\hookrightarrow \mathcal{E}^\u$ is elementary.  Thus, assuming the Continuum Hypothesis (CH), any two embeddings of $\mathcal{E}$ into $\mathcal{E}^\u$ are conjugate by an automorphism of $\mathcal{E}^\u$.  \emph{If} one can show that these automorphism are implemented by unitaries \emph{and} that $\mathcal{E}\equiv \mathcal{E}\otimes \mathcal{E}$, then, by \cite[Theorem 2.14]{ssa}, it follows that $\mathcal{E}$ is ssa and hence $\mathcal{E}\cong \O_2$.  Since the question of whether or not $\mathcal{E}$ and $\O_2$ are isomorphic is absolute (see \cite{farah}), the assumption of CH is harmless here.  
\end{rmk}

The case of the MF problem is different in that, as mentioned in the last section, it is currently unknown whether or not the class of stably finite \cstar-algebras has JEP.  \emph{If we assume} that the class of stably finite \cstar-algebras has JEP and the above strategy then worked, we would conclude that the MF problem has a positive solution.  Of course, if the MF problem has a positive solution, then every stably finite \cstar-algebras has a trace, which itself implies that the minimal tensor product of two stably finite \cstar-algebras is stably finite \cite[Theorem 2.4]{haag}, so the above strategy in the stably finite case would amount to a strategy for solving the following (possibly outlandish):

\begin{conj}
The following are equivalent:
\begin{enumerate}
\item The class of stably finite \cstar-algebas has JEP.
\item The MF problem has a positive solution.
\item Every stably finite \cstar-algebra has a trace.
\end{enumerate}
\end{conj}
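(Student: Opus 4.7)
The plan is to dispatch the routine implications $(2)\Rightarrow(3)$ and $(3)\Rightarrow(1)$ directly, and then attack the essential direction $(1)\Rightarrow(2)$ by the dichotomy theorem and the two-step program described earlier in this section.

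For $(2)\Rightarrow(3)$: the universal UHF algebra $\Q$ carries a faithful trace $\tau_\Q$, which induces the limit trace on $\Q^\u$; any $\Q^\u$-embedding of a stably finite $A$ pulls this trace back to a trace on $A$. For $(3)\Rightarrow(1)$: given stably finite unital $A,B$, assumption $(3)$ says both carry traces, so Haagerup's result \cite[Theorem 2.4]{haag} applies to guarantee that $A\otimes_{\min} B$ is itself stably finite; this furnishes a common stably finite superalgebra, hence JEP.

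The substantive direction is $(1)\Rightarrow(2)$. Assuming $(1)$, the universal $L$-theory $T$ of unital stably finite $C^*$-algebras is $\forall\exists$-axiomatizable and has JEP, so the dichotomy theorem of this section applies. I would execute the two-step program outlined before. Step 1: exhibit an enforceable property $P$ of $T$ that is shared by fewer than $2^{\aleph_0}$ separable models, thereby ruling out the ``continuum of models'' alternative and producing an enforceable structure $\E\models T$. Step 2: identify $\E$ with $\Q$. The natural candidate for Step 1 is to enforce simplicity and nuclearity (both are $\sup\bigvee\inf$-properties, hence enforceable by Proposition \ref{enforcesbvi} as soon as a simple, nuclear, locally universal stably finite $C^*$-algebra is produced from $(1)$), which together with enforceable e.c.-ness confines $\E$ to a very small isomorphism class. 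For Step 2, assuming UCT (or settling the UCT question raised earlier), the TWW theorem forces $\E$ to be $\Q^\u$-embeddable, and a back-and-forth absorption in the spirit of the ssa proofs of the previous section then promotes this to $\E\cong\Q$, giving $(2)$.

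The main obstacle is Step 1: even granting JEP, producing an enforceable property with fewer than continuum many nonisomorphic separable realizations is not a formal consequence of the machinery developed in this paper. This step is tightly linked to Robinson's conjecture that the only stably-finite-e.c. nuclear algebra is $\Q$; if that conjecture holds, one needs only enforce nuclearity (plus arrange a nuclear locally universal stably finite algebra under $(1)$), and Step 2 follows from TWW under UCT. A realistic attempt at the conjecture would therefore proceed by first securing Robinson's conjecture and the nuclear UCT problem, then using the dichotomy framework as the glue between $(1)$ and $(2)$; absent such additional ingredients, the implication $(1)\Rightarrow(2)$ appears to be genuinely out of reach with present methods, which is why the statement is posed as a conjecture rather than a theorem.
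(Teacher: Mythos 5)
This statement is posed in the paper as an open conjecture, and your treatment matches the paper's own discussion: the implications $(2)\Rightarrow(3)$ and $(3)\Rightarrow(1)$ are established exactly as the paper does (pulling back the trace through a $\Q^\u$-embedding, then invoking Haagerup's result that minimal tensor products of tracial stably finite algebras are stably finite to get JEP), while $(1)\Rightarrow(2)$ is precisely the open direction, for which the paper offers only the same dichotomy-theorem strategy you outline. You have correctly identified that Step 1 (an enforceable property with fewer than continuum many separable realizations) is the genuine obstruction and not a consequence of the machinery developed here, which is exactly why the statement is a conjecture rather than a theorem.
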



\subsection{The topometric space $S_n^\exists(T)$}

In this subsection, we let $T$ be an $\forall\exists$-axiomatizable $L$-theory with JEP.  For $A\models T$ and a tuple $a$ from $A$, set 
$$\etp^A(a)=\{\varphi(x)=0 \ : \ \varphi \text{ existential and }\varphi^A(a)=0\}.$$  We call $\etp^A(a)$ the \emph{existential type of $a$ in $A$}.  For $n\geq 1$, an \emph{existential $n$-type} is the existential type of an $n$-tuple from a model of $T$.

The following lemma will prove useful a number of times.

\begin{lemma}\label{JEP}
Suppose that $A,A'\models T$ and $a$ and $a'$ are tuple from $A$ and $A'$ respectively of the same length such that $\etp^{A'}(a')\subseteq \etp^A(a)$.  Then there is $A''\models T$ and embeddings $i:A\to A''$ and $j:A'\to A''$ such that $i(a)=j(a')$.
\end{lemma}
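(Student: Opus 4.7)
The plan is a standard diagram-plus-compactness argument, where the hypothesis is used to splice finite pieces of the two diagrams together inside $A$ itself. Concretely, I would take disjoint sets of new constant symbols $C$ naming the elements of $A$ and $C'$ naming the elements of $A'$, except that the constants for $a$ and those for $a'$ are taken to be identical. I would then aim to show that $T\cup \Diag(A)\cup \Diag(A')$ is satisfiable in the resulting language. By the standard interpretation of diagrams recalled earlier in the excerpt, any model of this theory furnishes $A''\models T$ together with embeddings $i\colon A\to A''$ and $j\colon A'\to A''$ with $i(a)=j(a')$, which is exactly what is required.

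By compactness it suffices to satisfy an arbitrary finite fragment. Working instead with the approximate diagrams, such a fragment can be bundled into two single expressions $\psi(a,b)<\epsilon$ coming from $\appdiag(A)$ and $\chi(a,b')<\epsilon'$ coming from $\appdiag(A')$, where $\psi,\chi$ are quantifier-free, $\psi^A(a,b)<\epsilon$, $\chi^{A'}(a',b')<\epsilon'$, and $b,b'$ are disjoint auxiliary tuples. I then need a single model of $T$ containing a tuple that simultaneously witnesses both inequalities (with the $a$-slot shared between them).

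The key move is to repackage the $A'$-side as an existential assertion about the shared tuple. Choose $\delta>0$ with $\chi^{A'}(a',b')<\epsilon'-\delta$ and set
$$\theta(x):=\inf_{y'}\bigl(\chi(x,y')\dotminus(\epsilon'-\delta)\bigr).$$
Then $\theta$ is an existential $L$-formula with $\theta^{A'}(a')=0$, so $(\theta(x)=0)\in \etp^{A'}(a')\subseteq \etp^A(a)$, hence $\theta^A(a)=0$. Unpacking, there is $b''\in A$ with $\chi^A(a,b'')<\epsilon'$. Combined with $\psi^A(a,b)<\epsilon$, which already holds in $A$, we conclude that $A$ itself, with $b''$ interpreting $b'$, models the finite fragment; compactness finishes the job.

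The entire content of the lemma is concentrated in this last step, and I expect no serious obstacle: everything else is formal. The only places that demand a bit of care are the passage from the finite fragment to the single quantifier-free expressions $\psi,\chi$ (absorbing the various strict inequalities into a single $\epsilon,\epsilon'$) and the choice of $\delta$ small enough that the slack in the definition of $\theta$ does not push $\chi^A(a,b'')$ past $\epsilon'$; both are routine.
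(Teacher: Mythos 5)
Your proposal is correct and follows essentially the same route as the paper: reduce to satisfiability of $T\cup\Diag(A)\cup\Diag(A')$ (with the $a$- and $a'$-constants glued), pass to a finite fragment by compactness, and use $\etp^{A'}(a')\subseteq\etp^A(a)$ to find witnesses inside $A$ itself for the $A'$-part. The only cosmetic differences are that the paper relaxes closed diagram conditions $\psi=0$ to $\psi\leq\epsilon$ rather than working with $\appdiag$ and the $\dotminus$-trick, and keeps the constants distinct while adding $d(c,d)=0$.
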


\begin{proof}
Let $C$ and $D$ denote two disjoint countably infinite sets of new constant symbols and expand $A$ and $A'$ to canonical $L(C)$- and $L(D)$-structures $A^+$ and $(A')^+$ respectively.  Without loss of generality, $a$ and $a'$ are named by tuples of constants, say $c$ and $d$.  It is enough to show that 
$$T\cup \Diag(A)\cup \Diag(A')\cup \{d(c,d)=0\}$$ is satisfiable.  Fix $\varphi(c,c_1)=0$ from $\Diag(A)$ and $\psi(d,d_1)=0$ from $\Diag(A')$, with $c$ and $d$ disjoint tuples of constants and likewise for $c'$ and $d'$.  Also fix $\epsilon>0$.  By compactness, it is enough to show that $$T\cup\{\varphi(c,c_1)=0,\psi(d,d_1)\leq \epsilon,d(c,d)=0\}$$ is satisfiable.  Since $\inf_y\psi(x,y)=0$ belongs to $\etp^{A'}(a')\subseteq \etp^A(a)$, there is $e\in A$ such that $\psi(a,e)\leq \epsilon$.  Expand $A$ to an $L(C\cup D)$-structure $A^{++}$ by further expanding $A^+$ to interpret $d$ as $a$ and $d_1$ as $e$ and the other constants by anything.  It follows that $A^{++}$ satisfies the last displayed set of conditions.
\end{proof}

\begin{df}
An existential type is \emph{maximal} if it is not properly contained in any other existential type.  For $n\geq 1$, we set $$S_n^\exists(T):=\{\etp^A(a) \ : \ \etp^A(a) \text{ is a maximal $n$-type}\}.$$
\end{df}

We will use letters like $\pi$ and $\rho$ to denote elements of $S_n^\exists(T)$.

\begin{lemma}
Elements of $S_n^\exists(T)$ are precisely the existential $n$-types $\etp^A(a)$ where $A\models T$ is e.c.
\end{lemma}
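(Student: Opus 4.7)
The plan is to prove the two inclusions separately, with the key tool in both being the fact that existential formulas are weakly decreasing under embeddings (if $i : A \hookrightarrow A''$ is an embedding and $\varphi$ is existential, then $\varphi^{A''}(i(a)) \leq \varphi^A(a)$), combined with the defining property of e.c.\ models (equality holds when $A$ is e.c.\ in $A''$). I will also invoke the fact that $T$ being $\forall\exists$-axiomatizable gives us e.c.\ extensions on demand.

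For the direction that existential types of e.c.\ models are maximal, I will argue by contradiction. Suppose $A \models T$ is e.c., $a \in A^n$, and $\etp^A(a) \subsetneq \etp^{A'}(a')$ for some $A' \models T$ and $a' \in (A')^n$. Then the hypothesis of Lemma \ref{JEP} holds with the roles of $(A,a)$ and $(A',a')$ reversed (since the smaller type now sits on the appropriate side), yielding $A'' \models T$ with embeddings $i : A' \to A''$ and $j : A \to A''$ satisfying $i(a') = j(a)$. Choose an existential formula $\varphi$ witnessing strict inclusion: $\varphi^{A'}(a') = 0$ while $\varphi^A(a) > 0$. Then $\varphi^{A''}(j(a)) = \varphi^{A''}(i(a')) \leq \varphi^{A'}(a') = 0$, and since $A$ is e.c.\ in $A''$, this forces $\varphi^A(a) = 0$, a contradiction.

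For the converse, let $\pi \in S_n^\exists(T)$ be maximal, realized by some tuple $b$ in a model $B \models T$. Using that $T$ is $\forall\exists$-axiomatizable, I will embed $B$ into an e.c.\ model $A \models T$ via some embedding $i$. Since existential formulas decrease under embeddings, $\etp^B(b) \subseteq \etp^A(i(b))$, and by the direction already proved, $\etp^A(i(b))$ is itself maximal; maximality of $\pi = \etp^B(b)$ now forces equality, so $\pi$ is realized in the e.c.\ model $A$.

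I do not expect any serious obstacle here; the argument is a clean application of Lemma \ref{JEP} together with the semantic characterization of e.c.\ structures. The only point that requires a moment of care is swapping the roles in Lemma \ref{JEP} correctly, since the hypothesis there is asymmetric in $A$ and $A'$ (the \emph{smaller} type must sit on the $A'$ side, so when we want to compare $\etp^A(a) \subseteq \etp^{A'}(a')$ we must apply the lemma to the pair $(A',a'),(A,a)$).
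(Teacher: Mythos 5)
Your proof is correct and follows essentially the same route as the paper: the converse direction embeds a realization of the maximal type into an e.c.\ model (available since $T$ is $\forall\exists$-axiomatizable) and invokes maximality, while the forward direction amalgamates via Lemma \ref{JEP} and uses that existential formulas do not increase under embeddings together with the e.c.\ property of $A$. Your remark about swapping the roles of $(A,a)$ and $(A',a')$ in Lemma \ref{JEP} is a point the paper glosses over (harmlessly, since the conclusion is symmetric), and your appeal to the already-proved direction in the converse is unnecessary but not wrong.
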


\begin{proof}
First suppose that $\pi\in S_n^\exists(T)$.  Write $\pi=\etp^A(a)$ for some $A\models T$ and $a\in A$.  Let $B\supseteq A$ be an e.c. model of $T$.  Then $\pi\subseteq \etp^B(a)$; by maximality, $\pi=\etp^B(a)$.

Conversely, suppose that $\pi=\etp^A(a)$ for $A\models T$ e.c.  Suppose that $\pi\subseteq \etp^{A'}(a')$ for some $A'\models T$.  By Lemma \ref{JEP}, there is $A''\models T$ and $i:A\to A''$, $j:A'\to A''$ such that $i(a)=j(a')$.  Now suppose that $\varphi(x)=0$ belongs to $\etp^{A'}(a')$.  Then $\varphi(x)=0$ belongs to $\etp^{A''}(j(a'))=\etp^{A''}(i(a))$.  Since $A$ is e.c., it follows that $\varphi(x)=0$ belongs to $\pi$.
\end{proof}

\begin{df}
Given an existential formula $\varphi(x)$, with $x$ an $n$-tuple of variables, and $\epsilon>0$, let $[\varphi<\epsilon]$ denote the set of elements $\pi\in S_n^\exists(T)$ such that, writing $\pi=\etp^A(a)$ for $A\models T$ e.c., then $\varphi^A(a)<\epsilon$.
The \emph{logic topology} on $S_n^\exists(T)$ has, as basic open neighborhoods of $\pi(x)$, sets of the form $[\varphi<\epsilon]$, where $\varphi=0$ belongs to $\pi(x)$ and $\epsilon>0$.
\end{df}

\begin{lemma}
The logic topology on $S_n^\exists(T)$ is Hausdorff.
\end{lemma}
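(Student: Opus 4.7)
The plan is to leverage maximality of the existential types together with compactness of continuous logic. Let $\pi\neq\rho$ in $S_n^\exists(T)$ and write $\pi=\etp^A(a)$, $\rho=\etp^B(b)$ with $A,B\models T$ existentially closed.

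First I would argue that the set of conditions $T\cup\pi\cup\rho$ (viewing the free variables $x$ as a tuple of new constants) is unsatisfiable. Suppose for contradiction some $C\models T$ and $c\in C^n$ realize every condition in $\pi\cup\rho$. Embed $C$ into an e.c.\ extension $C'\supseteq C$ of $T$; since existential formulas $\varphi(x)=\inf_y\theta(x,y)$ can only decrease under extension, the tuple $c$ continues to realize $\pi\cup\rho$ in $C'$. Thus $\pi\cup\rho\subseteq\etp^{C'}(c)$, and by the previous lemma $\etp^{C'}(c)\in S_n^\exists(T)$. Maximality of both $\pi$ and $\rho$ then forces $\pi=\etp^{C'}(c)=\rho$, a contradiction.

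Next I would apply the continuous-logic compactness theorem to the unsatisfiable set $T\cup\pi\cup\rho$: this furnishes existential formulas $\varphi_1,\ldots,\varphi_k$ with each $\varphi_i=0\in\pi$, existential formulas $\psi_1,\ldots,\psi_\ell$ with each $\psi_j=0\in\rho$, and some $\epsilon>0$, such that no $C\models T$ admits a tuple $c$ with $\varphi_i^C(c)\le\epsilon$ and $\psi_j^C(c)\le\epsilon$ for all $i,j$. Setting $\varphi:=\max_i\varphi_i$ and $\psi:=\max_j\psi_j$, the identity $\max(\inf_{y_1}\theta_1,\inf_{y_2}\theta_2)=\inf_{y_1,y_2}\max(\theta_1,\theta_2)$ keeps us inside the existential fragment, and $\varphi^A(a)=\psi^B(b)=0$. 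Hence $[\varphi<\epsilon]$ is a basic open neighborhood of $\pi$ and $[\psi<\epsilon]$ is one of $\rho$; any common $\sigma=\etp^C(c)$ (with $C$ e.c.) would witness the simultaneous smallness forbidden by the compactness step, so these neighborhoods are disjoint.

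The only genuinely subtle point is the maximality-plus-embeddability step in the second paragraph, since without the restriction to maximal types the union $\pi\cup\rho$ can easily be consistent. The remainder is routine compactness together with the closure of existential formulas under finite $\max$.
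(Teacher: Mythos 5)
Your proof is correct and follows essentially the same route as the paper: maximality of the types yields an (approximate) joint inconsistency, and compactness then extracts the two formulas defining disjoint basic open sets. The only cosmetic difference is that you treat $\pi$ and $\rho$ symmetrically (showing $T\cup\pi\cup\rho$ is unsatisfiable and taking finite max-combinations), whereas the paper starts from a single existential formula in $\pi\setminus\rho$ and applies maximality only to $\rho$; both arguments are sound.
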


\begin{proof}
Suppose that $\pi,\rho\in S_n^\exists(T)$ are distinct.  Without loss of generality, we may take an existential formula $\varphi$ such that $\varphi=0$ belongs to $\pi$ but not to $\rho$.  By maximality of $\rho$, there must be some $\epsilon>0$ such that $\rho\cup\{\varphi\leq \epsilon\}$ is not satisfiable, whence, by compactness, there is some $\psi(x)$ and some $\delta>0$ such that $\psi(x)=0$ belongs to $\rho$ and $\{\psi(x)\leq \delta,\varphi(x)\leq \epsilon\}$ is not satisfiable.  It follows that $[\varphi<\epsilon]$ and $[\psi<\delta]$ are disjoint open neighborhoods of $\pi$ and $\rho$ respectively.
\end{proof}

There is also a natural metric on $S_n^\exists(T)$.

\begin{df}
For $\pi,\rho\in S_n^\exists(T)$, set
$$d(\pi,\rho):=\inf\{d(a,b) \ : \ a,b\in A\models T, \pi=\etp^A(a), \rho=\etp^A(b)\}.$$
\end{df}

Note that JEP is needed to ensure that $\pi$ and $\rho$ are realized in a common model, whence $d(\pi,\rho)<\infty$.  Note also that, by saturation, the infimum in the above definition is actually a minimum.

\begin{lemma}
$d$ is a metric on $S_n^\exists(T)$.
\end{lemma}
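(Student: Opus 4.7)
The plan is to verify the four axioms of a metric. Non-negativity is immediate from the definition, and symmetry follows at once because $d(a,b) = d(b,a)$ in every structure. So the two substantive points are definiteness and the triangle inequality.

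For definiteness, I will use the remark preceding the lemma: the infimum defining $d(\pi,\rho)$ is actually attained, by saturation of a sufficiently saturated model realizing both $\pi$ and $\rho$. Thus if $d(\pi,\rho) = 0$, there exist $A \models T$ and $a,b \in A$ with $\pi = \etp^A(a)$, $\rho = \etp^A(b)$, and $d(a,b) = 0$; hence $a = b$ and $\pi = \rho$.

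For the triangle inequality, fix $\pi,\sigma,\rho \in S_n^\exists(T)$. I will first observe the following preservation fact: if $A \subseteq C$ and $a \in A^n$, then $\etp^A(a) \subseteq \etp^C(a)$ (existential formulas can only decrease in value under embeddings), so if $\etp^A(a)$ happens to be maximal, then $\etp^C(a) = \etp^A(a)$. Now choose $A \models T$ and $a,c \in A$ with $\pi = \etp^A(a)$, $\sigma = \etp^A(c)$, and $d(a,c) = d(\pi,\sigma)$; similarly choose $B \models T$ and $c',b \in B$ with $\sigma = \etp^B(c')$, $\rho = \etp^B(b)$, and $d(c',b) = d(\sigma,\rho)$. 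Since $\etp^A(c) = \sigma = \etp^B(c')$, Lemma \ref{JEP} provides $C \models T$ and embeddings $i:A \to C$, $j:B \to C$ with $i(c) = j(c')$. By the preservation observation applied to the maximal types $\pi$ and $\rho$, we have $\etp^C(i(a)) = \pi$ and $\etp^C(j(b)) = \rho$. Then the triangle inequality in $C$ gives
\[
d(\pi,\rho) \leq d^C(i(a),j(b)) \leq d^A(a,c) + d^B(c',b) = d(\pi,\sigma) + d(\sigma,\rho).
\]

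The only step that requires genuine input beyond routine manipulation is the triangle inequality, and the main point there is the amalgamation through the common middle type $\sigma$ furnished by Lemma \ref{JEP}, combined with the remark that maximality of $\pi$ and $\rho$ prevents the ambient existential types from growing in the amalgam. I do not anticipate any real obstacle: the saturation argument for definiteness and the amalgamation argument for the triangle inequality are both standard once maximality is exploited.
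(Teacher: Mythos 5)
Your proof is correct and follows essentially the same route as the paper: the triangle inequality is obtained by amalgamating two realizations over the common middle type via Lemma \ref{JEP} and using maximality to see that the existential types are preserved in the amalgam. The only cosmetic differences are that you invoke the attained minimum (from the remark preceding the lemma) instead of an $\epsilon$-approximation, and you spell out definiteness, which the paper leaves implicit.
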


\begin{proof}
Reflexivity and symmetry are clear.  For transitivity, fix $\epsilon>0$ and take $A,A'\models T$, $a,b\in A$, $b',c\in A'$ such that $\pi=\etp^A(a)$, $\rho=\etp^A(b)=\etp^{A'}(b')$, and $\sigma=\etp^{A'}(c)$ with $d(a,b)\leq d(\pi,\rho)+\epsilon$ and $d(b',c)\leq d(\rho,\sigma)+\epsilon$.  By Lemma \ref{JEP}, there is $A''\models T$ and embeddings $i:A\to A''$ and $j:A'\to A''$ such that $i(b)=j(b')$.  By maximality, $\pi=\etp^{A''}(i(a))$, $\rho=\etp^{A''}(i(b))=\etp^{A''}(j(b'))$ and $\sigma=\etp^{A''}(j(c))$.  It follows that
$$d(\pi,\sigma)\leq d(i(a),j(c))\leq d(i(a),i(b))+d(j(b'),j(c))\leq d(\pi,\rho)+d(\rho,\sigma)+2\epsilon.$$  Since $\epsilon>0$ was arbitrary, we are done.
\end{proof}

Recall from \cite{topo} that a \emph{topometric space} is a triple $(X,\tau,d)$, where $(X,\tau)$ is a Hausdorff topological space, $(X,d)$ is a metric space, and the following two conditions holds:
\begin{itemize}
\item The metric topology refines the topology $\tau$.
\item $d$ is $\tau$-lower semi-continuous, i.e., for all $r>0$, the set $$\{(x,y)\in X^2 \ : \ d(x,y)\leq r\}$$ is $(\tau\times \tau)$-closed.
\end{itemize}

\begin{prop}
$S_n^\exists(T)$ is a topometric space.
\end{prop}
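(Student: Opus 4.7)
The plan is to verify the two defining conditions of a topometric space: that the metric topology on $S_n^\exists(T)$ refines the logic topology $\tau$, and that $d$ is $\tau$-lower semi-continuous. The single general fact I will rely on throughout is that every existential formula $\varphi(x)=\inf_y \psi(x,y)$ inherits a modulus of uniform continuity from its quantifier-free kernel $\psi$, uniformly in the ambient $L$-structure.

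For refinement, I fix $\pi\in S_n^\exists(T)$ and a basic $\tau$-open neighborhood $[\varphi<\epsilon]$ of $\pi$ (so $\varphi$ is existential and $\varphi=0\in\pi$), and I will exhibit a metric ball around $\pi$ contained in $[\varphi<\epsilon]$.  I choose $\delta>0$ small enough for the modulus of uniform continuity of $\varphi$ to guarantee $|\varphi^A(a)-\varphi^A(b)|<\epsilon$ whenever $d(a,b)<\delta$.  For any $\rho$ with $d(\pi,\rho)<\delta$, the definition of $d$ directly supplies a common $A\models T$ with $\pi=\etp^A(a)$, $\rho=\etp^A(b)$, and $d(a,b)<\delta$; since $\varphi^A(a)=0$, uniform continuity gives $\varphi^A(b)<\epsilon$ and hence $\rho\in[\varphi<\epsilon]$.

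For lower semi-continuity, I will show the complement $\{(\pi,\rho) \ : \ d(\pi,\rho)>r\}$ is $(\tau\times\tau)$-open.  Fix $(\pi,\rho)$ with $d(\pi,\rho)>r+2\epsilon$.  In any common realization the distance is at least $d(\pi,\rho)>r+\epsilon$, so the set of conditions $T\cup\pi(x)\cup\rho(y)\cup\{d(x,y)\leq r+\epsilon\}$ is unsatisfiable.  Continuous-logic compactness then produces $\delta>0$ together with existential formulas $\varphi_1,\dots,\varphi_k\in\pi$ and $\psi_1,\dots,\psi_l\in\rho$ whose weakening $T\cup\{\varphi_i\leq \delta\}\cup\{\psi_j\leq \delta\}\cup\{d(x,y)\leq r+\epsilon\}$ remains unsatisfiable.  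Setting $\varphi:=\max_i\varphi_i$ and $\psi:=\max_j\psi_j$ (pulling the inner infima past the $\max$ shows that a finite maximum of existential formulas is again existential), I claim $[\varphi<\delta]$ and $[\psi<\delta]$ are the required neighborhoods of $\pi$ and $\rho$.  Indeed, for any $\pi'\in[\varphi<\delta]$ and $\rho'\in[\psi<\delta]$, JEP produces a common $A'\models T$ realizing both; maximality of $\pi'$ and $\rho'$ combined with the fact that existential formulas weakly decrease in extensions preserves $\varphi^{A'}<\delta$ and $\psi^{A'}<\delta$ at the images $a',b'$, and the unsatisfiability above then forces $d(a',b')>r+\epsilon$, whence $d(\pi',\rho')\geq r+\epsilon>r$.

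I expect the main subtlety to be the compactness bookkeeping in the lower semi-continuity argument, particularly the packaging of finitely many existential conditions into single existential formulas $\varphi,\psi$ and the careful tracking of how the strict inequalities in $[\varphi<\delta]$ and $[\psi<\delta]$ interact with the non-strict ones produced by continuous-logic compactness (a slight shrinking of $\delta$, or equivalently an initial choice of $\epsilon$ with a bit of slack, handles the latter).  Everything else is a direct unwinding of the definitions.
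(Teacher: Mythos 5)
Your proof is correct and follows essentially the same route as the paper: the refinement clause is immediate (the paper dismisses it as clear, while you supply the uniform-continuity argument), and lower semi-continuity is obtained exactly as in the paper by applying compactness to the unsatisfiable set $T\cup\pi(x)\cup\rho(y)\cup\{d(x,y)\leq r\}$ and then using JEP, maximality, and the monotonicity of existential formulas under extensions to push the resulting neighborhoods $[\varphi<\delta]$, $[\psi<\delta]$ apart. The one spot to tighten is the final inequality: since $d(\pi',\rho')$ is an infimum over \emph{all} common realizations, you should run your argument on an arbitrary common realization (embed it into an e.c.\ model, where maximality and monotonicity give $\varphi<\delta$ and $\psi<\delta$) rather than only on the single model produced by JEP — this uses exactly the ingredients you already have, and your $\epsilon$-slack then gives $d(\pi',\rho')\geq r+\epsilon>r$ without even invoking the paper's remark that the infimum in the definition of $d$ is attained.
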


\begin{proof}
It is clear that $d$ refines the logic topology.  For the second item, suppose that $d(\pi,\rho)>r$.  Then $T\cup \pi(x)\cup \rho(y)\cup \{d(x,y)\leq r\}$ is not satisfiable, so by compactness, there are existential formulae $\varphi$ and $\psi$ and $\delta>0$ such that $\varphi=0$ belongs to $\pi$, $\psi=0$ belongs to $\rho$, and $T\cup\{\varphi(x)<\delta,\psi(y)<\delta,d(x,y)\leq r\}$ is not satisfiable.  It follows that $d(\pi',\rho')>r$ for any $\pi'\in[\varphi<\delta]$ and any $\rho'\in [\psi<\delta]$.    
\end{proof}

\subsection{Isolated existential types and $e$-atomic models}

We continue to assume that $T$ is an $\forall\exists$-axiomatizble theory with JEP.  As discussed in \cite{topo}, in topometric spaces there are two appropriate notions of isolated point.  For a topometric space $(X,\tau,d)$, $x\in X$ is called:
\begin{itemize}
\item \emph{$d$-isolated} if the two topologies agree at $x$;
\item \emph{weakly $d$-isolated} if, for every $\epsilon>0$, the open ball $B(x,\epsilon)$ centered at $x$ of radius $\epsilon$ has nonempty $\tau$-interior. 
\end{itemize}

Clearly every $d$-isolated point is weakly $d$-isolated.  In general topometric spaces, these notions may be distinct.  However, we have:

\begin{lemma}
In $S_n^\exists(T)$, every weakly $d$-isolated point is $d$-isolated.
\end{lemma}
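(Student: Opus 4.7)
My plan is to combine the weak $d$-isolation of $\pi$ with the amalgamation property from Lemma \ref{JEP} to produce an explicit existential formula whose basic sublevel set is simultaneously a logic-open neighborhood of $\pi$ and contained in a prescribed metric ball $B(\pi,\epsilon)$.

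Fix $\epsilon>0$. Applying weak $d$-isolation at scale $\epsilon/3$, pick an existential formula $\psi$ and $\delta>0$ so that $[\psi<\delta]$ is nonempty and contained in $B(\pi,\epsilon/3)$; select $\rho_0\in[\psi<\delta]$. Fix an e.c.\ model $A$ with $\pi=\etp^A(a)$. Since $d(\pi,\rho_0)<\epsilon/3$, there exist an e.c.\ model $B$ and $a',c'\in B$ with $\etp^B(a')=\pi$, $\etp^B(c')=\rho_0$, and $d(a',c')<\epsilon/3$. Because $\etp^A(a)=\pi=\etp^B(a')$, Lemma \ref{JEP} produces a model $A''\models T$ with embeddings $A,B\hookrightarrow A''$ identifying $a$ and $a'$; as $A$ is e.c., this extends to an embedding $A''\hookrightarrow A^{\u}$ restricting to the diagonal on $A$. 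Letting $c\in A^{\u}$ be the image of $c'$, we then have $\etp^{A^{\u}}(c)=\rho_0$, $\psi^{A^{\u}}(c)<\delta$, and $d(a,c)<\epsilon/3$.

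Choose a small rational $\eta>0$ with $\eta<\min(\delta,\epsilon/3)$, and define the existential formula
\[
\varphi(x)\;:=\;\inf_y\max\!\bigl(\psi(y)\dotminus(\delta-\eta),\; d(x,y)\dotminus(\epsilon/3-\eta)\bigr).
\]
Using $y=c$ in the infimum gives $\varphi^\pi<\eta$, so $\pi\in[\varphi<\eta]$. Conversely, suppose $\rho\in[\varphi<\eta]$ and let $c_\rho$ realize $\rho$ in an e.c.\ model; there is a witness $y$ to $\varphi^\rho<\eta$ satisfying $\psi(y)\dotminus(\delta-\eta)<\eta$ and $d(c_\rho,y)\dotminus(\epsilon/3-\eta)<\eta$, which force $\psi(y)<\delta$ and $d(c_\rho,y)<\epsilon/3$. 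Hence $\etp(y)\in[\psi<\delta]\subseteq B(\pi,\epsilon/3)$ and $d(\etp(y),\rho)\le d(c_\rho,y)<\epsilon/3$, giving $d(\pi,\rho)<2\epsilon/3<\epsilon$. Thus $[\varphi<\eta]$ is a logic-open neighborhood of $\pi$ inside $B(\pi,\epsilon)$, and $\pi$ is $d$-isolated.

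The only nontrivial step is the amalgamation via Lemma \ref{JEP}: without it, weak $d$-isolation alone would only furnish an open set near \emph{some} realization of $\pi$, not a neighborhood of the point $\pi\in S_n^\exists(T)$ itself. It is precisely the ability to identify the two realizations $a$ and $a'$ of the common type $\pi$ inside a single model that allows us to transfer a realization of $\rho_0$ into $A^{\u}$ at the correct distance from our fixed $a$, and thereby convert ``nearby openness'' into genuine neighborhood openness.
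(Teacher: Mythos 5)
Your argument is correct: the ``fattening'' formula $\varphi(x)=\inf_y\max\bigl(\psi(y)\dotminus(\delta-\eta),\,d(x,y)\dotminus(\epsilon/3-\eta)\bigr)$ is (equivalent to) an existential formula, the forward direction gives $\varphi^\pi<\eta$ (since $A$ is e.c.\ in $A^\u$ and $\varphi$ is existential, the value computed via the witness $c\in A^\u$ is the value at $\pi$), and the reverse direction correctly bounds $d(\pi,\rho)<2\epsilon/3$ for every $\rho\in[\varphi<\eta]$, so $[\varphi<\eta]$ is a logically open neighborhood of $\pi$ inside $B(\pi,\epsilon)$. The paper itself does not write this out: it simply cites \cite[Proposition 12.5]{bbhu} and asserts that the proof for $S_n(T)$ applies to $S_n^\exists(T)$ verbatim. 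Your write-up is in essence that same classical proof, but you make explicit the one point where ``verbatim'' needs checking: in $S_n(T)$ one places a realization of the nearby type $\rho_0$ at the right distance from a fixed realization of $\pi$ using saturation and homogeneity of a monster model, whereas for maximal existential types one must instead amalgamate over the two realizations of $\pi$ via Lemma \ref{JEP} and then use that $A$ is e.c.\ to pull the configuration into $A^\u$ over the diagonal copy of $A$. So your proof buys a self-contained verification in the existential setting (and identifies exactly which ingredients---maximality of types realized in e.c.\ models, Lemma \ref{JEP}, and the semantic characterization of e.c.---replace the monster-model machinery), at the cost of length; the paper's citation buys brevity by leaving that adaptation to the reader.
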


\begin{proof}
The corresponding fact for $S_n(T)$ is \cite[Proposition 12.5]{bbhu}; we note that the proof applies to $S_n^\exists(T)$ verbatim.
\end{proof}

We may thus just refer to \emph{isolated types} in $S_n^\exists(T)$.
\begin{cor}
The set of isolated types in $S_n^\exists(T)$ is metrically closed.
\end{cor}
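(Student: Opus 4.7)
The plan is to argue directly from the definitions, using the preceding lemma to reduce the problem of establishing $d$-isolation to the ostensibly weaker task of establishing weak $d$-isolation. Specifically, I would fix $\pi$ in the metric closure of the set of isolated types and aim to show $\pi$ itself is weakly $d$-isolated, i.e.\ that for every $\epsilon>0$ the open metric ball $B(\pi,\epsilon)$ has nonempty $\tau$-interior.

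First, given $\epsilon>0$, I would use the hypothesis that $\pi$ lies in the $d$-closure of the isolated types to produce an isolated type $\pi'$ with $d(\pi',\pi)<\epsilon/2$. Since $\pi'$ is isolated (hence weakly $d$-isolated by the previous lemma, though the forward direction is immediate), the ball $B(\pi',\epsilon/2)$ contains a nonempty $\tau$-open set $U$. The triangle inequality for $d$ then gives
$$U\subseteq B(\pi',\epsilon/2)\subseteq B(\pi,\epsilon),$$
exhibiting $U$ as a nonempty $\tau$-open subset of $B(\pi,\epsilon)$. This shows $\pi$ is weakly $d$-isolated, and invoking the preceding lemma one more time concludes that $\pi$ is $d$-isolated.

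I do not anticipate a real obstacle here: the argument is a purely formal topometric fact, relying only on the triangle inequality and the equivalence between $d$-isolation and weak $d$-isolation proved in the lemma just before the corollary. The only thing worth being careful about is not to conflate $\tau$-closure with $d$-closure: the statement is about the coarser metric notion of closure, which is the reason the simple triangle-inequality argument above even applies.
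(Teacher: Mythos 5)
Your argument is correct and is essentially the paper's proof: the paper simply cites \cite[Lemma 2.2]{topo} for the general fact that the weakly $d$-isolated points of any topometric space are metrically closed, and that cited fact is proved by exactly the triangle-inequality argument you give, after which both you and the paper invoke the preceding lemma identifying weakly $d$-isolated with $d$-isolated points of $S_n^\exists(T)$. Your only addition is making the cited general fact self-contained, which is fine.
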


\begin{proof}
In \cite[Lemma 2.2]{topo}, it is shown that the set of weakly $d$-isolated points in an arbitrary topometric space is metrically closed.
\end{proof}

Suppose that $\pi$ is isolated, $\epsilon>0$, and $O$ is a logically open set contained in $B(\pi,\epsilon)$.  If $B\models T$ is e.c. and $\etp^B(b)\in O$, then a priori, all we are guaranteed is that there are realizations of $\pi$ and $\etp^B(b)$ in some (possibly different) e.c. model of $T$ that are within $\epsilon$ of each other.  Our next goal is to show that this can in fact be improved by showing that, after possibly shrinking $O$, if $\etp^B(b)\in O$, then there is some realization of $\pi$ \emph{in $B$} that is within $\epsilon$ of $b$.  First, a preliminary lemma.

\begin{lemma}
Fix $\pi\in S_n^\exists(T)$ and $\epsilon>0$.  Suppose that $O$ is a logically open neighborhood of $\pi$ contained in $B(\pi,\epsilon)$.  Suppose that $B\models T$ is e.c. and $b\in B$ is such that $\etp^B(b)\in O$.  Then for all logically open $U$ containing $\pi$, there is $b'\in B$ such that $\etp^B(b')\in U$ and $d(b,b')<\epsilon$.
\end{lemma}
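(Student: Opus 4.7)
The plan is to produce, inside the ultrapower $B^\u$, a realization of $\pi$ that is close to $b$, and then use that $B$ is existentially closed in $B^\u$ to pull an approximate version of this witness back into $B$ itself.

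To begin, I would unpack the hypothesis.  Since $\etp^B(b)\in O\subseteq B(\pi,\epsilon)$, we have $d(\pi,\etp^B(b))<\epsilon$, so by the definition of the metric on $S_n^\exists(T)$ there is an e.c.\ model $M\models T$ with realizations $a^*$ of $\pi$ and $b^*$ of $\etp^B(b)$ satisfying $d(a^*,b^*)<\epsilon$.  Because $\etp^M(b^*)=\etp^B(b)$, Lemma \ref{JEP} provides $N\models T$ together with embeddings $M\hookrightarrow N$ and $B\hookrightarrow N$ that identify $b^*$ with $b$.  Since $B$ is e.c., I may then extend the inclusion $B\hookrightarrow N$ to an embedding $N\hookrightarrow B^\u$ restricting to the diagonal embedding on $B$.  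Letting $a\in B^\u$ denote the image of $a^*$ under this composition, we obtain a realization of $\pi$ in $B^\u$ with $d(a,b)<\epsilon$.

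Next, without loss of generality I write $U=\bigcap_{i=1}^k[\psi_i<s_i]$ with each $\psi_i=0$ belonging to $\pi$ and each $s_i>0$.  Since $a$ realizes $\pi$, $\psi_i^{B^\u}(a)=0<s_i$, and I may choose $\eta>0$ small enough that $s_i-\eta>0$ for every $i$ and $d(a,b)<\epsilon-\eta$.  Consider the formula
$$\theta(y):=\inf_x\max\Bigl(\max_{1\leq i\leq k}\bigl(\psi_i(x)\dotminus(s_i-\eta)\bigr),\ d(x,y)\dotminus(\epsilon-\eta)\Bigr).$$
Each $\psi_i$ is existential, and since infimum commutes with maximum over independent variables, $\theta$ is itself an existential formula.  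The witness $a$ shows $\theta^{B^\u}(b)=0$.

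Finally, because $B$ is e.c.\ in $B^\u$, we conclude $\theta^B(b)=0$.  Hence for any $\delta\in(0,\eta)$ there is $b'\in B$ with $\psi_i^B(b')<s_i-\eta+\delta<s_i$ for each $i$ and $d(b',b)<\epsilon-\eta+\delta<\epsilon$.  Since $B$ is e.c., $\etp^B(b')\in S_n^\exists(T)$, and the strict inequalities give $\etp^B(b')\in U$, as required.  The main obstacle is really the packaging step: the simultaneous requirements ``$x$ satisfies the defining inequalities of $U$'' and ``$x$ is within $\epsilon$ of $b$'' must be bundled into a single existential formula with parameter $b$ so that the e.c.\ property can transfer the witness from $B^\u$ back to $B$.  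Once this bundling is set up, the amalgamation via Lemma \ref{JEP} placing a realization of $\pi$ into $B^\u$ and the subsequent transfer back to $B$ are standard applications.
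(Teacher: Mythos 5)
Your proof is correct and follows essentially the same route as the paper: realize $\pi$ and $\etp^B(b)$ within $\epsilon$ of each other in a common e.c.\ model, amalgamate with $B$ over $b$ via Lemma \ref{JEP}, and transfer a witness back to $B$ using that $B$ is existentially closed; you merely spell out the bundling into a single existential formula $\theta(y)$ that the paper leaves implicit. The detour through $B^\u$ is unnecessary (and slightly delicate if $N$ is nonseparable): since $N\models T$ and $B$ is an e.c.\ model of $T$, you can conclude $\theta^B(b)=\theta^N(b)=0$ directly.
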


\begin{proof}
Fix a logically open neighborhood $U$ of $\pi$.  By hypothesis, there is e.c. $C\models T$ and $c,d\in C$ such that $\pi=\etp^C(c)$, $\etp^B(b)=\etp^C(d)$, and $d(c,d)<\epsilon$.  By Lemma \ref{JEP}, there is e.c. $D\models T$ and $i:B\to D$ and $j:C\to D$ such that $i(b)=j(d)$.  Thus, $\pi=\etp^D(j(c))$, $\etp^B(b)=\etp^D(i(b))$, and $d(i(b),j(c))<\epsilon$.  The result now follows from the fact that $B$ is e.c. in $D$.
\end{proof}


\begin{prop}\label{defset}
Suppose that $\pi\in S_n^\exists(T)$ is isolated.  Then for all $\epsilon>0$, there is a logically open set $O$ such that if $B\models T$ is e.c., $b\in B$ and $\etp^B(b)\in O$, then there is $c\in B$ with $\pi=\etp^B(c)$ and $d(b,c)<\epsilon$.
\end{prop}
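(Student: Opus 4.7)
The strategy is to iterate the preceding lemma to build a Cauchy sequence in $B$ whose limit realizes $\pi$ and lies within $\epsilon$ of $b$. The two key ingredients are: (a) $d$-isolation of $\pi$, which provides arbitrarily ``metrically thin'' logically open neighborhoods of $\pi$; and (b) the $1$-Lipschitz estimate $d(\etp^B(b_1),\etp^B(b_2))\leq d(b_1,b_2)$ for tuples in the e.c.\ model $B$, which is immediate from the definition of the metric on $S_n^\exists(T)$ since both types are realized in the common e.c.\ model $B$.

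Since $\pi$ is $d$-isolated, for each $i\geq 0$ I can select a logically open neighborhood $O_i$ of $\pi$ with $O_i\subseteq B(\pi,\epsilon/2^{i+2})$. I take $O:=O_0$ as the open set promised by the proposition. Suppose $B\models T$ is e.c.\ and $b\in B$ satisfies $\etp^B(b)\in O$. I define a sequence $(b_i)_{i\geq 0}$ in $B$ recursively: set $b_0:=b$, and given $b_i$ with $\etp^B(b_i)\in O_i$, apply the preceding lemma with neighborhood $O_i\subseteq B(\pi,\epsilon/2^{i+2})$ and target open set $U:=O_{i+1}$ to produce $b_{i+1}\in B$ with $\etp^B(b_{i+1})\in O_{i+1}$ and $d(b_i,b_{i+1})<\epsilon/2^{i+2}$.

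Since $\sum_{i=0}^{\infty}\epsilon/2^{i+2}=\epsilon/2$, the sequence $(b_i)$ is Cauchy, hence converges to some $c\in B$ with $d(b,c)\leq\epsilon/2<\epsilon$. To see $\etp^B(c)=\pi$: the Lipschitz estimate gives $d(\etp^B(b_i),\etp^B(c))\leq d(b_i,c)\to 0$, while $\etp^B(b_i)\in O_i\subseteq B(\pi,\epsilon/2^{i+2})$ gives $d(\etp^B(b_i),\pi)\to 0$. The triangle inequality then forces $d(\etp^B(c),\pi)=0$, so $\etp^B(c)=\pi$.

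The main obstacle, as I see it, is not the construction itself but recognizing that the preceding lemma is already strong enough once isolation is used properly: its conclusion only places $b'$ in some arbitrary logically open neighborhood of $\pi$, not at an actual realization, so a priori it seems weaker than what we need. The point is that it can be iterated with open neighborhoods whose metric diameters shrink geometrically (thanks to $d$-isolation), and this geometric iteration is summable in the complete metric space $B$, so completeness of $B$ produces the realization in the limit.
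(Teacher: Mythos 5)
Your proof is correct and follows essentially the same route as the paper: iterate the preceding lemma along logically open neighborhoods of $\pi$ of geometrically shrinking metric radius, obtain a Cauchy sequence in $B$, and take its limit. The only difference is cosmetic bookkeeping of the constants, plus your explicit verification (via the $1$-Lipschitz estimate and maximality of types in the e.c.\ model $B$) that the limit realizes $\pi$, a step the paper leaves implicit.
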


\begin{proof}
Take $K\in \n$ such that $\sum_{k=K}^\infty 2^{-k}<\epsilon$.  For $k\geq K$, let $O_k$ be a logically open neighborhood of $\pi$ contained in $B(\pi,2^{-k})$.  Set $O:=O_K$.  We claim that $O$ is as desired.  Suppose that $b\in B$ and $\etp^B(b)\in O$.  By the previous lemma, there is $b_1\in B$ such that $\etp^B(b_1)\in O_{K+1}$ and $d(b,b_1)<2^{-K}$.  By the previous lemma again, there is $b_2\in B$ such that $\etp^B(b_2)\in O_{K+2}$ and $d(b_1,b_2)<2^{-K+1}$.  Continuing in this way, it follows that $(b_k)_{k\geq K}$ is a Cauchy sequence in $B$.  Set $c=\lim b_k$.  We have that $d(b,b')\leq \sum_{k\geq K}d(b_k,b_{k+1})<\epsilon$ and $\etp^B(c)=\pi$.
\end{proof}

\begin{df}
$A\models T$ is called \emph{$e$-atomic} if, for every $n\geq 1$ and every $n$-tuple $a$ from $A$,  $\etp^A(a)$ is an isolated element of $S_n^\exists(T)$.
\end{df}

Note that, in particular, every existential type realized in an $e$-atomic model is maximal, so $e$-atomic models are e.c. 


%


The proof of the following fact follows the outline of the corresponding fact for atomic models of complete theories given by Bradd Hart in his online lecture notes \cite[Lecture 7]{bhart}.  We recall our outstanding assumption that $T$ has JEP.

\begin{prop}\label{uniqueatomic}
If $A,B\models T$ are both separable and $e$-atomic, then $A\cong B$.
\end{prop}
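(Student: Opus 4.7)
The plan is to carry out a back-and-forth construction producing an isometric $L$-isomorphism between $A$ and $B$ as the limit of a Cauchy sequence of partial matchings that respect isolated existential types.

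First I would establish an extension lemma: given $\bar{a}\in A^n$ and $\bar{b}\in B^n$ with $\etp^A(\bar{a})=\etp^B(\bar{b})$, an element $a\in A$, and $\epsilon>0$, there exist $\bar{b}'\in B^n$ and $b'\in B$ with $\etp^A(\bar{a},a)=\etp^B(\bar{b}',b')$ and $d(\bar{b},\bar{b}')<\epsilon$. Setting $\pi:=\etp^A(\bar{a},a)$, this type is isolated since $A$ is $e$-atomic, so Proposition \ref{defset} yields a logically open neighborhood $O$ of $\pi$ with the stated transfer property. Shrink $O$ to a basic neighborhood $[\psi<r]$ for some existential $\psi$ with $\psi=0$ in $\pi$. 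The formula $\inf_{x}\psi(\bar{y},x)$ is existential and vanishes at $\bar{a}$ in $A$; since it belongs to $\etp^A(\bar{a})=\etp^B(\bar{b})$, it also vanishes at $\bar{b}$ in $B$, giving $b^*\in B$ with $\etp^B(\bar{b},b^*)\in[\psi<r]\subseteq O$. A second application of Proposition \ref{defset}, now to the $(n+1)$-tuple $(\bar{b},b^*)$, produces $(\bar{b}',b')$ realizing $\pi$ within distance $\epsilon$ of $(\bar{b},b^*)$, and in particular $d(\bar{b},\bar{b}')<\epsilon$.

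Next I would fix dense sequences $(a_k)_{k\geq 1}\subseteq A$ and $(b_k)_{k\geq 1}\subseteq B$ and inductively build tuples $\bar{\alpha}^{(n)}\in A^n$ and $\bar{\beta}^{(n)}\in B^n$ satisfying: (i) $\etp^A(\bar{\alpha}^{(n)})=\etp^B(\bar{\beta}^{(n)})$; (ii) $d(\alpha_j^{(n+1)},\alpha_j^{(n)})<2^{-(n+1)}$ and $d(\beta_j^{(n+1)},\beta_j^{(n)})<2^{-(n+1)}$ for all $j\leq n$; (iii) at an even stage $n+1=2k$ we append $a_k$ to the $A$-side and invoke the extension lemma with tolerance $2^{-(n+1)}$ to produce a matching $\bar{\beta}^{(n+1)}$, while at an odd stage $n+1=2k+1$ we append $b_k$ to the $B$-side and apply the symmetric extension lemma on $A$. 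The base case $n=0$ is valid because JEP together with $A,B$ being e.c.\ forces them to have a common existential theory, so the empty existential type agrees.

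Finally, (ii) makes each coordinate sequence $(\alpha_j^{(n)})_{n\geq j}$ and $(\beta_j^{(n)})_{n\geq j}$ Cauchy with limits $\alpha_j\in A$ and $\beta_j\in B$, and (iii) together with density of $(a_k),(b_k)$ ensures $(\alpha_j)$ and $(\beta_j)$ are dense in $A$ and $B$ respectively. Passing to the limit in (i), using uniform continuity of existential (and in particular quantifier-free) formulae, gives $\varphi(\bar{\alpha})^A=\varphi(\bar{\beta})^B$ for every quantifier-free formula $\varphi$ applied to corresponding finite subtuples; so $\alpha_j\mapsto\beta_j$ is an isometry preserving quantifier-free structure on a dense subset, extending by continuity to the required isomorphism $A\cong B$. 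The main obstacle is the extension lemma: because Proposition \ref{defset} does not let us fix the first $n$ coordinates while transferring the $(n+1)$-type $\pi$, exact type-matching at each finite stage is unavailable, forcing a Cauchy-style back-and-forth with controlled perturbations on both sides and a summable error budget so that the limit is genuinely an isomorphism.
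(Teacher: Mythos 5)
Your construction is essentially the proof in the paper: the same extension mechanism (isolate $\pi=\etp^A(\bar a,a)$, take the logically open $O$ from Proposition \ref{defset}, shrink to a basic neighborhood $[\psi<r]$ with $\psi=0\in\pi$, transfer $\inf_x\psi(\bar y,x)=0$ across the equal existential types to get an approximate realization in $B$, then use $O$ to correct it to an exact realization at the cost of a small perturbation of the previously chosen tuple), followed by the same summable-error back-and-forth and passage to the limit.

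The one step that does not follow as written is your claim that clause (iii) ``together with density of $(a_k),(b_k)$'' makes the limit sequences $(\alpha_j),(\beta_j)$ dense. In your bookkeeping each $a_k$ is appended exactly once, at stage $2k$, and the coordinate that equals $a_k$ at that moment is subsequently perturbed, so its limit is only guaranteed to lie within roughly $\sum_{m>2k}2^{-m}\approx 2^{-2k}$ of $a_k$. Density of the limit sequence would then require, for every $x\in A$ and $\epsilon>0$, an index $k$ with $d(a_k,x)<\epsilon/2$ \emph{and} $2^{-2k}<\epsilon/2$, i.e.\ points of the dense sequence near $x$ with arbitrarily large index; an injective enumeration need not provide this (for instance at a metrically isolated point of $A$ whose isolation radius is much smaller than the error budget at the unique stage where it is enumerated, the limit sequence can miss it permanently, and then the limit map is not surjective). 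The fix is exactly the device in the paper's proof: enumerate so that each element of the dense sets is revisited infinitely often (equivalently, list each point infinitely many times); then for every $k$ and every tail of the construction some coordinate's limit lies within an arbitrarily small tail sum of $a_k$, so the closure of $\{\alpha_j\}$ contains the dense set $\{a_k\}$, and likewise on the $B$ side. With that adjustment your proof is complete and coincides with the paper's.
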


\begin{proof}
We will produce sequences
$$a_0^0, a_0^1a_1^1, a_0^2a_1^2a_2^2,\ldots,$$
and
$$b_0^0, b_0^1b_1^1, b_0^2b_1^2b_2^2,\ldots,$$
from $A$ and $B$ respectively such that:
\begin{enumerate}
\item for all $n\geq k$, $\etp^A(a_0^n\cdots a_k^n)=\etp^B(b_0^n\cdots b_k^n)$;
\item for all $k\leq n$, $d(a_k^n,a_k^{n+1}),d(b_k^n,b_k^{n+1})\leq 2^{-n}$; consequently, for every $k$, $(a_k^n)_{n\geq k}$ and $(b_k^n)_{n\geq k}$ are Cauchy sequences in $A$ and $B$ respectively whose limits we shall denote by $a_k$ and $b_k$;
\item $(a_k)$ and $(b_k)$ are dense in $A$ and $B$ respectively.
\end{enumerate}
Assuming that these sequences have been produced, then the map $a_k\mapsto b_k$ clearly extends to an isomorphism from $A$ to $B$.

Let $(c_k)$ and $(d_k)$ enumerate countable, dense subsets of $A$ and $B$ respectively.  We perform the usual back-and-forth style argument, at each stage putting either some $c_k$ in the sequences of $a$'s or some $d_k$ in the sequence of $b$'s, revisiting each $c_k$ and $d_k$ infinitely often.  We start by setting $a_0^0:=c_0$.  Let $O$ be a logically open set contained in $B(\etp^A(c_0),\frac{1}{2})$.  By JEP, there is $b\in B$ such that $\etp^B(b)\in O$.  By Proposition \ref{defset}, there is $b'\in B$ such that $\etp^B(b')=\etp^A(c_0)$.  We set $b_0^0$ to be this $b'$.

Now suppose that we have constructed $a_0^na_1^n\cdots a_n^n$ and $b_0^nb_1^n\cdots b_n^n$ and we are considering $c_k$.  We set $a_0^na_1^n\cdots a_{n+1}^{n+1}:=a_0^na_1^n\cdots a_n^nc_k$, $\pi:=\etp^A(a_0^na_1^n\cdots a_n)$, and $\rho=\etp^A(a_0^na_1^n\cdots a_nc_k)$.  Let $O$ be a basic logically open set as guaranteed to exist by Proposition \ref{defset} for $\pi$ and $2^{-n}$, say $O=[\varphi(x_0,\dots,x_n,y)<\epsilon]$.  Since $A\models \inf_y \varphi(a_0^n,\ldots,a_n^n,y)<\epsilon$, by the inductive assumption, we have that $B\models \inf_y \varphi(b_0^n,\ldots,b_n^n,y)<\epsilon$.  By Proposition \ref{defset}, there is $b_0^{n+1}b_1^{n+1}\cdots b_{n+1}^{n+1}\in B$ such that $\rho=\etp^B(b_0^{n+1}\cdots b_{n+1}^{n+1})$ and $d(b_i^n,b_i^{n+1})\leq 2^{-n}$ for $i\leq n$.

We clearly have (1) and (2).  It remains to show (3).  Fix $\epsilon>0$ and take $N\in \n$ such that $\sum_{n\geq N}2^{-n}<\epsilon$.  Suppose $c_k$ is visited at stage $n>N$.  Then $d(a_n,c_k)<\epsilon$; since the $(c_k)$'s are dense, we get that $(a_n)$ is dense.  The same argument holds for $(b_n)$.

\end{proof}

A ``forth-only'' version of the above proof shows:

\begin{prop}\label{atomicembeds}
If $A$ is an $e$-atomic model of $T$, then $A$ embeds into all e.c. models of $T$.
\end{prop}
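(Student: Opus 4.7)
The plan is to mimic the back-and-forth construction in the proof of Proposition \ref{uniqueatomic}, performing only the ``forth'' half. Fix a countable dense sequence $(c_k)$ in $A$ and inductively produce tuples $\bar a^n = (a_0^n, \ldots, a_n^n)$ in $A$ and $\bar b^n = (b_0^n, \ldots, b_n^n)$ in $B$ satisfying, for all $n$ and all $i \leq n$: (i) $\etp^A(\bar a^n) = \etp^B(\bar b^n)$; (ii) $d(a_i^n, a_i^{n+1}), d(b_i^n, b_i^{n+1}) \leq 2^{-n}$; and (iii) each $c_k$ is installed as some $a_m^n$ infinitely often.  By (ii) the limits $a_k := \lim_n a_k^n$ and $b_k := \lim_n b_k^n$ exist in $A$ and $B$; by (iii) the set $\{a_k\}$ is dense in $A$; and since existential formulas are uniformly continuous, (i) passes to the limit, giving $\etp^A(a_0, \ldots, a_j) = \etp^B(b_0, \ldots, b_j)$ for every $j$.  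Because quantifier-free formulas are existential, this forces $a_k \mapsto b_k$ to be isometric and to respect all function and predicate symbols, so it extends to the desired embedding $A \hookrightarrow B$.

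The inductive step is exactly the forth half of the one in Proposition \ref{uniqueatomic}.  Given $\bar a^n, \bar b^n$ satisfying (i), suppose the enumeration calls on us to visit $c_k$; set $\bar a^{n+1} := (\bar a^n, c_k)$ (so the $\bar a$-half of (ii) is automatic) and put $\rho := \etp^A(\bar a^{n+1})$, which is isolated by $e$-atomicity of $A$.  Proposition \ref{defset} applied to $\rho$ with tolerance $2^{-n}$ produces a basic logically open neighborhood $O = [\varphi(x_0, \ldots, x_n, y) < \epsilon]$ of $\rho$.  Since $\inf_y \varphi(x_0, \ldots, x_n, y)$ is existential, condition (i) yields
\[
\inf_y \varphi^B(\bar b^n, y) \;=\; \inf_y \varphi^A(\bar a^n, y) \;\leq\; \varphi^A(\bar a^n, c_k) \;<\; \epsilon,
\]
so some $y_0 \in B$ has $\etp^B(\bar b^n, y_0) \in O$.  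Applying Proposition \ref{defset} inside the e.c.\ model $B$ then gives $\bar b^{n+1}$ of length $n+2$ with $\etp^B(\bar b^{n+1}) = \rho$ and $d(b_i^n, b_i^{n+1}) \leq 2^{-n}$ for $i \leq n$.  The base case is the special instance of this argument with empty $\bar a^n$ and $\bar b^n$: since $A$ and $B$ are both e.c.\ and $T$ has JEP, they embed into a common model of $T$, forcing existential sentences to take the same value in $A$ as in $B$, and the same calculation produces $b_0^0 \in B$ with $\etp^B(b_0^0) = \etp^A(c_0)$.

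The main obstacle I anticipate is purely bookkeeping: the essential analytic content --- that isolated existential types can be realized exactly (not merely approximately) in every e.c.\ model of $T$, and that such realizations can be selected coherently under small perturbations --- has already been packaged into Proposition \ref{defset}, and the ``forth-only'' modification of Proposition \ref{uniqueatomic} is then entirely routine.
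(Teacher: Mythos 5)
Your proof is correct and is essentially the paper's own argument: the paper proves this proposition precisely by running a ``forth-only'' version of the back-and-forth from Proposition \ref{uniqueatomic}, using Proposition \ref{defset} at each stage exactly as you do (and your reading that \ref{defset} is applied to the extended type $\rho$ is the intended one). The only additional content in your write-up is the routine verification that equality of maximal existential types gives equality of existential-formula values and passes to limits, which is fine.
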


We will see later (Corollary \ref{embedsimpliesatomic}) that the converse of this proposition holds.  Now that we have settled the uniqueness of separable $e$-atomic models, the question of existence remains.  We first note a necessary condition.

\begin{lemma}\label{atomicdense}
If $T$ has an $e$-atomic model, then the isolated types in $S_n^\exists(T)$ are logically dense for all $n\geq 1$.
\end{lemma}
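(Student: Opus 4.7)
The plan is to show that every nonempty logically open set in $S_n^\exists(T)$ contains an isolated type, using realizations supplied by the given $e$-atomic model $A$. Since sets of the form $[\varphi<\epsilon]$ with $\varphi$ existential and $\epsilon>0$ form a basis for the logic topology, it suffices to produce an isolated type inside an arbitrary nonempty such set.

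So fix a nonempty basic open $U=[\varphi<\epsilon]$ and choose some $\pi=\etp^B(b)\in U$, where $B$ is an e.c.\ model of $T$ with $\varphi^B(b)<\epsilon$; in particular $\inf_x\varphi^B(x)<\epsilon$. I would next transfer this inequality to $A$. Since $A$ is $e$-atomic, $A$ is itself an e.c.\ model of $T$ (as noted just after the definition of $e$-atomic). Using JEP, $A$ and $B$ embed into a common model of $T$, which in turn embeds into an e.c.\ model $D\models T$; since both $A\hookrightarrow D$ and $B\hookrightarrow D$ are existential embeddings, the value of the existential sentence $\inf_x\varphi(x)$ agrees in $A$, $B$, and $D$. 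In particular $\inf_x\varphi^A(x)=\inf_x\varphi^B(x)<\epsilon$, so there is $a\in A$ with $\varphi^A(a)<\epsilon$.

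Then $\etp^A(a)\in U$, and because $A$ is $e$-atomic, $\etp^A(a)$ is an isolated element of $S_n^\exists(T)$. This furnishes an isolated type in $U$ and completes the density argument. The only step with any real content is the transfer of the inequality $\inf_x\varphi<\epsilon$ from $B$ to $A$, and that is a routine joint-embedding argument that crucially uses the standing JEP hypothesis of this subsection together with the fact (already used several times in the paper) that existential formulas are preserved with equality under existential embeddings.
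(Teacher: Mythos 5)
Your proof is correct and follows essentially the same route as the paper: realize the basic open set $[\varphi<\epsilon]$ in some e.c.\ model, transfer $\inf_x\varphi<\epsilon$ to the $e$-atomic model $A$ via JEP (using that $A$ is e.c.\ so existential values pull back), and note that any witness in $A$ realizes an isolated type lying in $[\varphi<\epsilon]$. You merely spell out the paper's terse ``By JEP'' step in more detail, which is fine.
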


\begin{proof}
Let $A$ be an $e$-atomic model of $T$.  Fix a non-empty logically open set $[\varphi<\epsilon]$.  Then there is an e.c.\ model $B\models T$ such that $(\inf_x\varphi(x))^B<\epsilon$.  By JEP, $(\inf_x \varphi(x))^A<\epsilon$.  If $a\in A$ is such that $\varphi^A(a)<\epsilon$, then the isolated type $\etp^A(a)$ belongs to $[\varphi<\epsilon]$.
\end{proof}

What is more important is that the converse holds.  In fact:

\begin{lemma}\label{enforceatomic}
Suppose that the isolated types in $S_n^\exists(T)$ are logically dense for all $n\geq 1$.  Then the property that the compiled structure is $e$-atomic is enforceable.
\end{lemma}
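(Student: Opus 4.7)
By the Conjunction Lemma and the already-established enforceability of the conjunction ``the compiled structure is extra canonical and an e.c.\ model of $T_\forall$,'' it suffices, for each $n\geq 1$, each $n$-tuple $c$ of distinct constants, and each $m\geq 1$, to enforce that $\etp^{A}(c^{A})$ lies within distance $1/m$ of some isolated type in $S_n^\exists(T)$.  Since the isolated types form a metrically closed set and the map $a\mapsto \etp^{A}(a)$ from $A^n$ to $S_n^\exists(T)$ is $1$-Lipschitz (directly from the definition of the topometric metric), these conjuncts force every tuple of constants to realize an isolated type; density of the constants in $A$ then propagates this to every $n$-tuple of $A$, so the compiled $A$ is $e$-atomic.

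Fix such $c,m$ and suppose $\forall$ opens with $p_{0}=\{\varphi_{i}(c,d)<r_{i}\}_{i\leq k}$.  Pick $A^{+}\models T\cup p_{0}$ with $A$ an e.c.\ model of $T_\forall$ (extend if necessary), so $\pi_{0}:=\etp^{A}(c^{A^{+}})\in S_n^\exists(T)$.  Choose $\delta>0$ strictly smaller than the slack in $p_{0}$; then the existential formula
$$\theta(x)\;:=\;\inf_{y}\max_{i}\bigl(\varphi_{i}(x,y)\dotminus(r_{i}-2\delta)\bigr)$$
satisfies $\theta^{A}(c^{A^{+}})<\delta$, and any $b\in B\models T$ with $\theta^{B}(b)<\delta$ admits $e\in B$ with $\varphi_{i}(b,e)<r_{i}$ for every $i$.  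Since $[\theta<\delta]$ is a nonempty logically open subset of $S_n^\exists(T)$, the density hypothesis produces an isolated $\pi\in[\theta<\delta]$, realized by some $b$ in an e.c.\ $B\models T$ with corresponding $e\in B$.

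Applying Proposition \ref{defset} to $\pi$ and $1/m$, fix a basic logically open neighborhood $[\theta'(x)<\delta']$ of $\pi$, shrunk inside $[\theta<\delta]$, such that every e.c.\ model with an element of type in $[\theta'<\delta']$ admits a realization of $\pi$ within $1/m$ of that element.  Writing $\theta'(x)=\inf_{y}\rho(x,y)$, pick $e'\in B$ with $\rho(b,e')<\delta'$.  After approximating $\rho$ by a restricted quantifier-free formula (adjusting $\delta'$ by a controlled amount, as permitted by the density of restricted formulae and Lemma \ref{forceapprox}), let $\exists$ play $p_{1}:=p_{0}\cup\{\rho(c,d')<\delta'\}$ for a fresh tuple $d'$ of constants; $p_{1}$ is satisfied in $B$ via $c\mapsto b$, $d\mapsto e$, $d'\mapsto e'$, hence is a genuine condition extending $p_{0}$.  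Thereafter $\exists$ plays according to the strategies forcing extra canonicity and e.c.\ness.  In the resulting compiled structure $A$, we have $\rho^{A}(c^{A},(d')^{A})<\delta'$, so $\theta'^{A}(c^{A})<\delta'$ and $\etp^{A}(c^{A})\in[\theta'<\delta']$; Proposition \ref{defset} then produces $c^{*}\in A$ with $\etp^{A}(c^{*})=\pi$ and $d(c^{A},c^{*})<1/m$, giving $d(\etp^{A}(c^{A}),\pi)<1/m$ in $S_n^\exists(T)$.

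The main technical obstacle is the routine but delicate restricted-formula bookkeeping: one must calibrate $\delta$ and $\delta'$ so that $[\theta<\delta]$ remains compatible with $p_0$, so that the shrinking $[\theta'<\delta']\subseteq[\theta<\delta]$ preserves the close-realization property from Proposition \ref{defset}, and so that replacing the (potentially non-restricted) quantifier-free $\rho$ by a restricted approximation leaves $p_{1}$ both satisfiable and strong enough to land $\etp^{A}(c^{A})$ inside $[\theta'<\delta']$.  These adjustments are of the same flavor as those used in the proofs of Proposition \ref{enforceuniversal} and Theorem \ref{primeimpliesenforceable}, and pose no serious conceptual difficulty once the choice of $\pi$ via density and of the neighborhood via Proposition \ref{defset} is in place.
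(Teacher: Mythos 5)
Your proof is correct and follows essentially the same route as the paper's: reduce via the Conjunction Lemma and metric closedness of the isolated types to forcing each constant tuple's type within $1/m$ of an isolated type, shrink the slack in $\forall$'s opening condition to get a nonempty logically open set compatible with $p_0$, pick an isolated type there by density, and play a condition forcing $\etp^A(c)$ into a small logically open neighborhood of it. The only (harmless) difference is that you invoke Proposition \ref{defset} to realize $\pi$ inside the compiled structure near $c^A$, where the paper gets $d(\etp^A(c),\pi)<\delta$ directly from the definition of isolation via a neighborhood $[\theta<\eta]\subseteq B(\pi,\delta)$.
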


\begin{proof}
By the conjunction lemma and the fact that isolated elements of $S_n^\exists(T)$ are metrically closed, it is enough to show that, for any $\delta>0$ and any tuple $c$ of distinct witnesses, that we can enforce that $\etp^A(c)$ is maximal and within $\delta$ of an isolated type.  Suppose that $\forall$ opens with $p_0=\{\psi_i(c,d)<\epsilon_i \ : \ i=1,\ldots,k\}$, where $c$ and $d$ are disjoint tuples of constants.  Fix $\epsilon>0$ and $\delta_i$ such that $\delta_i+\epsilon<\epsilon_i$.  By assumption, there is an isolated maximal existential type $\pi(x)$ contained in $[\inf_y \max_i(\psi_i(x,y)\dotminus \delta_i)<\epsilon]$.  Suppose that $[\theta(x)<\eta]$ is a neighborhood of $\pi$ contained in $B(\pi,\delta)$.  Since $T\cup p_0\cup \{\theta(x)<\eta\}$ is consistent, by Lemma \ref{existentialforcing},  $\exists$ can play $p_1\supseteq p_0$ such that $p_1\Vdash^g \theta(c)<\eta$.  It follows that in the compiled structure we can force that $A$ is e.c. and $d(\etp^A(c),\pi)<\delta$.  
\end{proof}

Combining Proposition \ref{uniqueatomic} with Lemmas \ref{atomicdense} and \ref{enforceatomic}, we obtain:

\begin{cor}\label{denseenforceable}
Suppose that the isolated points in $S_n^\exists(T)$ are dense for all $n\geq 1$.  Then $T$ has an enforceable model.  
\end{cor}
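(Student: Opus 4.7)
The plan is to assemble the corollary directly from the lemmas just proved. By Lemma \ref{enforceatomic}, the density hypothesis gives that the property ``the compiled structure is $e$-atomic'' is enforceable. Combining this (via the Conjunction Lemma) with the already established enforceability of ``the compiled structure is (extra) canonical'' and ``the compiled structure is an e.c.\ model of $T_\forall$,'' we get a single enforceable conjunction guaranteeing that the compiled structure $A$ is a separable, canonical, $e$-atomic structure. Note that separability is automatic because $A$ is built from the countable constant set $C$ whose interpretations are dense, and that $A$ is actually a model of $T$: being $e$-atomic entails being e.c.\ (since isolated existential types are maximal), and e.c.\ models of $T_\forall$ are models of $T$ whenever $T$ is $\forall\exists$-axiomatizable.

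Next, I would invoke Proposition \ref{uniqueatomic}: any two separable $e$-atomic models of $T$ are isomorphic. So fix a separable $e$-atomic model $\mathcal{A}$ of $T$, which exists because $\exists$ can actually play the strategy witnessing enforceability of $e$-atomicity (against, say, $\forall$'s trivial play), producing at least one such compiled structure. Then every play in which $\exists$ follows the strategy from the first paragraph yields a separable $e$-atomic model of $T$, which must therefore be isomorphic to $\mathcal{A}$. This means precisely that the property ``the reduct of the compiled structure is isomorphic to $\mathcal{A}$'' is enforceable, so $\mathcal{A}$ is an enforceable model of $T$.

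There is essentially no obstacle here beyond the preceding work: the existence side is handled by Lemma \ref{enforceatomic} (the genuinely substantive step) and the uniqueness side by Proposition \ref{uniqueatomic}. The one conceptual point worth flagging is the passage from ``enforcing a \emph{property}'' to ``enforcing an \emph{isomorphism type}''\,---\,this works only because separable $e$-atomic models are categorical (by Proposition \ref{uniqueatomic}), so that the enforceable property $e$-atomicity has, up to isomorphism, a unique separable model realizing it.
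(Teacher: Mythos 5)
Your proposal is correct and follows essentially the same route as the paper, which obtains the corollary by combining Lemma \ref{enforceatomic} (density of isolated types makes $e$-atomicity enforceable) with Proposition \ref{uniqueatomic} (uniqueness of the separable $e$-atomic model), exactly the two ingredients you use. Your additional remarks on existence of a separable $e$-atomic model and on passing from an enforceable property to an enforceable isomorphism type are just the details the paper leaves implicit.
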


\begin{cor}
Suppose that $T$ has an $e$-atomic model $A$.  Then $A$ is the enforceable model of $T$.
\end{cor}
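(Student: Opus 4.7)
The plan is to combine three pieces of machinery that are already in place in the excerpt: the density criterion for isolated existential types, the enforceability of $e$-atomicity under that density, and the uniqueness of separable $e$-atomic models. Essentially, every hypothesis has already been packaged into a lemma, so my job is just to chain them together and verify that the compiled structure falls into the class where the uniqueness result applies.

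More concretely, first I would invoke Lemma \ref{atomicdense}: since by assumption $T$ has an $e$-atomic model (namely $A$), the isolated points of $S_n^\exists(T)$ are logically dense for every $n\geq 1$. Next, I would feed this density into Lemma \ref{enforceatomic} to conclude that the property ``the compiled structure is $e$-atomic'' is enforceable. At the same time, Proposition \ref{enforceuniversal} ensures that being a model of $T_\forall$ is enforceable, and the standing lemma on canonicity guarantees that the compiled structure is (extra) canonical, hence separable. By the Conjunction Lemma (Lemma \ref{properties}(4)), it is enforceable that the compiled structure is a separable, $e$-atomic model of $T_\forall$ — and since $e$-atomic structures are e.c.\ models of $T$, in particular they are models of $T$ itself.

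Finally, I would apply Proposition \ref{uniqueatomic}: any two separable $e$-atomic models of $T$ are isomorphic (recall that $T$ has JEP as a standing hypothesis of this subsection, which Proposition \ref{uniqueatomic} needs). Therefore the compiled structure is forced to be isomorphic to $A$, which is exactly the statement that $A$ is enforceable.

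I do not expect any real obstacle: the proof is essentially a one-line concatenation of Lemma \ref{atomicdense}, Lemma \ref{enforceatomic}, and Proposition \ref{uniqueatomic}, together with the standing conventions that the compiled structure is canonical and separable. The only bookkeeping issue is to remember that ``$e$-atomic'' already entails ``e.c.\ model of $T$'' (as noted immediately after the definition of $e$-atomic), so one need not separately enforce being a model of $T$ beyond $T_\forall$.
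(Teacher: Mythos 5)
Your proof is correct and is precisely the argument the paper intends: the corollary is stated as an immediate consequence of Lemma \ref{atomicdense}, Lemma \ref{enforceatomic}, and Proposition \ref{uniqueatomic} (the same combination used for Corollary \ref{denseenforceable}), with the same bookkeeping that the compiled structure is separable and, being $e$-atomic hence e.c.\ for $T_\forall$, a model of the $\forall\exists$-theory $T$.
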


\subsection{Games with many boards}

Once again, we assume that $T$ is an $\forall\exists$-axiomatizable theory with JEP.

In the proof of the dichotomy theorem, it is important to extend our game to the setting where we have ``many boards.''  More concretely, let us first consider the game with two boards, which is played exactly as before, except each player plays two conditions $p_{i+1}^1,p_{i+1}^2$ extending the previous players conditions $p_i^1,p_i^2$.  It is important to note that the two boards are independent of one another.  At the end, providing both players played definitive sequences, the players will have compiled two structures, say $A_1^+$ and $A_2^+$ with reducts $A_1$ and $A_2$.  Given a property $R$ of pairs of structures, we say that $R$ is enforceable if $\exists$ has a winning strategy that ensures that the pair of compiled structures has property $R$.

The following lemma is obvious but worth pointing out.

\begin{lemma}

\

\begin{enumerate}
\item If $P$ and $Q$ are enforceable properties of structures, then it is enforceable that the compiled pair $(A_1^+,A_2^+)$ is such that $A_1^+$ has $P$ and $A_2^+$ has $Q$.
\item If $(R_i)$ is a family of countably many enforceable properties of pairs of structures, then the conjunction of the $R_i$'s is also enforceable.
\end{enumerate}
\end{lemma}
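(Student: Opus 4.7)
The plan is to mimic, in the two-board setting, the standard one-board arguments: (1) exploits the independence of the boards, while (2) is essentially the Conjunction Lemma (Lemma \ref{properties}(4)) proved by a diagonal interleaving. Neither step introduces a genuinely new difficulty.

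For (1), let $\tau_P$ and $\tau_Q$ be the one-board strategies witnessing that $P$ and $Q$ are enforceable. The plan is to have $\exists$ play the two boards \emph{separately}: at each of her turns she computes her move on board $1$ by feeding $\tau_P$ the projection of the current history onto board $1$, and similarly on board $2$ using $\tau_Q$. Since a move on board $i$ imposes no constraint on board $3-i$, the resulting board-$1$ chain is a legal outcome of a one-board game in which $\exists$ follows $\tau_P$ (treating $\forall$'s board-$1$ moves as arbitrary), and likewise for board $2$. Hence each board's play is definitive and the respective compiled structure has $P$ or $Q$.

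For (2), fix winning strategies $\tau_i$ for $R_i$ and a function $\sigma\colon \omega\to\omega$ whose fibres are all infinite. I will define $\exists$'s strategy $\tau$ so that at her $n$-th turn, $\tau$ plays the move that $\tau_{\sigma(n)}$ would dictate, not against the actual history, but against a virtual history $G_i$ (with $i=\sigma(n)$) defined as follows: take the subsequence of turns $n_0<n_1<\ldots$ with $\sigma(n_k)=i$, and let $G_i$ record, as $\forall$'s $k$-th move, the condition sitting immediately before $\exists$'s $n_k$-th turn in the actual game, and, as $\exists$'s $k$-th move, the condition $\exists$ actually plays at turn $n_k$. By construction $G_i$ is a legal play of the (two-board) game in which $\exists$ follows $\tau_i$, so $G_i$ is definitive and its compiled pair has property $R_i$.

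The step I expect to require the most care, though still routine, is checking that the compiled pair of the actual play coincides with that of $G_i$ for every $i$, which is what lets us transfer each $R_i$ from the virtual game to the real one. This is precisely where the infiniteness of $\sigma^{-1}(i)$ is used: the chain of conditions recorded in $G_i$ is a subchain of the real chain, and being cofinal in $\omega$ forces the two chains to have the same union, hence the same $L(C)$-pre-structure and the same definitiveness status. Since $R_i$ depends only on the compiled pair, we conclude that the actual compiled pair has $R_i$ for every $i$, and $\tau$ therefore enforces the conjunction.
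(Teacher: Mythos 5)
Your proof is correct and is exactly the standard argument the paper has in mind: the paper states this lemma without proof (calling it obvious), and the intended justification is precisely your board-by-board use of the two strategies for (1) and the usual interleaving/virtual-history trick behind the one-board Conjunction Lemma for (2). Nothing further is needed.
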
 

For us, the main proposition about the two-board game is the following:

\begin{prop}\label{simultaneous}
It is enforceable that the only maximal existential types realized in both $A_1$ and $A_2$ are $e$-isolated.
\end{prop}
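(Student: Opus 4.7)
My plan is to devise a strategy that at each stage attempts to drive the existential types of $c_1^{A_1}$ and $c_2^{A_2}$ apart, and to argue that the only way these types can coincide in the compiled structures is if the common type is forced to be isolated. I begin with two standard reductions. The obvious two-board analog of the proposition that e.c.-ness of $T_\forall$ is enforceable (whose proof is an exact adaptation of the single-board case) allows $\exists$ to simultaneously enforce that both $A_1$ and $A_2$ are e.c.\ models of $T_\forall$, so that the existential types they realize are automatically maximal. By the two-board Conjunction Lemma, it then suffices to fix a single pair $(c_1, c_2)$ of equal-length constant tuples from $C_1, C_2$ respectively and enforce the property $P_{c_1,c_2}$: whenever $\etp^{A_1}(c_1^{A_1}) = \etp^{A_2}(c_2^{A_2})$, this common type is isolated in $S_n^\exists(T)$.

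Fix such a pair $(c_1, c_2)$ and enumerate, with infinite repetition, all triples $(\varphi, r, s)$ where $\varphi(x) = \inf_y \psi(x, y)$ is an existential $L$-formula of arity $n$ with $\psi$ quantifier-free and $0 < r < s$ are rationals. At stage $k$, with position $(p^1, p^2)$, $\exists$ examines the separation test for $(\varphi_k, r_k, s_k)$: is there an extension $(q^1, q^2) \supseteq (p^1, p^2)$ such that both $T \cup q^1 \cup \{\varphi_k(c_1) < r_k\}$ and $T \cup q^2 \cup \{\varphi_k(c_2) \geq s_k\}$ are satisfiable (or the symmetric variant)? If yes, then by Lemma \ref{existentialforcing} $\exists$ extends board 1 to force $\varphi_k(c_1) < r_k$; on board 2, $\exists$ incrementally adds open conditions of the form $\psi_k(c_2, d) > s_k - \eta$ for each $d \in C_2$ and each rational $\eta > 0$, spread over the infinitely many future repetitions of the triple in the enumeration, so that canonicity of the compiled $A_2$ forces $\varphi_k^{A_2}(c_2) \geq s_k$ in the limit. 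This yields $\varphi_k^{A_1}(c_1) \leq r_k < s_k \leq \varphi_k^{A_2}(c_2)$, separating the types. Otherwise, $\exists$ extends trivially.

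Suppose the play ends with $\etp^{A_1}(c_1^{A_1}) = \etp^{A_2}(c_2^{A_2}) = \pi$. Then the separation test failed at every stage and every triple. I claim $\pi$ is isolated. Arguing by contrapositive: if $\pi$ were not isolated, there would exist $\epsilon > 0$ such that every basic logic-open neighborhood of $\pi$ contains some $\rho$ with $d(\rho, \pi) > \epsilon$. The topometric structure of $S_n^\exists(T)$, specifically the compactness argument underlying the proof of Hausdorff-ness, upgrades the metric inequality $d(\rho, \pi) > \epsilon$ to existential formulae $\chi_1 \in \pi$ and $\chi_2 \in \rho$ that separate $\pi$ from $\rho$ quantitatively, giving a concrete triple $(\varphi, r, s)$. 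Using JEP to realize $\rho$ and $\pi$ in a common model, together with the compatibility of the compiled positions $p^1_\infty$ and $p^2_\infty$ with both $\pi$ and with nearby types in $S_n^\exists(T)$, one constructs an extension of $p^1_\infty$ that realizes $\rho$ at $c_1$ while $p^2_\infty$ remains compatible with $\pi$ at $c_2$; this exhibits a separating configuration for the test at $(\varphi, r, s)$, contradicting its failure.

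The principal technical obstacle is the final step: converting non-isolation of $\pi$ into a concrete separating triple matched to the compiled positions on the two boards. This will require careful use of $d$-lower semi-continuity and the topometric arguments from the section to extract quantitative separation from the bare metric inequality $d(\pi, \rho) > \epsilon$, together with JEP and saturation to realize both the alternative type $\rho$ on one board and the persistence of $\pi$ on the other. A secondary technical point is verifying that the commitment $\varphi_k^{A_2}(c_2) \geq s_k$ on board 2, which must be spread across infinitely many stages, cannot be disrupted by $\forall$'s moves — this rests on the observation that the satisfiability witnessing the positive answer to the separation test can be preserved throughout the play, since $\forall$'s moves are constrained to extend the current commitments.
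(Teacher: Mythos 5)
There is a genuine gap, and it sits exactly at the point you flag as a ``secondary technical point'': the board-2 half of your separation step is not achievable in this forcing framework. Open conditions (and Lemma \ref{existentialforcing}) let $\exists$ force \emph{existential/open} facts such as $\varphi(c_1)<r$, but the commitment $\varphi(c_2)\geq s$ with $\varphi=\inf_y\psi$ is a closed, universal statement about the compiled structure. The fact that $T\cup q^2\cup\{\varphi(c_2)\geq s\}$ is satisfiable does not mean $q^2$ (or any finite extension) \emph{forces} $\varphi(c_2)\geq s$: at any later stage $\forall$ may introduce a fresh constant $d'$ and play the condition $\psi(c_2,d')<r$, which is consistent with the finite position unless that position already implies a lower bound on $\inf_y\psi(c_2,y)$ --- and mere satisfiability never guarantees this. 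Your plan of spreading the lower bounds $\neg\psi(c_2,d)<1-s+\eta$ over infinitely many stages only covers the constants already in play; $\forall$'s sabotage uses new ones. Once a ``passing'' test can fail to produce actual separation, the whole architecture ``types equal at the end $\Rightarrow$ every test failed $\Rightarrow$ $\pi$ isolated'' collapses; moreover the final contrapositive step (turning non-isolation, which is a \emph{metric} statement about $B(\pi,\epsilon)$ having empty logic-interior, into a single formula-value separation matched to the positions) is precisely where the content lies, and you leave it as an acknowledged obstacle.

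The paper's proof avoids forcing any closed fact. For a fixed $\delta$ and fixed tuples, $\exists$ looks at the logic-open set $O$ of types consistent with $\forall$'s opening position on board 1 and distinguishes two cases. If $O$ is contained in a ball $B(\rho,\delta/2)$, nothing needs to be done: should the two compiled types coincide with some $\pi$, then $\pi\in O\subseteq B(\pi,\delta)$, which is exactly the (weak) isolation requirement at scale $\delta$. Otherwise $O$ contains two types $\delta/2$ apart, so one of them is $\delta/4$-far from some type consistent with board 2's position; lower semicontinuity of $d$ on $S_n^\exists(T)$ then yields logic-open sets $[\chi<\eta]$ and $[\theta<\zeta]$ at metric distance $\geq\delta/8$, and $\exists$ forces (via Lemma \ref{existentialforcing}, legitimately, since these are open commitments that $\forall$ cannot undo) the type of $c$ on board 1 into the first and the type of $d$ on board 2 into the second, so the two types simply never coincide. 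Reworking your argument along these lines --- separating types by pushing them into metrically distant \emph{open} sets rather than by pinning an existential formula from below on one board --- is what is needed.
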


\begin{proof}
Fix $\delta>0$ and tuples of distinct constants $c$ and $d$.  By the conjunction lemma, it suffices to show that if $\etp^{A_1}(c)=\etp^{A_2}(d)$, then the $\delta$ ball around this common existential type $\pi$ contains a logically open set.  

Suppose that $\forall$ starts by playing $p_0^1$ and $p_0^2$.  Write $$p_0^1=\{\psi_{i1}(c,b_1)<\epsilon_{i1} \ : \ i=1,\ldots,k\}$$ and $$p_0^2=\{\psi_{i2}(c,b_2)<\epsilon_{i2} \ : \ i=1,\ldots,k\}.$$

First suppose that $O:=\bigcap_{i=1}^k [\inf_{y}\psi_{i1}(x,y)<\epsilon_{i1}]\subseteq B(\rho;\frac{\delta}{2})$ for some $\rho\in O$.  It follows that if the game ends with $\etp^{A_1}(c)=\etp^{A_2}(d)=\pi$, then $\pi\in O$, whence $d(\pi,\rho)<\frac{\delta}{2}$, and $O\subseteq B(\pi,\delta)$.

If the first case does not apply, then there are $\rho_1,\rho_2\in O$ such that $d(\rho_1,\rho_2)\geq \frac{\delta}{2}$.  Take $\sigma\in \bigcap_{i=1}^k [\inf_{y}\psi_{i2}(x,y)<\epsilon_{i2}]$.  Without loss of generality, we then have that $d(\rho_1,\sigma)\geq \frac{\delta}{4}$.  Since $S_n^\exists(T)$ is a topometric space, there are logically open sets $[\chi<\eta]$ and $[\theta<\zeta]$ containing $\rho_1$ and $\sigma$ respectively such that $d([\chi<\eta],[\theta<\zeta])\geq \frac{\delta}{8}$.  By Lemma \ref{existentialforcing}, $\exists$ may respond by playing $p_1^1\supseteq p_0^1$ and $p_1^2\supseteq p_0^2$ such that $p_1^1\Vdash^g \chi( c)<\eta$ and $p_1^2\Vdash^g \theta( d)<\eta$.  It is clear then that in the compiled structures, $d(\etp^{A_1}(c),\etp^{A_2}(d))\geq \frac{\delta}{8}$.      
\end{proof}

\begin{cor}\label{embedsimpliesatomic}
Suppose that $T$ is $\forall\exists$-axiomatizable and $A$ is an e.c. model of $T$ that embeds into all e.c. models of $T$.  Then $A$ is $e$-atomic and hence enforceable.
\end{cor}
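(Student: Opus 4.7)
The plan is to show that $A$ is $e$-atomic under the stated hypotheses; the conclusion that $A$ is enforceable is then the immediately preceding corollary. The key device will be the two-board game of Proposition \ref{simultaneous}.

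Before invoking that machinery I would first verify that $T$ has JEP, since Section 6 (and in particular Proposition \ref{simultaneous}, via the topometric structure on $S_n^\exists(T)$) requires it. Given $M_1,M_2\models T$, extend each to an e.c.\ model $B_i\models T$; by hypothesis, $A$ embeds into each $B_i$. Identifying $A$ with its image, $A$ is e.c.\ in $B_i$ by the definition of e.c.\ model, so the semantic characterization of existential closure gives an embedding $B_i\hookrightarrow A^{\u}$ extending the diagonal map on $A$. Since $A^{\u}\models T$, both $M_1$ and $M_2$ jointly embed there, establishing JEP.

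Next I would run the two-board game. By enforceability of being an e.c.\ model of $T$ on each board, Proposition \ref{simultaneous}, and the two-board conjunction lemma, player $\exists$ has a strategy ensuring simultaneously that both compiled structures $A_1,A_2$ are e.c.\ models of $T$ \emph{and} that every maximal existential type realized in both $A_1$ and $A_2$ is isolated. Fix any such winning play. For each tuple $a$ from $A$, let $f_i\colon A\hookrightarrow A_i$ be an embedding provided by the hypothesis. Since $A$ is e.c.\ in $T$ and each $A_i\models T$, the embedding $f_i$ preserves existential types, so
$$\etp^{A_1}(f_1(a))=\etp^{A}(a)=\etp^{A_2}(f_2(a)).$$
This common type lies in $S_n^\exists(T)$ (being realized in the e.c.\ model $A$) and is witnessed on both boards, hence is isolated by our choice of play. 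Therefore every existential type realized in $A$ is isolated, i.e., $A$ is $e$-atomic, and the previous corollary delivers that $A$ is the enforceable model of $T$.

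The only real cleverness is recognizing that the two-board game is the right vehicle: once one decides to compile two independent e.c.\ extensions and embed $A$ into each, the fact that e.c.\ embeddings preserve existential types combined with Proposition \ref{simultaneous} does all the work. The one subtlety that is easy to overlook is that Proposition \ref{simultaneous} carries JEP as a standing hypothesis, so one must first extract JEP from the stated assumptions, as above.
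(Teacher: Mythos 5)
Your argument is correct and is exactly the derivation the paper intends (the corollary is stated without proof as an immediate consequence of Proposition \ref{simultaneous}): play the two-board game to compile e.c.\ models $A_1,A_2$ in which every common maximal existential type is isolated, embed $A$ into both, observe that these embeddings preserve the (maximal, since $A$ is e.c.) existential types, and conclude $e$-atomicity and then enforceability from the preceding corollary on $e$-atomic models. Your preliminary extraction of JEP from the hypotheses is a nice extra touch; the paper instead takes JEP as a standing assumption in this subsection, so it is not needed but certainly does no harm.
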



\begin{rmk}
The previous corollary gives an alternative proof of the fact that ssa algebras are enforceable models of their universal theories.
\end{rmk}

The other game that we will need is the following ``splitting game.''  While we will not present the most general version of the game, this is the only version that we will need in the proof of the dichotomy theorem.  In this game, $\forall$ starts by playing a condition $p_0^\emptyset$ and $\exists$ responds by playing $p_1^\emptyset\supseteq p_0^\emptyset$.  Now, $\forall$ responds with two extensions $p_2^{0},p_2^{1}\supseteq p_1^\emptyset$ and $\exists$ responds with single extensions $p_3^0\supseteq p_2^0$ and $p_3^1\supseteq p_2^1$.  More generally, for every $s\in 2^{<\omega}$, assume $\forall$ has played conditions $p_i^s$.  $\exists$ then responds with $p_{i+1}^s\supseteq p_i^s$ and then $\forall$ responds with two extensions $p_{i+2}^{s0},p_{i+2}^{s1}\supseteq p_{i+1}^s$.

At the end of a play, we have a tree of plays, where nodes at even levels have precisely one extension while nodes at odd levels have precisely two extensions.  Provided each infinite path through the tree is a definitive play of the original one-board game, we have a family $(A_\alpha^+ \ : \ \alpha\in 2^\omega)$ of continuum many compiled structures.  Given a property $R$ of families of structures indexed by $2^\omega$, we hope it is clear to the reader how to make sense of the statement that $R$ is an enforceable property.  

The main fact that we will need about the splitting game is the following.  Its proof is not difficult (just a notational mess) and is exactly the same as its classical counterpart (see \cite[Theorem 4.1.5]{hodges}) so we omit the proof.

\begin{prop}\label{splitting}
Let $R$ be an enforceable property of pairs of structures.  Let $P$ be the property of families $(B_\alpha^+ \ : \ \alpha\in 2^\omega)$ of structures that states that $(B_\alpha^+,B_\beta^+)$ has property $R$ whenever $\alpha\not=\beta$.  Then $P$ is an enforceable property.
\end{prop}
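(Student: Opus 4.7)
Proof proposal: The plan is to lift the winning strategy $\tau$ for $\exists$ in the two-board game for $R$ to a winning strategy $\sigma$ for $\exists$ in the splitting game for $P$. The conceptual key is that for any two distinct $\alpha,\beta\in 2^\omega$, their longest common prefix $s=\alpha\wedge\beta$ is a splitting node at which the subtrees containing $\alpha$ and $\beta$ separate; after this split, the sequences of conditions along $\alpha$ and along $\beta$ are built from independent moves by $\forall$ (in the subtrees at $s0$ and $s1$ respectively), so they form a legitimate two-board play. If $\sigma$ arranges that along these two paths the resulting play is $\tau$-conforming, then $R(A_\alpha^+,A_\beta^+)$ follows.

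First I would fix an enumeration $(s_n:n<\omega)$ of the splitting nodes of $2^{<\omega}$ together with a bookkeeping surjection $h:\omega\to\omega$ that takes each value infinitely often. At $\exists$'s $k$-th turn, $\sigma$ will advance the simulated two-board game associated with the pair of subtrees rooted at $(s_{h(k)}0,s_{h(k)}1)$ by one round, feeding $\tau$ the conditions that $\forall$ has played so far at certain fixed reference nodes in these subtrees and recording $\tau$'s output as $\exists$'s responses at the corresponding nodes of the splitting tree. At all other $\exists$-positions, $\sigma$ plays trivially (e.g.\ repeating $\forall$'s last condition), which is always allowed since conditions only need to extend the previous move.

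The main obstacle is consistency: a single node $u$ of the splitting tree may lie on the "first board'' of several simulated two-board games (one for each splitting node $s_n$ with $s_n 0\subseteq u$), so the prescriptions coming from the various applications of $\tau$ must be made compatible. This is handled by scheduling the bookkeeping so that, before $\sigma$ is ever asked to play at a node $u$, the simulations associated to splitting nodes $s_n$ strictly deeper than the parent of $u$ have not yet been initiated; then $\sigma$'s move at $u$ is the first prescription, and later simulations are forced merely to extend it. This is a pure notational/scheduling argument, identical in spirit to \cite[Theorem 4.1.5]{hodges}, which is why the author omits it.

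Finally, I would verify that for every distinct $\alpha,\beta\in 2^\omega$ with $s=\alpha\wedge\beta$, the pair of plays produced along $\alpha$ and $\beta$ starting from the split at $s$ is $\tau$-conforming: because $h$ visits the index of $s$ infinitely often and $\tau$ is a winning strategy, the resulting two-board compiled pair $(A_\alpha^+,A_\beta^+)$ has property $R$. Since this holds for all such pairs simultaneously, the compiled family has property $P$, and the strategy $\sigma$ is winning.
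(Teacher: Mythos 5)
There is a genuine gap, and it sits exactly where you declare the argument to be ``a pure notational/scheduling argument.'' In the splitting game, $\exists$'s move at a node $u$ is a single condition that is shared by the continuum many branches through $u$, whereas the two-board winning strategy $\tau$ prescribes its board-1 response as a function of the history on \emph{both} boards. For a fixed $s$, the pairs $(\alpha,\beta)$ with $\alpha\wedge\beta=s$ have pairwise different board-2 histories (the chains along the various $\beta\supseteq s1$), and $\tau$'s prescriptions against different board-2 histories are in general distinct, indeed need not even be jointly satisfiable; so no single move at a node $u\supseteq s0$ can conform to $\tau$ for all of these pairs simultaneously. What your bookkeeping actually maintains is \emph{one} simulated two-board play per splitting node $s$ (its boards being the chains at your ``fixed reference nodes''), and following $\tau$ there certifies $R$ only for the single pair of compiled structures of that simulation --- at best one designated pair per $s$, hence countably many pairs in total, not all pairs $(A_\alpha^+,A_\beta^+)$ with $\alpha\wedge\beta=s$. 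Consequently the verification in your last paragraph is unjustified: for a typical pair of branches, $\exists$'s actual moves along $\alpha$ and $\beta$ are either ``trivial'' moves or $\tau$-responses computed against a different board-2 history, so the play along $(\alpha,\beta)$ is not $\tau$-conforming even after absorbing extraneous moves into $\forall$'s moves. (Your scheduling remark also targets the wrong conflict: the simulations competing for the move at $u$ are those attached to the ancestors of $u$, which have already been initiated.)

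A correct proof cannot proceed by making every pair of branches conform to a fixed strategy; it must convert enforceability of $R$ into requirements that are \emph{finitely secured and persistent}: countably many demands on pairs of conditions that are dense (any pair of conditions extends to meet them) and open (once a pair of conditions meeting them occurs in the two chains, they remain met under all further extensions). This is automatic for the $R$ actually used later (Proposition \ref{simultaneous} secures each of its countably many clauses by a single pair of moves), and in general it is the nontrivial translation of ``winning strategy'' into ``comeager family'' that the classical argument exploits. With such requirements one runs a fusion: when the bookkeeping attends to the sibling pair $(s0,s1)$ and the $n$-th requirement, $\exists$ extends the conditions at the finitely many current nodes above $s0$ and $s1$, handling the finitely many node-pairs one at a time (density makes each step possible, openness keeps earlier steps intact), which secures the requirement for \emph{all} branches splitting at $s$ at once. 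Your proposal never confronts this reduction, and without it the lifted strategy you describe does not win the game $G(P)$.
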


\subsection{Proof of the dichotomy theorem}

We now have all the ingredients needed to prove the dichotomy theorem.  If the $e$-isolated types are dense for all $n\geq 1$, then we know that we have an enforceable model by Corollary \ref{denseenforceable}.  So assume now that the $e$-isolated types are not dense and fix an enforceable property $P$.  Take a basic logically open set $[\varphi<\epsilon]$ that contains no $e$-isolated type.  

We play the splitting game from the previous section.  Let $p_0^\emptyset$ be a condition such that $p_0^{\emptyset}\Vdash^g \varphi(c)<\epsilon$ and then have $\forall$ play future stages any way they want.  We obtain models $(A_\alpha^+ \ : \ \alpha\in 2^\omega)$ with $a_\alpha:=c^{A_\alpha^+}$.  By Propositions \ref{simultaneous} and \ref{splitting}, $\exists$ can enforce that each $A_\alpha^+$ is an e.c. model of $T$ with property $P$ such that $\varphi(a_\alpha)^{A_\alpha}<\epsilon$ and that the only types realized in distinct $A_\alpha$'s are $e$-isolated.  It remains to show that $A_\alpha$ and $A_\beta$ are not isomorphic for $\alpha\not=\beta$.  Let $\pi_\alpha:=\tp^{A_\alpha}(a_\alpha)$.  If $\pi_\alpha$ is realized in $A_\beta$, then $\pi_\alpha$ is $e$-isolated, contradicting the fact that $\pi_\alpha\in [\varphi<\epsilon]$.  

\begin{rmk}
From the dichotomy theorem and Lemma \ref{enforceatomic}, we see that enforceable models are $e$-atomic.  In particular, $C(\P)$ is an $e$-atomic model of the theory of unital, projectionless, abelian \cstar-algebras.  By Proposition \ref{atomicembeds}, it follows that $C(\P)$ embeds into $C(X)$ whenever $C(X)$ is e.c.  This is a special case of the result of Bellamy used in the proof that $C(\P)$ is prime, namely that any hereditarily indecomposable continuum surjects onto $\P$.  It would be interesting to see if some further elaborations of the ideas used in this paper could be used to give a model-theoretic proof of Bellamy's result.
\end{rmk}
%


\begin{thebibliography}{99}

\bibitem{bankston3} P. Bankston, \emph{The Chang-\L o\'s-Suszko theorem in a topological setting}, Archive for Mathematical Logic \textbf{45} (2006), 97-112.
\bibitem{handbook} J. Barwise (editor), \emph{The handbook of mathematical logic}, Studies in Logic and the Foundations of Mathematics (Book 90), North Holland, 1989.
\bibitem{bellamy} D. Bellamy, \textit{Mapping hereditarily indecomposable continua onto a pseudo-arc}, Topology Conference (Virginia Polytech. Inst. and State Univ., Blacksburg, Va., 1973), 6-14, Lecture Notes in Math \textbf{375}, Springer, Berlin, 1974.
\bibitem{topo} I. Ben Yaacov, \emph{Topometric spaces and perturbations of metric structures}, Logic and Analysis \textbf{1} (2008), 235-272.
\bibitem{bbhu} I. Ben Yaacov, A. Berenstein, C. W. Henson, and A. Usvyatsov, \textit{Model theory for metric structures}, Model theory with applications to algebra and analysis. \textbf{2}, pgs. 315-427, London Math. Soc. Lecture Note Ser. (350), Cambridge Univ. Press, Cambridge, 2008.
\bibitem{BI} I. Ben Yaacov and J. Iovino, \emph{Model theoretic forcing in analysis}, Annals of Pure and Applied Logic \textbf{158} (2009), 163-174.
\bibitem{AF} K. Carlson, E. Cheung, A. Gerhardt-Bourke, I. Farah, B. Hart, L. Mezuman, N. Sequeira, and A. Sherman, \textit{Omitting types and AF algebras}, Archive for Math. Logic \textbf{53} (2014), 157-169.
\bibitem{EGV} C. Eagle, I. Goldbring, and A. Vignati, \emph{The pseudoarc is a co-existentially closed continuum}, Topology and its applications \textbf{207} (2016), 1-9.
\bibitem{farah} I. Farah, \textit{Absoluteness, truth, and quotients}, In Proceedings of the IMS Workshop on Infinity and Truth, C.T. Chong et al, editors, World Scientific, 1-24. 
\bibitem{FGHS} I. Farah, I. Goldbring, B. Hart, and D. Sherman, \emph{Existentially closed II$_1$-factors}, Fundamenta Mathematicae \textbf{233} (2016), 173-196. 
\bibitem{munster} I. Farah, B. Hart, M. Lupini, L. Robert, A.P. Tikuisis, A. Vignati, and W. Winter, \emph{Model theory of C$^*$-algebras}, arXiv 1602.08072.
\bibitem{ssa} I. Farah, B. Hart, A. Tikuisis, and M. Rordam, \textit{Relative commutants of strongly self-absorbing \cstar-algebras}, Selecta Math. \textbf{23} (2017) 363-387.
\bibitem{FHS2} I. Farah, B. Hart, and D. Sherman, \emph{Model theory of operator algebras II:  Model theory}, Israel J. Math. \textbf{201} (2014) 477-505.
\bibitem{FHS3} I. Farah, B. Hart, and D. Sherman, \emph{Model theory of operator algebras III:  Elementary equivalence and II$_1$ factors}, Bull. London Math. Soc. \textbf{46} (2014) 1-20.
\bibitem{farahmagidor} I. Farah and M. Magidor, \emph{Omitting types in the logic of metric structures}, submitted.
\bibitem{GHS} I. Goldbring, B. Hart, and T. Sinclair, \emph{The theory of tracial von Neumann algebras does not have a model companion}, Journal of Symbolic Logic \textbf{78} (2013), 1000-1004.
\bibitem{GL} I. Goldbring and M. Lupini, \emph{Model theoretic properties of the Gurarij operator system}, submitted.
\bibitem{KEP} I. Goldbring and T. Sinclair, \emph{On Kirchberg's Embedding Problem}, Journal of Functional Analysis \textbf{269} (2015) 155-198.
\bibitem{FAE} I. Goldbring and T. Sinclair, \emph{Omitting types in operator systems}, Indiana Univ. Math. J. \textbf{66} (2017), 821-844.
\bibitem{Robinson} I. Goldbring and T. Sinclair, \emph{Robinson forcing and the quasidiagonality problem}, International Journal of Mathematics \textbf{28} (2017), Article 1750008.
\bibitem{haag} U. Haagerup, \textit{Quasitraces on exact \cstar-algebras are traces}, C. R. Math. Acad. Sci. Soc. R. Can. \textbf{36} (2014), 67-92.
\bibitem{bhart} B. Hart, Continuous model theory course notes, available at \texttt{http://ms.mcmaster.ca/~bradd/courses/math712/index.html}.
\bibitem{hart} K. P. Hart, \emph{There is no categorical metric continuum}, Aportaciones Matematicas, Investigacion \textbf{19} (2007), 39-43.
\bibitem{hodges} W. Hodges, \emph{Building models by games}, London Mathematical Society Student Texts \textbf{2}, Cambridge University Press, Cambridge, 1985.
\bibitem{jungepisier} M. Junge and G. Pisier, \emph{Bilinear forms on exact operator spaces and $\B(H)\otimes \B(H)$}, Geom. Funct. Anal. (GAFA) \textbf{5} (1995), 329-363.
\bibitem{lupini} M. Lupini, \textit{Uniqueness, univesality, and homogeneity of the noncommutative Gurarij space}, Advances in Mathematics \textbf{298} (2016), 286-324.
\bibitem{TWW} A. Tikuisis, S. White, and W. Winter, \textit{Quasidiagonality of nuclear C$^*$-algebras}, Ann. of Math. \textbf{185} (2017):229-284.
\bibitem{TW}  A.S. Toms and W. Winter, \textit{Strongly self-absorbing C$^*$-algebras}, Trans. Amer. Math. Soc. \textbf{359} (2007), 3999-4029.


%

%

%

%
%
%
%
%
%
%
%
%
\end{thebibliography}
\end{document}